

\documentclass[11pt,twoside]{amsart}

\usepackage{latexsym}
\usepackage{amssymb}
\usepackage{vmargin}
\usepackage{amscd}
\usepackage{stmaryrd}
\usepackage{mathrsfs} 
\usepackage[all]{xy}


\usepackage{xr}

\usepackage{hyperref}
\hypersetup{colorlinks,
linkcolor=black,
citecolor=black,
urlcolor=blue}

\setmargins{32mm}{20mm}{14.6cm}{22cm}{1cm}{1cm}{1cm}{1cm}

 \setlength{\unitlength}{0.8cm}

\externaldocument[poisson-]{poisson}
\externaldocument[DQvanish-]{DQvanish}
\externaldocument[DQnonneg-]{DQnonneg}
\externaldocument[ddt1-]{ddt1hyperref}
\externaldocument[stacks2-]{stacks2}
\externaldocument[DQDG-]{DQDG}
\externaldocument[smallet-]{smallet}
\newcommand\da{\!\downarrow\!}

\newcommand\la{\leftarrow}
\newcommand\lra{\longrightarrow}

\newcommand\id{\mathrm{id}}

\newcommand\ten{\otimes}

\newcommand\vareps{\varepsilon}
\newcommand\eps{\epsilon}

\newcommand\CCC{\mathrm{CC}}
\newcommand\GT{\mathrm{GT}}
\newcommand\Levi{\mathrm{Levi}}

\renewcommand\H{\mathrm{H}}
\newcommand\z{\mathrm{Z}}
\renewcommand\b{\mathrm{B}}

\newcommand\HH{\mathrm{HH}}

\newcommand\Z{\mathbb{Z}}
\newcommand\Q{\mathbb{Q}}

\newcommand\R{\mathbb{R}}
\newcommand\Cx{\mathbb{C}}

\newcommand\vv{\mathbb{V}}

\newcommand\bA{\mathbb{A}}
\newcommand\bB{\mathbb{B}}

\newcommand\bD{\mathbb{D}}

\newcommand\bG{\mathbb{G}}
\newcommand\bH{\mathbb{H}}

\newcommand\bL{\mathbb{L}}

\newcommand\C{\mathcal{C}}

\newcommand\cD{\mathcal{D}}

\newcommand\cF{\mathcal{F}}

\newcommand\cI{\mathcal{I}}

\newcommand\cP{\mathcal{P}}

\newcommand\cR{\mathcal{R}}
\newcommand\cS{\mathcal{S}}

\renewcommand\O{\mathscr{O}}

\newcommand\sA{\mathscr{A}}

\newcommand\sD{\mathscr{D}}
\newcommand\sE{\mathscr{E}}
\newcommand\sF{\mathscr{F}}
\newcommand\sG{\mathscr{G}}

\newcommand\sL{\mathscr{L}}
\newcommand\sM{\mathscr{M}}
\newcommand\sN{\mathscr{N}}
\newcommand\sO{\mathscr{O}}

\newcommand\fX{\mathfrak{X}}

\renewcommand\L{\Lambda}

\newcommand\g{\mathfrak{g}}

\newcommand\ff{\mathfrak{f}}

\newcommand\fr{\mathfrak{r}}

\renewcommand\hom{\mathscr{H}\!\mathit{om}}
\newcommand\cHom{\mathcal{H}\!\mathit{om}}
\newcommand\cDiff{\mathcal{D}\!\mathit{iff}}
\newcommand\cDer{\mathcal{D}\!\mathit{er}}

\newcommand\CAlg{\mathrm{CAlg}}

\newcommand\Ass{\mathrm{Ass}}
\newcommand\Br{\mathrm{Br}}

\newcommand\Mod{\mathrm{Mod}}

\newcommand\Hom{\mathrm{Hom}}

\newcommand\map{\mathrm{map}}

\newcommand\HHom{\underline{\mathrm{Hom}}}

\newcommand\DDer{\underline{\mathrm{Der}}}

\newcommand\End{\mathrm{End}}

\newcommand\Diff{\mathrm{Diff}}
\newcommand\Iso{\mathrm{Iso}}

\newcommand\cone{\mathrm{cone}}
\newcommand\cocone{\mathrm{cocone}}
\newcommand\dg{\mathrm{dg}}
\newcommand\per{\mathrm{per}}

\newcommand{\dbar}{{\,\mathchar'26\mkern-12mu d}}
\newcommand{\brh}{\llbracket \hbar \rrbracket}
\newcommand{\brhh}{\llbracket \hbar^2 \rrbracket}

\newcommand\At{\mathrm{At}}

\newcommand\Co{\mathrm{Co}}
\newcommand\CoS{\mathrm{CoS}}

\newcommand\Spec{\mathrm{Spec}\,}

\newcommand\Set{\mathrm{Set}}

\newcommand\Aff{\mathrm{Aff}}

\newcommand\Sp{\mathrm{Sp}}

\newcommand\Pol{\mathrm{Pol}}

\newcommand\Lag{\mathrm{Lag}}

\newcommand\Com{\mathrm{Com}}
\newcommand\Comp{\mathrm{Comp}}

\newcommand\nondeg{\mathrm{nondeg}}

\newcommand\ad{\mathrm{ad}}

\newcommand\<{\langle}
\renewcommand\>{\rangle}
\newcommand\Lim{\varprojlim}
\newcommand\LLim{\varinjlim}
\DeclareMathOperator*{\holim}{holim}
\newcommand\ho{\mathrm{ho}\!}

\newcommand\xra{\xrightarrow}
\newcommand\xla{\xleftarrow}

\newcommand\pr{\mathrm{pr}}

\newcommand\bt{\bullet}
\newcommand\by{\times}

\newcommand\mc{\mathrm{MC}}
\newcommand\mmc{\underline{\mathrm{MC}}}

\newcommand\Symm{\mathrm{Symm}}

\newcommand\et{\acute{\mathrm{e}}\mathrm{t}}

\newcommand\an{\mathrm{an}}

\newcommand\Tot{\mathrm{Tot}\,}

\newcommand\pd{\partial}

\newcommand\half{\frac{1}{2}}



\newcommand\gr{\mathrm{gr}}

\newcommand\Fil{\mathrm{Fil}}

\newcommand\Lie{\mathrm{Lie}}

\newcommand\DR{\mathrm{DR}}

\newcommand\op{\mathrm{opp}}

\newcommand\co{\colon\thinspace}

\newcommand\oR{\mathbf{R}}

\newcommand\oL{\mathbf{L}}

\newcommand\uleft\underleftarrow
\newcommand\uline\underline
\newcommand\uright\underrightarrow

\newtheorem{theorem}{Theorem}[section]
\newtheorem{proposition}[theorem]{Proposition}
\newtheorem{corollary}[theorem]{Corollary}
\newtheorem{conjecture}[theorem]{Conjecture}
\newtheorem{lemma}[theorem]{Lemma}
\newtheorem*{theorem*}{Theorem}
\newtheorem*{proposition*}{Proposition}
\newtheorem*{corollary*}{Corollary}
\newtheorem*{lemma*}{Lemma}
\newtheorem*{conjecture*}{Conjecture}

\theoremstyle{definition}
\newtheorem{definition}[theorem]{Definition}
\newtheorem*{definition*}{Definition}

\newtheorem*{notation*}{Notation}

\theoremstyle{remark}
\newtheorem{example}[theorem]{Example}
\newtheorem{examples}[theorem]{Examples}
\newtheorem{remark}[theorem]{Remark}
\newtheorem{remarks}[theorem]{Remarks}

\newtheorem{assumption}[theorem]{Assumption}

\newtheorem*{example*}{Example}
\newtheorem*{examples*}{Examples}
\newtheorem*{remark*}{Remark}
\newtheorem*{remarks*}{Remarks}
\newtheorem*{exercise*}{Exercise}
\newtheorem*{property*}{Property}
\newtheorem*{properties*}{Properties}

\sloppy
\begin{document}

\begin{abstract}
We investigate quantisations of line bundles $\sL$ on derived Lagrangians $X$ over $0$-shifted symplectic  derived Artin $N$-stacks $Y$. In our derived setting, a deformation quantisation  consists of  a curved $A_{\infty}$ deformation of the structure sheaf $\sO_{Y}$, equipped with a curved  $A_{\infty}$ morphism to the ring of   differential operators on $\sL$; for line bundles on smooth Lagrangian subvarieties of smooth symplectic algebraic varieties, this simplifies to deforming  $(\sL, \sO_{Y})$ to a DQ module over a DQ algebroid. 

For each choice of formality isomorphism between the $E_2$ and $P_2$ operads,  we construct a map from the space of non-degenerate quantisations to power series with coefficients in relative cohomology groups of the respective  de Rham complexes.
 When $\sL$ is a square root of the dualising line bundle, this leads to an equivalence between even power series and  certain anti-involutive  quantisations, ensuring that the deformation quantisations always exist for such line bundles. 
This gives rise to a dg category of algebraic Lagrangians, 
an algebraic Fukaya category 
of the form envisaged by Behrend and Fantechi.
We also sketch a  generalisation of these quantisation results to Lagrangians on  higher $n$-shifted symplectic derived stacks.
\end{abstract}

\title{Quantisation of derived Lagrangians}
\author{J.P.Pridham}
\thanks{This work was supported by  the Engineering and Physical Sciences Research Council [grant number EP/I004130/2].}

\maketitle

\section*{Introduction}

A major source of motivation for the study of shifted symplectic and Poisson structures in derived geometry is the desire to develop and understand quantisations. For  $n >0$, existence of quantisations of $n$-shifted Poisson structures is automatic, following from the formality equivalence $E_{n+1}\simeq P_{n+1}$ of operads. Quantisations of positively shifted symplectic structures thus follow immediately from the equivalence in \cite{poisson, CPTVV} between symplectic and non-degenerate Poisson structures. For lower values of $n$, quantisation is a much harder problem to tackle, or even formulate,  but \cite{DQnonneg, DQvanish} established the existence of quantisations for $0$-shifted and $(-1)$-shifted symplectic structures on derived Artin $N$-stacks. 

A deformation quantisation of a symplectic structure on a geometric object $Y$
is a non-commutative deformation, parametrised by power series in $\hbar$, of the
functions $\sO_Y$ on $Y$, such that the classical limit $\hbar \to 0$ recovers the Poisson
bracket associated to the symplectic structure.
Classically, this means looking at associative deformations $\star_{\hbar}$ of the
multiplication on $\sO_Y$, with the Poisson bracket then given by $\{a,b\}:=\lim_{\hbar \to 0} \frac{a\star_{\hbar}  b -b \star_{\hbar}a}{\hbar}$. In derived geometry, $\sO_Y$ is homologically enriched and
symplectic structures can be shifted in the sense that they have non-zero
homological degree, so their quantisations have to be formulated in terms of
more exotic algebraic structures. A deformation quantisation of a Lagrangian
structure $X \to Y$ then involves a deformation of $\sO_Y$ acting on a deformation of
$\sO_X$ in a suitable sense, such that the classical limit recovers the Lagrangian
structure.

The purpose of this paper is simultaneously to generalise the results of \cite{DQnonneg, DQvanish} by formulating and studying quantisations of Lagrangian morphisms  $(X, \lambda) \to (Y,\omega)$ over $0$-shifted symplectic derived stacks $(Y, \omega)$, in the sense of \cite{PTVV}; then \cite{DQnonneg} corresponds to the case where $X$ is empty, and \cite{DQvanish} to the case where $Y$ is a point, forcing the the Lagrangian structure on $X$ to be $(-1)$-shifted symplectic. 

Based on the principle that $n$-shifted quantisations broadly correspond to $E_{n+1}$-algebras, a deformation quantisation of an $n$-shifted Lagrangian structure on $(X \to Y)$ should roughly consist of an $E_{n+1}$-algebra deformation $\tilde{\sO}_{Y}$ of the structure sheaf $\sO_Y$, together with an $\sO_{Y}$-module $\tilde{\sO}_{X}$ in $E_n$-algebras deforming the structure sheaf $\sO_X$.
In the $0$-shifted setting, this would mean seeking  an associative deformation $\tilde{\sO}_{Y}$ of the structure sheaf $\sO_Y$, together with an $\tilde{\sO}_{Y}$-module deformation $\tilde{\sO}_X$ of the structure sheaf $\sO_X$. More precisely, the $E_k$-algebra deformations should  be $BD_k$-algebras; in the $0$-shifted setting, this means that $ \tilde{\sO}_{Y}$ satisfies an almost commutativity condition while  the deformation $\tilde{\sO}_X$  is given by differential operators with orders constrained as in \cite{kravchenko}.

The notion of deformation quantisation can be weakened by allowing curvature, and also generalised by deforming $B^{k+1}\bG_m$-torsors in place of the structure sheaf. For  $0$-shifted Lagrangians, both become necessary to ensure the existence of algebraic quantisations. Even on (underived) non-singular varieties,  curvature in quantisations  manifests itself in the  form of DQ algebroids and DQ modules as in \cite{kontsevichDQAlgVar,DAgnoloSchapira}. On a Lagrangian $X$, we look to quantise line bundles in place of the structure sheaf $\sO_X$, and as in the extreme case of  $(-1)$-shifted symplectic structures  considered in \cite{DQvanish}, it is line bundles with a self-duality property (sometimes known as orientation data or spin structures) for which the existence of quantisations is guaranteed.
For our precise formulations of quantised co-isotropic structures, see Definition \ref{QPdef} and Remark \ref{curvedrmk} for local descriptions, Examples \ref{curvedex} for special cases, and 
Definition \ref{QPdefglobal} and Remark \ref{algdrmk}  globally.


Our main result is Theorem \ref{quantpropsd}, which implies that deformation quantisations exist for any $0$-shifted Lagrangian morphism $X \to Y$ of derived Artin stacks, and any line bundle  $\sL$ which is a square root of the dualising complex $K_{X}$. 
The quantisations thus constructed have a self-duality property, generalising the property $b\star_{\hbar}a =a\star_{-\hbar}b$  often satisfied by star products.   
In fact, Theorem \ref{quantpropsd} gives a complete classification of self-dual quantisations of a given Lagrangian structure, parametrising them in terms of de Rham cohomology as a torsor for the group
$
 \hbar^2 \H^1(\cone(\DR(Y) \to \DR(X)))\brhh
$,
---  when working over $\Cx$, this  is just the relative cohomology group 
\[
 \hbar^2\H^1(Y(\Cx),X(\Cx);\Cx\brhh)
\]
 of the associated topological spaces $X(\Cx), Y(\Cx)$ (or to be precise, simplicial spaces when $X,Y$ are stacks), with coefficients in $\Cx\brhh$. 
   
  When $Y$ is a smooth variety, and $X$ a smooth Lagrangian subvariety, this theorem and Proposition \ref{prop3} recover the classification in \cite{BaranovskyGinzburgKaledinPecharich} of quantisations of the pair $(Y,X)$, as explained in Remark \ref{cfBGKP}, but our
derived Lagrangians $X$ can also  be derived enhancements of singular schemes or stacks even when $Y$ is a smooth variety.

Much study of derived quantisation questions has been motivated by the desire to associate  a dg category to $0$-shifted symplectic derived schemes $(Y, \omega)$ in algebraic and complex analytic settings with similar properties to the Fukaya category, as outlined in \cite{BehrendFantechiIntersections, joyceFukaya, BBDJS}. The driving philosophy is that the derived intersection (or rather, homotopy fibre product) of $0$-shifted Lagrangians is a $(-1)$-shifted symplectic derived stack, so carries a perverse sheaf of vanishing cycles, and that there should be a dg category whose $\Hom$-complexes are given by appropriate shifts of derived global sections of these sheaves. The motivation  is explained in some detail in \cite[Remark 6.15]{BBDJS}, with vanishing cycles resembling an analogue of Lagrangian Floer cohomology. However, there are serious difficulties in trying to upgrade these complexes to a dg category. Our quantisation results allow us to solve this problem in 
\S \ref{fukayasn}
by attacking it from the opposite direction. Fixing a suitable quantisation $\tilde{\sO}_Y$ of $(Y, \omega)$, Theorem \ref{quantpropsd} guarantees that compatible quantisations $( \tilde{\sO}_Y,\tilde{\sL})$ exist for oriented Lagrangians $(X,\sL)$ over $Y$, leading to a natural dg category of the associated $\tilde{\sO}_Y$-modules in Definition \ref{fukayadef}. We then show in Corollary \ref{cfvanish2} that the  $\Hom$-complexes  indeed come from vanishing cycles after inverting $\hbar$.

Our approach to proving Theorem \ref{quantpropsd} will be familiar from \cite{poisson,DQvanish, DQnonneg}. For each quantisation $\Delta$, we define a map $\mu$ from generalised Lagrangian structures, defined in terms of power series in de Rham cohomology $\cone(\DR(Y) \to \DR(X))$, to a quantised form of relative Poisson cohomology, giving a filtered quasi-isomorphism when $\Delta$ is non-degenerate. To each non-degenerate quantised co-isotropic structure $\Delta$, there is an associated element $\hbar^2 \frac{\pd \Delta}{\pd \hbar}$, and hence a power series $\mu^{-1}(\hbar^2 \frac{\pd \Delta}{\pd \hbar})$ whose constant term is a Lagrangian structure.  Obstruction calculus shows that this induces an equivalence between self-dual quantisations and even power series.  

Our main new technical ingredient in this paper is in defining the map $\mu$, where we consider the natural morphism
$
 \CCC^{\bt}(\sO_Y) \to \CCC^{\bt}(\sD_{X/Y}(\sL))
$
of $E_2$-algebras  induced by the action of the Hochschild complex $\CCC^{\bt}(\sO_Y)$  on the ring of differential operators $\sD_{X/Y}(\sL)$. Via formality, we may regard these $E_2$-algebras as $P_2$-algebras, and then each quantisation $(\tilde{\sO}_Y,\tilde{\sL})$ defines a commutative diagram  of the form
\[
 \begin{CD}
 \DR(Y) @>>> \DR(X)\\
 @VVV      @VVV\\
  \CCC^{\bt}(\tilde{\sO}_Y) @>>> \CCC^{\bt}(\sD_{X/Y}(\tilde{\sL}))
 \end{CD}
\] 
from the de Rham complexes, with the left-hand side recovering the compatibility map from \cite{DQnonneg}. The morphism $\mu(-,\Delta)$ is then given by composing with the natural map $\CCC^{\bt}(\sD_{X/Y}(\tilde{\sL})) \to \sD_{X/Y}(\tilde{\sL})$ and taking cones to give a map from de Rham cohomology to a form of quantised relative Poisson cohomology.

The structure of the paper is as follows. 

In Section \ref{centresn}, we establish some technical background results on Hochschild complexes. Under the well-known principle (see for instance \cite{GM,Kon,Man,ddt1}) that deformation problems in characteristic $0$ are governed by differential graded Lie algebras (DGLAs), the DGLAs in \cite{DQnonneg,DQvanish} governing $0$-shifted and $(-1)$-shifted quantisations were constructed from Hochschild complexes and rings of differential operators, respectively. Our perspective for quantisations of $0$-shifted co-isotropic structures on a morphism $X \to Y$ is that the governing  DGLA comes from Hochschild complex $\CCC^{\bt}(\sO_Y)$  acting on $\sD_{X/Y}(\sL)$ via the quasi-isomorphism $\sD_{X/Y} \to \CCC^{\bt}(\sO_Y, \sD_X)$. 

When equipped with a PBW filtration degenerating to Poisson cohomology, we first show that Hochschild complexes  of almost commutative algebras become  almost commutative brace algebras in a suitable sense (\S \ref{bracesn}). This allows us to construct suitable semidirect products of Hochschild complexes from morphisms of almost commutative algebras in \S \ref{semidirectsn}.   Section \ref{affinesn} then applies these constructions to Hochschild complexes acting on rings of differential operators, allowing us to construct a form of quantised relative Poisson cohomology (Definition \ref{TQpoldef0}),  leading to a space $Q\cP(A,B;0)$ of quantisations associated to a morphism $A \to B$ of commutative bidifferential bigraded algebras (i.e. a map $\Spec B \to \Spec A$ of stacky derived affines in the sense of \cite{poisson}), and more generally a space  $Q\cP(A,M;0)$ for each line bundle $M$ over $B$ (Definition \ref{QPdef}).

Section \ref{compatsn} contains the key technical construction  (Definition \ref{mudef}) of the compatibility map $\mu$ between generalised Lagrangians and quantised co-isotropic structures. The main results of this section are Proposition \ref{QcompatP1}, giving a map from non-degenerate quantisations to generalised Lagrangians, and Proposition \ref{compatcor2}, which gives an equivalence between Lagrangians and non-degenerate co-isotropic structures. Proposition \ref{quantprop} then shows that the obstruction to quantising a co-isotropic structure is first order.

In Section \ref{stacksn}, these constructions are globalised via the method introduced in \cite{poisson}. \S \ref{sdsn} then introduces the notion of self-duality, enabling us to eliminate the first order obstruction and thus lead to Theorem \ref{quantpropsd}, the main comparison result. In \S \ref{higherrmk}, we then explain how the methods and results of the paper should adapt to Lagrangians on positively shifted symplectic stacks. 

Section \ref{fukayasn} describes algebraic (and complex analytic) analogues of the Fukaya category based on self-dual quantisations of line bundles on derived Lagrangians, and establishes a few key properties. The main definition is given in Definition \ref{fukayadef} and the relation with vanishing cycles in Proposition \ref{cfvanish1} and Corollary \ref{cfvanish2}, with Proposition \ref{Lagcorrprop} establishing functoriality with respect to Lagrangian correspondences. Many of these structural results rely on additivity properties established in \S \ref{QIntHom} investigating the interaction of quantisation with intersection and $\Hom$, which may be of independent interest.  

I would like to thank the anonymous referee for helpful comments.

\tableofcontents

\subsubsection*{Notation and terminology}

Throughout the paper, we will usually denote chain differentials by $\delta$. The graded vector space underlying a chain (resp. cochain) complex $V$ is denoted by $V_{\#}$ (resp. $V^{\#}$). Since we often have to work with chain and cochain structures separately, we denote shifts as subscripts and superscripts, respectively, so $(V_{[i]})_n:= V_{i+n}$ and $(V^{[i]})^n:= V^{i+n}$.

Given an associative algebra $A$ in chain complexes, and  $A$-modules $M,N$ in chain complexes, we write $\HHom_A(M,N)$ for the cochain complex given by
\[
 \HHom_A(M,N)^i= \Hom_{A_{\#}}(M_{\#[i]},N_{\#}),
\]
with differential $ f\mapsto \delta_N \circ f \pm f \circ \delta_M$.

We refer to associative algebras in chain complexes as DGAAs (i.e. differential graded associative algebra), and commutative algebras in chain complexes as CDGAs (i.e. commutative differential graded algebras); these are assumed unital unless stated otherwise.
 We will also refer to coassociative coalgebras in chain  complexes over a CDGA $R$ as DGACs over $R$; these are co-unital  unless stated otherwise. 
From Section \ref{affinesn} onwards, we will be working with double complexes $V^{\bt}_{\bt}$ combining both chain and cochain gradings, where the chain and cochain differentials are denoted by $\delta$ and $\pd$ respectively. We refer to unital commutative (resp. associative) algebras in double complexes as stacky CDGAs (resp. stacky DGAAs), regarding the cochain differential $\pd$ as stacky structure and the chain differential $\delta$ as derived structure.

\begin{definition}
 Given a chain cochain complex $V$, define the cochain complex $\hat{\Tot} V \subset \Tot^{\Pi}V$ as a subset of the product total complex by
\[
(\hat{\Tot} V)^m := (\bigoplus_{i < 0} V^i_{i-m}) \oplus (\prod_{i\ge 0}   V^i_{i-m})
\]
with differential $\pd \pm \delta$. This is sometimes referred to as the Tate realisation.
\end{definition}

Here and elsewhere, we use the symbol $\pm$ to denote the  sign in the total complex of a double complex, or in induced constructions such as tensor powers of, and monomial operations on, chain complexes, noting that internal tensor products are total complexes of external tensor products.  The sign is determined by the property that $\pm$ takes the value $+$ when all inputs have degree $0$.   The symbol $\mp$ then denotes the opposite sign. 

\begin{definition}
 Given a stacky DGAA $A$ and $A$-modules $M,N$ in chain cochain complexes, we define  internal $\Hom$s
$\cHom_A(M,N)$  by
\[
 \cHom_A(M,N)^i_j=  \Hom_{A^{\#}_{\#}}(M^{\#}_{\#},N^{\#[i]}_{\#[j]}),
\]
with differentials  $\pd f:= \pd_N \circ f \pm f \circ \pd_M$ and  $\delta f:= \delta_N \circ f \pm f \circ \delta_M$,
where $V^{\#}_{\#}$ denotes the bigraded vector space underlying a chain cochain complex $V$. 

We then define the  $\Hom$ complex $\hat{\HHom}_A(M,N)$ by
\[
 \hat{\HHom}_A(M,N):= \hat{\Tot} \cHom_A(M,N).
\]
\end{definition}
Note that  there is a multiplication $\hat{\HHom}_A(M,N)\ten \hat{\HHom}_A(N,P)\to \hat{\HHom}_A(M,P)$; beware that the same is not true for the product total complexes $\Tot^{\Pi} \cHom_A(M,N)$ in general.

When we need to compare chain and cochain complexes, we  make use of the equivalence  $u$ from chain complexes to cochain complexes given by $(uV)^i := V_{-i}$, and refer to this as rewriting the chain complex as a cochain complex (or vice versa). On suspensions, this has the effect that $u(V_{[n]}) = (uV)^{[-n]}$.

We will denote symmetric and cosymmetric powers by $S^p_B(M)=\Symm^p_B(M):= (M^{\ten_B p})_{\Sigma_p} $ and $\CoS_B^p(M) =\Co\Symm^p_B(M):= (M^{\ten_B p})^{\Sigma_p}$, respectively given by co-invariants and invariants of the symmetric group action. We also write $\Symm_B(M) = \bigoplus_{p \ge 0}S_B^p(M)$ and $\Co\Symm_B(M) = \bigoplus_{p \ge 0}\CoS_B^p(M)$.

\section{The centre of an almost commutative algebra}\label{centresn}

The purpose of this section is to show that the Hochschild complex of an almost commutative algebra is almost commutative as a brace algebra, and to study the resulting  almost commutative brace algebra constructions.
The primary motivation is to ensure that formality equivalences $E_2 \simeq P_2$ then turn these Hochschild complexes into filtered $P_2$-algebras (i.e. Gerstenhaber algebras) for which the Lie bracket has weight $-1$.

\subsection{Almost commutative algebras}

\subsubsection{Homological algebra of complete filtrations}\label{filtrnsn}

We now introduce a formalism for working with complete filtered complexes. Although we make little explicit use of these characterisations in the rest of the paper, they  implicitly feature in the reasoning for  complete filtered functors  to have given properties.

\begin{definition}\label{xidef}
 Given a vector space $V$ with a decreasing filtration $F$, the  Rees module $\xi(V,F)$ is given by 
$\xi(V,F):= \bigoplus_p F^pV \hbar^{-p} \subset V[\hbar, \hbar^{-1}]$. This has the structure of a $\bG_m$-equivariant (i.e. graded) $\Z[\hbar]$-module, setting $\hbar$ to be of weight $-1$ for the $\bG_m$-action.
\end{definition}

 The functor $\xi$ gives an equivalence between exhaustively  filtered vector spaces and flat $\bG_m$-equivariant $\Z[\hbar]$-modules --- see \cite[Lemma 2.1]{mhs2} for instance. 
 
We will be interested in filtrations which are complete, in the sense that $V = \Lim_i V/F^i$. Via the Rees constructions, this amounts to looking at the inverse limit over $k$ of the categories of $\bG_m$-equivariant $\Z[\hbar]/\hbar^k$-modules. However, Koszul duality provides a much more efficient characterisation, as modules over the  Koszul dual  $\Z[\dbar]\simeq \oR\HHom_{\Z[\hbar]}(\Z,\Z)$ of $\Z[ \hbar]$, as follows.

\begin{definition}
Define the $\bG_m$-equivariant dg algebra $\Z[\dbar]$ by letting $\dbar$ be a formal variable of chain degree $-1$, satisfying $\dbar^2=0$, and having weight $1$ with respect to the $\bG_m$-action. We say that a morphism of   $\Z[\dbar]$-modules in graded chain complexes is a weak equivalence if it is a quasi-isomorphisms of the underlying chain complexes, forgetting $\dbar$.
\end{definition}

\begin{definition}
 For a filtered chain complex $(V,F)$, the corresponding $\bG_m$-equivariant  $\Z[\dbar]$-module $\g\fr_FV$ is  given in weight $i$ by
\[
 \g\fr^i_FV:= \cone(F^{i+1}V \to F^iV),
\]
with $\dbar \co \g\fr_F^iV \to \g\fr^{i+1}_FV_{[-1]}$ given by the identity on $F^{i+1}V$ (and necessarily $0$ elsewhere).
\end{definition}
There is an obvious quasi-isomorphism from $\g\fr_FV$ to the associated graded $\gr_FV$, but the latter does not have a natural $\dbar$-action.

 There is a homotopy inverse functor  to $\g\fr$ 
 which 
can be realised 
explicitly as follows:
\begin{definition}
 Given a $\Z[\dbar]$-module $E$ in $\bG_m$-equivariant chain complexes, define the chain complex $\ff(E)$ to be the semi-infinite total complex
\[
\ff(E):= (\bigoplus_{i<0} E(i) \oplus \prod_{i \ge 0} E(i), \delta \pm \dbar),
\]
equipped with the complete exhaustive filtration
\[
 F^p \ff(E):= (\prod_{i \ge p} E(i), \delta \pm \dbar). 
\]
\end{definition}
This clearly maps weak equivalences to filtered quasi-isomorphisms. 

In summary:
\begin{lemma}
 The functors $\g\fr$ and $\ff$ define an equivalence between the relative category of $\Z[\dbar]$-modules in  $\bG_m$-equivariant chain complexes and the relative category of complete exhaustively filtered chain complexes and filtered quasi-isomorphisms.
\end{lemma}
\begin{proof}
  Given a $\Z[\dbar]$-module $E$ in $\bG_m$-equivariant chain complexes, we have
  \[
  \g\fr^p_F\ff(E) = (\cone(\prod_{i \ge p+1} E(i) \to \prod_{i \ge p} E(i)) , \delta \pm \dbar),
  \]
with $\dbar \co \g\fr_F^p\ff(E)  \to \g\fr^{p+1}_F\ff(E)_{[-1]}$ the identity on $\prod_{i \ge p+1} E(i)$. The canonical $\bG_m$-equivariant quasi-isomorphism $E \to \g\fr_F\ff(E)$ of $\Z[\dbar]$-modules  is then given by $e \mapsto (\pm\dbar e,e)$ in each weight.

Conversely, given a complete exhaustively filtered chain complex $(V,F)$, we have
\[
 F^p\ff(\g\fr_FV)=(\prod_{i \ge p} \cone(F^{i+1}V \to F^iV) , \delta \pm \dbar),
\]
where $\dbar$ is the identity on the respective copies of $F^{i+1}V$. The canonical filtered quasi-isomorphism $\{ F^p\ff(\g\fr_FV)\to F^pV\}_p$ is then given by summing the elements in the targets of the cones, the sum converging because the filtration is complete.  
\end{proof}

One way of thinking of the category of $\Z[\dbar]$-modules is that we are allowed to split the filtration on a filtered complex, but only at the expense of having a component $\dbar$ of the differential which does not respect the grading. The associated graded complex is then simply given by forgetting the action of $\dbar$. 

Another way of understanding this equivalence is to observe that a cofibrant resolution of $\Z[\dbar]$ as an associative algebra in chain complexes is given by the free algebra $\Z\<\dbar_1, \dbar_2, \ldots\>$ with $\dbar_m$ of chain degree $-1$ and weight $m$, with differential $\delta$ given by  $\delta\dbar_m =-  \sum_{i+j=m} \dbar_i\dbar_j$. Thus the structure of a $\Z\<\dbar_1, \dbar_2, \ldots\>$-module on a chain complex $E$ is the same as a differential $\delta + \sum \dbar_i$ on $\bigoplus_{i<0} E(i) \oplus \prod_{i \ge 0} E(i)$ respecting the filtration and agreeing with $\delta$ on the associated graded.

\begin{definition}
 Given a ring $k$, a linear algebraic group $G$ over $k$, and a $G$-equivariant  commutative algebra $R$ in chain complexes over $k$,   define the category $dg\Mod_G(R)$ to consist of $G$-equivariant $R$-modules in chain complexes.
\end{definition}

Thus the Rees construction $\xi(V,M)$ of a filtered $R$-module $M$ lies in $dg\Mod_{\bG_m}(R[\hbar])$, while $ \g\fr_FM \in dg\Mod_{\bG_m}(R[\dbar])$. When $G$ is linearly reductive, standard arguments show that there is a cofibrantly generated model structure on $dg\Mod_G(R)$ in which fibrations are surjections and weak equivalences are quasi-isomorphisms of the underlying chain complexes.

The dg algebra $R[\dbar]$ has the natural structure of  a dg Hopf $R$-algebra, by setting  $\dbar$ to be primitive, so the comultiplication $R[\dbar] \to R[\dbar]\ten_RR[\dbar]$ sends $\dbar$ to $\dbar \ten 1 +1\ten \dbar$.
\begin{definition}
 We define a closed symmetric monoidal structure $\ten_R$ on the category $dg\Mod_{\bG_m}(R[\dbar])$ by giving the chain complex $M\ten_RN$ an $R[\dbar]$-module structure via the comultiplication on the Hopf algebra $R[\dbar]$.

\end{definition}
With respect to this structure, the functors $\g\fr$ and $\ff$ are both lax monoidal. By way of comparison,
note that for the usual tensor product of filtered complexes over $k$, we have $\gr_F(U\ten_k V) = \gr_F(U)\ten_{k}\gr_F(V)$. 

\subsubsection{Koszul duality for almost commutative rings}

From now on, we fix a commutative algebra $R$ in chain complexes over $\Q$. 
We refer to associative algebras in chain complexes as DGAAs, and commutative algebras in chain complexes as CDGAs.
 We will also refer to  to coassociative coalgebras in chain  complexes over $R$ as DGACs over $R$.

\begin{definition}
 We say that a complete filtered  DGAA $(A,F)$ is almost commutative if $\gr_FA$ is a CDGA. Similarly, a  filtered DGAC $(C,F)$ is said to be almost cocommutative if the comultiplication on $\gr_FC$ is cocommutative.
\end{definition}

\begin{remark}
An  almost commutative DGAA $(A,F)$ can be regarded as an algebra in filtered complexes for the filtered operad given by the PBW filtration on the associative operad $\Ass$, which is given by powers of the augmentation ideal of $T(V) \to \Symm(V)$.
The Rees construction $\xi(A,F)$ is thus automatically an algebra for the $\bB\bD_1$-operad over $[\bA^1/\bG_m]$ as described in \cite[\S 3.5.1]{CPTVV} (or \cite[\S 2.4.2]{CostelloGwilliamVol2} for its completion $BD_1$, dropping $\bG_m$-equivariance). Explicitly, this means that $\xi(A,F)$ is a $\bG_m$-equivariant DGAA over $R[\hbar]$ equipped with a Lie bracket $[-,-]$ of $\bG_m$-weight $-1$ which is a biderivation and satisfies $\hbar[a,b]=ab\mp ba$. 

Since we only wish to consider complete filtrations, we are effectively studying algebras $\g\fr(A,F)$ over the operad $ \g\fr(BD_1)$ in $dg\Mod_{\bG_m}(\Q[\dbar])$, where we write $BD_1$ for the complete filtered operad associated to $\bB\bD_1$. 
\end{remark}

\begin{definition}\label{bardef}
 We write $\b$ for the bar construction from possibly non-unital DGAAs over $R$ to ind-conilpotent DGACs over $R$. Explicitly, this is given by taking the tensor coalgebra
\[
 \b A:= T(A_{[-1]})= \bigoplus_{i \ge 0} (A_{[-1]})^{\ten_R i}, 
\]
with chain differential given on cogenerators $A_{[-1]}$ by combining the chain differential and multiplication on $A$.
Write $\b_+ A$ for the subcomplex $T_+(A_{[-1]})=\bigoplus_{i > 0} A_{[-1]}^{\ten_R i}$.

Let $\Omega_+$ be the left adjoint to $\b_+$, given by the tensor algebra
\[
 \Omega_+ C: = \bigoplus_{j > 0} (C_{[1]})^{\ten_R i},
\]
 with chain differential given on generators $C_{[1]}$ by combining the chain differential and comultiplication on $C$. We then define $\Omega C:= R \oplus \Omega_+C$ by formally adding a unit.
\end{definition}

\begin{definition}\label{betadef}
 Given an almost commutative DGAA $(A,F)$, we define the filtration $\beta F$ on $\b A$ by convolution with the Poincar\'e--Birkhoff--Witt filtration $\beta$. Explicitly, there is a shuffle multiplication $\nabla$ on $(\b A)_{\#}$ given on cogenerators by the identity maps $(A\ten R) \oplus (R\ten A) \to A $, making $(\b A)_{\#}$ into a Hopf algebra. Writing $F$ as an increasing filtration,  we then set 
$\beta^j\b A$ to be the image of the $j$-fold shuffle product $(\b_+ A)^{\ten j}\to \b A$ (i.e. $b_1\ten \ldots \ten b_j \mapsto b_1\nabla b_2\nabla\ldots\nabla b_j$), and
\[
 (\beta F)_i\b A: = \sum_j   F_{i+j}\cap \beta^j\b A.
\]
\end{definition}

\begin{lemma}\label{betanice}
The filtration $\beta F$ makes $\b A$ into an almost cocommutative DGAC.
\end{lemma}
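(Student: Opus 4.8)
The plan is to unwind the definitions and check the two assertions separately: first that $\beta F$ is a complete, exhaustive filtration on $\b A$ for which $\b A$ is filtered (i.e. the comultiplication and differential respect $\beta F$), and second that the associated graded $\gr_{\beta F}\b A$ is a cocommutative DGAC. The heart of the matter is the interplay between the two filtrations in play: the ``length'' filtration $\beta^\bullet$ coming from the shuffle-decomposition $T(A_{[-1]}) = \bigoplus_j \nabla((\b_+A)^{\otimes j})$, and the PBW filtration $F$ on $A$ itself. I would first record that $(\b A)_\#$, with the shuffle product $\nabla$ and the deconcatenation coproduct, is a graded-commutative Hopf algebra (this is the classical shuffle Hopf algebra on $A_{[-1]}$), so that $\beta^j \b A$ is precisely the $j$-th term of the coradical-type filtration associated to the primitives $\b_+A$, and in particular $\nabla$ maps $\beta^{j}\otimes \beta^{j'}$ into $\beta^{j+j'}$ while the coproduct maps $\beta^j$ into $\sum_{a+b=j}\beta^a\otimes\beta^b$. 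Combined with the fact that $F$ on $A$ is multiplicative for the associative product and the standard estimate that the bar differential on a cogenerator lands, after the PBW estimate $F_i\cdot F_j\subseteq F_{i+j}$ on $A$, one weight lower in $\beta$ but appropriately controlled in $F$, one gets that $\delta$ and $\Delta$ preserve $(\beta F)_i\b A := \sum_j F_{i+j}\cap\beta^j\b A$.

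Next I would identify the associated graded. The key computation is that $\gr_{\beta F}\b A$ is naturally isomorphic, as a graded coalgebra, to the cofree conilpotent cocommutative coalgebra on $\gr^\beta_F A_{[-1]} = (\gr_F A)_{[-1]}$; concretely, passing to $\gr^\beta$ replaces the shuffle Hopf algebra $T(A_{[-1]})$ by the symmetric (co)algebra $\Symm((\gr_F A)_{[-1]})$ because on the associated graded of the length filtration the shuffles of distinct-length pieces become symmetric, and then the further $F$-grading is harmless since $\gr_F A$ is a CDGA by the almost-commutativity hypothesis. Under this identification the induced coproduct is visibly cocommutative, and the induced differential is the Harrison/bar differential of the CDGA $\gr_F A$, so $\gr_{\beta F}\b A$ is indeed a (cocommutative) DGAC. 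This is exactly the statement that $\xi(\b A,\beta F)$ degenerates to the Chevalley--Eilenberg / cocommutative bar coalgebra of $\gr_F A$, which is the classical PBW-type fact that the bar construction of an almost commutative algebra is almost cocommutative.

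The routine but slightly delicate point — and the place I would be most careful — is the bookkeeping of the two filtration indices simultaneously in the formula $(\beta F)_i \b A = \sum_j F_{i+j}\cap\beta^j\b A$: one must check that the shift by $j$ is precisely what makes $F$-multiplicativity on $A$ (which only gives $F_a\cdot F_b\subseteq F_{a+b}$, not strict additivity) compatible with the drop in length caused by the bar differential, so that $\delta$ is genuinely filtered and, on $\gr$, acts with the right weight. I expect this convolution-of-filtrations verification, rather than any conceptual difficulty, to be the main obstacle; everything else is a matter of quoting the shuffle Hopf algebra structure and the hypothesis that $\gr_F A$ is commutative. Completeness and exhaustiveness of $\beta F$ follow because $F$ is complete and exhaustive on each $\beta^j\b A$ (which is a finite-length-bounded-below piece in each fixed tensor degree) and $\b A = \bigcup_j \beta^j\b A$.
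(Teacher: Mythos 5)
Your overall strategy (coproduct compatibility via the shuffle Hopf structure, reduction of the differential check to the length-two component, cocommutativity read off from the associated graded) is the same as the paper's, but there is a genuine gap at the one step that carries the content of the lemma: the claim that $\delta$ preserves $\beta F$. You attribute this to $F$-multiplicativity ($F_a\cdot F_b\subseteq F_{a+b}$) plus careful bookkeeping of the shift by $j$ in $(\beta F)_i=\sum_j F_{i+j}\cap\beta^j$, but multiplicativity is not enough. Since the bar differential is a coderivation, the only nontrivial check is its corestriction to cogenerators on the length-two part, where
\[
 (\beta F)_i(A_{[-1]}^{\ten 2})= F_{i+1}(A_{[-1]}^{\ten 2})+ F_{i+2}(\L^2A)_{[-2]},
\]
and one needs multiplication to send the second summand into $(\beta F)_iA_{[-1]}=F_{i+1}A$. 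Multiplicativity only gives $F_{i+2}A$, which (the filtration being increasing) is strictly too big in general; what is needed is exactly the commutator estimate $[F_a,F_b]\subseteq F_{a+b-1}$, i.e.\ commutativity of $\gr_FA$. In your write-up the hypothesis that $\gr_FA$ is commutative is invoked only later, for the identification of the associated graded, and the filteredness of $\delta$ is presented as a ``convolution-of-filtrations'' verification that would in fact fail on the $\L^2$ component as planned. This is not a minor bookkeeping issue: the lemma is precisely the statement that almost commutativity of $A$ is what makes $\beta F$ stable under the differential (indeed $\beta F$ is the smallest almost cocommutative filtration inducing $F_{i+1}$ on cogenerators).

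A second, lesser error: your identification $\gr^{\beta F}\b A\cong \Symm((\gr_FA)_{[-1]})$ with the Harrison/bar differential of the CDGA $\gr_FA$ is wrong. Passing to the associated graded does not change the underlying size, so $\gr^{\beta F}\b A$ has the same underlying graded space as $T(A_{[-1]})$; it is the cofree Poisson coalgebra $\Co\Symm_R(\gr^F_{*+1}\Co\Lie_R A)$ (symmetric coalgebra on the Harrison-type indecomposables, not on $(\gr_FA)_{[-1]}$ itself), and its differential involves both the product and the induced Lie bracket on $\gr_FA$, the bracket term coming from the very $\L^2$ component discussed above. The cocommutativity of the coproduct on the associated graded — which is all the lemma needs — does still hold and can be seen from the correct identification or directly from the Hopf-algebra structure, but as stated your description of $\gr^{\beta F}\b A$ and of its differential is incorrect.
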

\begin{proof}
 The filtration $\beta$ is automatically preserved by the comultiplication, making $(\b A)_{\#}$ a filtered coalgebra, and so $(\beta F)$ also gives a filtered coalgebra structure. To see that $\b A$ is a filtered DGAC, it only remains to show that the spaces  $(\beta F)_i\b A$ are closed under the chain differential. Since the latter is a coderivation, it suffices to check that it induces a filtered map on cogenerators.

The filtration induced by $\beta$ on cogenerators  $A_{[-1]}$ (regarded as a quotient of  $ \b A$)  is just $A_{[-1]}= \gr^{\beta}_1A_{[-1]}$, so 
$(\beta F)_i A_{[-1]}=F_{i+1}A_{[-1]}$. We also get  $\beta^1(A_{[-1]}^{\ten 2})=A_{[-1]}^{\ten 2} $, $\beta^2(A_{[-1]}^{\ten 2})=\L^2A_{[-1]}$, and $\beta^3(A_{[-1]}^{\ten 2})=0$, so 
\[
 (\beta F)_i(A_{[-1]}^{\ten 2})= F_{i+1}(A_{[-1]}^{\ten 2})+ F_{i+2}(\L^2A)_{[-2]}.
\]

On cogenerators, the differential $\delta_{\b A}$ is given by 
\[
 (A_{[-1]}^{\ten 2}) \oplus A_{[-1]} \xra{( \cdot_A ,\delta_A)} A_{[-2]},
\]
where $\cdot_A$ and $\delta_A$ denote the multiplication and chain differential on $A$.  Both $\cdot_A$ and $\delta_A$ 
automatically preserve $F$, so the only remaining condition to ensure that $\delta_{\b A}((\beta F)_i) \subset(\beta F)_i$
is that multiplication sends $F_{i+2}(\L^2A)$ to $F_{i+1}A$ --- this is precisely the condition that $\gr_FA$ be commutative.

It remains to show that the filtered DGAC $\b A$ is  almost cocommutative. Observe that  $\gr_{\beta}^1(\b A)_{\#}$ is the cofree (ind-conilpotent) graded Lie coalgebra $(\Co\Lie_R A)_{\#}$, and that the PBW filtration $\beta$ on $(\b A)_{\#}$ is then just induced from the constant filtration $\beta=\beta^1$ on $(\Co\Lie_R A)_{\#}$ by regarding $(\b A)_{\#}$
as its universal (ind-conilpotent) enveloping coalgebra. The filtration $\beta F$ on $\b A$ is similarly induced from its corestriction to $(\Co\Lie_R A)_{\#}$, where $(\beta F)_i=F_{i+1}$. In particular, on associated gradeds this implies that  
\[
 \gr^{\beta F}\b A_{\#} \cong \Co\Symm_R(\gr^F_{*+1}\Co\Lie_R A)_{\#},
\]
a cofree (ind-conilpotent) graded Poisson coalgebra, so the  comultiplication on $\gr^{\beta F}\b A$ is indeed cocommutative.
%
\end{proof}

In fact, observe that we can characterise $\beta F$ as the smallest almost cocommutative filtration on $\b A$ for which the induced filtration on cogenerators is $(\beta F)_i A_{[-1]}=F_{i+1}A_{[-1]}$. 

\begin{definition}\label{betastardef}
 Given an almost cocommutative DGAC $(C,F)$ over $R$, define the filtration $\beta^*F$ on $\Omega C$ and $\Omega_+C$ by convolution with the PBW filtration. Explicitly, define a comultiplication $\Delta$ on $T(C_{[1]})$ to be the algebra morphism sending $c \in C_{[1]}$ to $c\ten 1 + 1 \ten c$, and let $\beta^*_r:= \ker (\Delta^{(r+1)}\co T(C_{[1]}) \to T_+(C_{[1]})^{\ten r+1})$ be the kernel of the iterated comultiplication. We then set
\[
 (\beta^* F)_i\Omega C: = \sum_j   F_{i-j}\cap \beta^*_j\Omega C,
\]
and similarly for $\Omega_+C$. We then define $\hat{\Omega}_+C$ to be the completion with respect to $\beta^*$.
\end{definition}

\begin{lemma}\label{betastarnice}
The filtration $\beta^* F$ makes $\hat{\Omega} A$ into an almost commutative DGAA.
\end{lemma}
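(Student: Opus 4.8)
The plan is to prove Lemma~\ref{betastarnice} by mirroring the proof of Lemma~\ref{betanice}, since the filtration $\beta^{*}F$ on $\Omega C$ is set up precisely to be Koszul-dual to $\beta F$ on $\b A$; everything amounts to transposing the three ingredients of that proof to the cobar side. First I would observe that $\beta^{*}$, being the coradical-type filtration $\beta^{*}_{r}=\ker(\Delta^{(r+1)})$ on the tensor algebra $T(C_{[1]})$, automatically behaves with respect to the concatenation product (the dual statement to ``$\beta$ behaves with respect to the comultiplication''), so the convolved filtration $\beta^{*}F$ makes $(\Omega C)_{\#}$ into a filtered algebra, and hence also $\hat{\Omega}C$ and $\hat{\Omega}_{+}C$ after completion. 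It then remains to check: (i) each $(\beta^{*}F)_{i}$ is closed under the chain differential; and (ii) the induced product on $\gr^{\beta^{*}F}\hat{\Omega}C$ is graded-commutative.

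For (i), since the differential on $\Omega C$ is a derivation it suffices to check that it is filtered on the generators $C_{[1]}$. A direct computation with the iterated coproduct gives $\beta^{*}_{j}(C_{[1]})=C_{[1]}$ for $j\ge 1$ (as $\Delta(c)=c\ten 1+1\ten c$ has no component in $T_{+}^{\ten 2}$), while on $C_{[1]}^{\ten 2}$ the projection of $\Delta$ to $T_{+}^{\ten 2}$ is the symmetrization $1+\tau$, so $\beta^{*}_{1}(C_{[1]}^{\ten 2})=\L^{2}C_{[1]}$ and $\beta^{*}_{j}(C_{[1]}^{\ten 2})=C_{[1]}^{\ten 2}$ for $j\ge 2$. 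Hence $(\beta^{*}F)_{i}C_{[1]}=F_{i-1}C_{[1]}$ and $(\beta^{*}F)_{i}(C_{[1]}^{\ten 2})=F_{i-1}(\L^{2}C_{[1]})+F_{i-2}(C_{[1]}^{\ten 2})$. The internal differential $\delta_{C}$ preserves $F$ and stays among the generators, so poses no difficulty; for the coproduct component $\bar\Delta\co C_{[1]}\to C_{[1]}^{\ten 2}$ one uses that $(C,F)$ is a filtered coalgebra to get $\bar\Delta(F_{i-1}C)\subseteq F_{i-1}(C^{\ten 2})$, and then that $\gr_{F}C$ is cocommutative to conclude $\bar\Delta x-\tau\bar\Delta x\in F_{i-2}(C^{\ten 2})$ for $x\in F_{i-1}C$; thus $\bar\Delta x$ agrees modulo $F_{i-2}$ with its $\tau$-symmetrization, which under the d\'ecalage $C_{[1]}^{\ten 2}\cong(C^{\ten 2})_{[2]}$ lands in $\L^{2}C_{[1]}$. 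This is exactly the required containment, so the almost-cocommutativity hypothesis on $(C,F)$ plays here the role that commutativity of $\gr_{F}A$ played in Lemma~\ref{betanice}.

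For (ii), I would argue dually to the end of the proof of Lemma~\ref{betanice}: on associated gradeds the multiplication $\gr^{\beta^{*}F}_{i}(\Omega C\ten\Omega C)\to\gr^{\beta^{*}F}_{i}\Omega C$ is identified with the product of a free graded-commutative (Poisson) algebra built on $\gr^{F}_{*-1}$ of a free Lie algebra on $C_{[1]}$, with chain differential combining the coproduct and cobracket of $\gr_{F}C$ --- dual to the free Poisson coalgebra $\Co\Symm_{R}(\gr^{F}_{*+1}\Co\Lie_{R}A)$ of Lemma~\ref{betanice}. In particular this product is graded-commutative, so $\gr^{\beta^{*}F}\hat{\Omega}C$ is a CDGA and $\hat{\Omega}C$ (as well as $\hat{\Omega}_{+}C$) is almost commutative. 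Passing from $\Omega C$ to its $\beta^{*}$-completion is harmless here, since completion along $\beta^{*}F$ leaves each associated graded piece unchanged.

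I expect the main obstacle to be the bookkeeping in step~(i): correctly reading off the induced filtrations on $C_{[1]}$ and $C_{[1]}^{\ten 2}$ from the convolution formula, and keeping track of the d\'ecalage that converts the cosymmetric part of $\bar\Delta$ into the exterior power $\L^{2}C_{[1]}$, so that the cocommutativity hypothesis lines up with --- rather than contradicts --- the filtration bound. One could alternatively try to deduce the lemma formally from Lemma~\ref{betanice} via the bar--cobar adjunction, but making the correspondence between the two convolved PBW filtrations precise looks at least as delicate as the direct verification.
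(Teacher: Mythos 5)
Your proposal is correct and is essentially the paper's own argument: the paper proves Lemma \ref{betastarnice} in one line by noting that $(\b,\beta)$ and $(\Omega,\beta^*)$ are dual, so the proof of Lemma \ref{betanice} adapts after taking shifts and duals, and your write-up simply carries out that dualisation explicitly (filtered algebra structure, checking the differential on generators via almost cocommutativity, and identifying the associated graded as a free Poisson-type algebra), including the correct handling of the d\'ecalage signs.
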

\begin{proof}
The constructions $(\b, \beta)$ and $(\Omega, \beta^*)$ are dual to each other, so
 the proof of Lemma \ref{betanice}  adapts  after taking shifts and duals.
\end{proof}

\begin{definition}\label{bBD1def}
 Define the functors $\b_{BD_1}$ and $\Omega_{BD_1}$ by $\b_{BD_1}(A,F):= (\b A, \beta F)$ and $\Omega_{BD_1}(C,F):= (\hat{\Omega} C, \beta^*F)$; define $\b_{BD_1,+}$ and $\Omega_{BD_1,+}$ similarly.
\end{definition}

\begin{lemma}
The functor  $\Omega_{BD_1,+}$ is left adjoint to the functor $\b_{BD_1,+}$ from complete non-unital almost commutative DGAAs $A$ over $R$ to non-counital almost cocommutative DGACs $C$ over $R$.
\end{lemma}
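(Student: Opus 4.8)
The plan is to bootstrap from the classical (unfiltered) bar--cobar adjunction $\Omega_+\dashv\b_+$ of Definition~\ref{bardef}, checking that it restricts to the almost (co)commutative filtered categories along $\b_{BD_1,+}$ and $\Omega_{BD_1,+}$. Recall that this classical adjunction is mediated by twisting morphisms: for a non-counital DGAC $C$ and a complete non-unital DGAA $A$ over $R$, both $\Hom_{\mathrm{DGAA}}(\Omega_+C,A)$ and $\Hom_{\mathrm{DGAC}}(C,\b_+A)$ are naturally identified with the set of degree $-1$ maps $\tau\co C\to A$ satisfying the Maurer--Cartan equation in the convolution DGAA $\HHom_R(C,A)$; concretely, $\tau$ is the restriction of a DGAA map $f\co\Omega_+C\to A$ to the generators $C_{[1]}$, and the projection to cogenerators $A_{[-1]}$ of a DGAC map $g\co C\to\b_+A$. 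Since $A$ is complete for $F$ and $\hat{\Omega}_+C$ is the completion of $\Omega_+C$ for $\beta^*F$, a filtered map $\hat{\Omega}_+C\to A$ is the same datum as a filtered map $\Omega_+C\to A$; so the lemma follows once one shows that, under the identifications above, the following three conditions on $\tau$ are equivalent: (i) $f$ is filtered for $(\beta^*F,F)$; (ii) $g$ is filtered for $(F,\beta F)$; (iii) $\tau(F_iC)\subseteq F_{i+1}A$ for every $i$.

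First I would dispatch the implications (i)$\Rightarrow$(iii) and (ii)$\Rightarrow$(iii), which are immediate: $f|_{C_{[1]}}=\tau$ and $(\beta^*F)_i\cap C_{[1]}=F_{i-1}C_{[1]}$ (the $j=1$ term of Definition~\ref{betastardef}), while the cogenerator component of $g$ is $\tau$ and $(\beta F)_iA_{[-1]}=F_{i+1}A_{[-1]}$ by Definition~\ref{betadef}; restricting to these graded pieces reads off (iii). For the converse implications (iii)$\Rightarrow$(i) and (iii)$\Rightarrow$(ii) I would use that $\beta^*F$ and $\beta F$ are detected on (co)generators among almost (co)commutative filtrations --- they are, respectively, the largest almost commutative filtration on $\Omega C$ and the smallest almost cocommutative filtration on $\b A$ with the prescribed induced filtration on (co)generators, as observed after Lemma~\ref{betanice} and dually for Lemma~\ref{betastarnice}. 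Since $f$ is an algebra map and $g$ a coalgebra map, one then only needs: on the $\Omega$-side, that $f$ sends a PBW product of Lie words in generators $c_k\in F_{a_k}C_{[1]}$ to the matching product of iterated commutators of the $\tau(c_k)\in F_{a_k+1}A$, landing in the correct filtration level because, by commutativity of $\gr_FA$, a commutator $[F^pA,F^qA]$ lies in $F^{p+q+1}A$, one step past the product $F^pA\cdot F^qA\subseteq F^{p+q}A$; and dually, on the $\b$-side, that $g$ sends $F_iC$ through iterated coproducts into tensors which, by cocommutativity of $\gr_FC$, lie symmetrically enough to fall into $\beta^j\b A\cap F_{i+j}\b A$ for the right $j$. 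In each case the $\beta$/$\beta^*$ filtrations are built precisely to make this work.

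I expect the main obstacle to be exactly this last step of filtration bookkeeping: tracking how the shift by one in (iii) interacts with the conventions of Definitions~\ref{betadef} and~\ref{betastardef} and with how far iterated (co)brackets move the filtration, especially on the coalgebra side where one must control the $\beta^j$-levels of iterated coproducts of a filtered element using only \emph{almost} (not strict) cocommutativity. A conceptually cleaner route, and the one I would ultimately prefer, is to recognise $\b_{BD_1,+}$ and $\Omega_{BD_1,+}$ as the bar and cobar functors attached to the complete filtered operad $BD_1$ of \S\ref{centresn} and its Koszul-dual cooperad in $dg\Mod_{\bG_m}(\Q[\dbar])$, so that the adjunction becomes the standard one supplied by the associated twisting-morphism bifunctor, with the degree shifts forced by the weight grading on $BD_1$ under which the bracket of $\gr BD_1$ has weight $-1$. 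Everything else is the classical $R$-linear bar--cobar adjunction; the compatibility of its unit and counit with the filtrations is then the formal consequence of the equivalence (i)$\iff$(ii)$\iff$(iii) above.
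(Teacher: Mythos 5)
Your proposal is essentially the paper's own argument: the proof there consists precisely of identifying both $\Hom_{DGAA}(\Omega_{BD_1,+}C,A)$ and $\Hom_{DGAC}(C,\b_{BD_1,+}A)$ with the set of Maurer--Cartan elements $\{f\in F_1\HHom_R(C,A)^1 : [\delta,f]+f\smile f=0\}$ in the convolution algebra, i.e.\ twisting morphisms satisfying exactly your condition (iii), with the filtration bookkeeping you describe (commutators dropping one filtration step on the $\Omega$-side, almost cocommutativity controlling the $\beta^j$-levels on the $\b$-side) left implicit in the paper. Your extra verification and the operadic reformulation are consistent with this, so no gap beyond minor indexing conventions.
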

\begin{proof}
 Given $A$ and $C$, the sets $\Hom_{DGAA}(\Omega_+C,A)$ and $\Hom(C, \b A)$ can both be identified with the set 
\[
 \{f \in F_1\HHom_R(C,A)^1 ~:~ [\delta, f] + f\smile f =0\},
\]
where the product $\smile$ combines multiplication on $A$ with comultiplication on $C$.
\end{proof}
Observe that the product $\smile$ makes the complex $\HHom_R(C,A)$ into an almost commutative DGAA, so $F_1\HHom_R(C,A)$ is closed under the commutator, hence a DGLA.

\begin{lemma}\label{barcobarprop1}
 If $A$ is a complete filtered non-unital almost commutative DGAA with $\gr_FA$  flat over $R$, then the co-unit $\vareps_A\co \Omega_{BD_1,+}\b_{BD_1,+}A \to A$ of the adjunction is a filtered quasi-isomorphism.
\end{lemma}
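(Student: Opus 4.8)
The plan is to reduce everything to the level of associated graded complexes. The underlying (unfiltered) co-unit $\Omega_+\b_+A \to A$ of the classical bar--cobar adjunction is a quasi-isomorphism for any DGAA over $R$, so the content of the statement is entirely about compatibility with the filtrations: by the formalism of \S\ref{filtrnsn}, a morphism of complete filtered complexes is a filtered quasi-isomorphism exactly when it induces a quasi-isomorphism on associated gradeds (equivalently, when $\g\fr$ of it is a weak equivalence of $\Z[\dbar]$-modules, i.e.\ a quasi-isomorphism after forgetting $\dbar$). One checks that the filtrations involved are exhaustive (because $F$ is, and $\beta,\beta^*$ preserve this) and complete (by construction $\Omega_{BD_1,+}$ uses the completion $\hat{\Omega}$), and that inside each fixed tensor-length component of $\b A$ and of $\hat{\Omega}\b A$ the filtrations $\beta$ and $\beta^*$ are bounded, which supplies the convergence needed to pass freely between the filtered and graded pictures. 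So it suffices to prove that $\gr(\vareps_A)$ is a quasi-isomorphism.

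First I would identify the associated graded of the source. By the computation in the proof of Lemma \ref{betanice}, $\gr^{\beta F}\b_{BD_1,+}A$ is the Poisson coalgebra $\Co\Symm_R(\gr^F_{\bullet+1}\Co\Lie_R(\gr_F A))$ --- that is, the operadic bar construction $\b_{P_1,+}(\gr_F A)$ of the Poisson algebra $\gr_F A$, whose product is $\gr_F$ of the product on $A$ and whose bracket is induced by the commutator. This complex is flat over $R$, being built from direct sums of $R$-tensor powers of the flat module $\gr_F A$; so, applying the dual computation of Lemma \ref{betastarnice} and using flatness to commute $\gr$ past the $\ten_R$-powers occurring in the cobar functor, I obtain a natural isomorphism
\[
 \gr^{\beta^*\beta F}\bigl(\Omega_{BD_1,+}\b_{BD_1,+}A\bigr) \;\cong\; \Omega_{P_1,+}\b_{P_1,+}(\gr_F A)
\]
under which $\gr(\vareps_A)$ becomes the co-unit $\Omega_{P_1,+}\b_{P_1,+}(\gr_F A) \to \gr_F A$ of the operadic bar--cobar adjunction for the Poisson operad.

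It then remains to invoke that this operadic bar--cobar resolution is a quasi-isomorphism on $R$-flat Poisson algebras (the Poisson operad being defined over $\Q$, hence over $R$; flatness of $\gr_F A$ lets the usual syzygy-filtration spectral-sequence argument run over the base $R$ rather than over a field). Combined with the reduction of the first paragraph, this shows $\vareps_A$ is a filtered quasi-isomorphism. I expect the main obstacle to be the bookkeeping in the second step: matching the two-step filtration $\beta^*\beta F$ with the operadic grading, and checking that passing to $\gr$ turns the completed cobar $\hat{\Omega}$ --- a product over tensor-lengths --- precisely into the operadic Poisson cobar, which is where the completeness and the flatness hypothesis have to be used with some care so that $\gr$ genuinely commutes with the constructions.
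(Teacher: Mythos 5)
Your proposal is correct and follows essentially the same route as the paper: reduce to associated gradeds via the complete-filtration formalism, identify $\gr_{\beta}\b_{BD_1,+}$ and $\gr_{\beta^*}\Omega_{BD_1,+}$ with the (graded) Poisson bar and cobar functors applied to $\gr_FA$, and conclude by Koszul (self-)duality of the Poisson operad for $R$-flat algebras. The bookkeeping you defer --- matching the shifts in $\beta$ and $\beta^*$ with the operadic weights --- is precisely what the paper disposes of by noting that the $\bG_m$-equivariant Koszul dual of $\Com\circ\hbar^{-1}\Lie$ is its $\hbar$-shift, this shift being the difference between the PBW and lower central series filtrations.
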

\begin{proof}
 It suffices to show that $\vareps$ gives quasi-isomorphisms on the  graded algebras associated to the filtrations. The functors $\gr_{\beta}\b_{BD_1,+}$ and $\gr_{\beta^*}\Omega_{BD_1,+}$ are then just the bar and cobar functors for the Poisson operad, equipped with a $\bG_m$-action setting the commutative multiplication to be of weight $0$ and the Lie bracket of weight $-1$. For $\hbar$ a formal variable of weight $-1$, the graded Poisson operad can be written as $\Com \circ \hbar \Lie$, where $(\hbar \cP)(i):= \hbar^{i-1}\cP(i)$ for any operad $\cP$. The $\bG_m$-equivariant Koszul dual of the graded Poisson operad is then $(\Com \circ \hbar \Lie)^! = (\hbar^{-1} \Com) \circ \Lie = \hbar^{-1} (\Com  \circ \hbar \Lie) $, so it is self-dual after a shift in filtrations. This shift is precisely the difference between PBW and lower central series, so $\gr \vareps$ is a graded quasi-isomorphism by Koszul duality for the Poisson operad.
\end{proof}

\subsection{Hochschild complexes}

Recall that we are fixing a CDGA $R$ over $\Q$.

\begin{definition}\label{HHdef0}
 For an almost commutative DGAA $(A,F)$ over $R$ and a filtered $(A,F)$-bimodule $(M,F)$ in chain  complexes for which the left and right $\gr^FA$-module structures on $\gr^FM$ agree, we define the filtered chain complex
\[
 \CCC_{R, BD_1}(A,M)
\]
to  be the completion of the cohomological  Hochschild complex $\CCC_R(A,M)$ (rewritten as a chain complex)  with respect to the filtration $\gamma F$  defined as follows. We may identify $ \CCC_R(A,M)$ with the subcomplex of 
\[
 \HHom_R(\b A, \b(A \oplus M_{[1]})) 
\]
 consisting of  coderivations extending the zero coderivation  on $\b A$. The hypotheses on $M$ ensure that $A \oplus M$ is almost commutative (regarding $M$ as a square-zero ideal), so we have filtrations $\beta F$ on $\b A$ and $\b(A \oplus M_{[1]})$. We then define $(\gamma F)_i$ to consist of coderivations sending $(\beta F)_j \b A$ to $(\beta F)_{i+j-1}\b(A \oplus M)$.

Since a coderivation is determined by its value on cogenerators, and the cogenerators of the bar construction have weight $1$ with respect to the PBW filtration $\beta$, we may regard $(\gamma F)_i \CCC_R(A,M)_{\#}$ as the subspace of   $  \HHom_R(\b A,  M)^{\#}$ consisting of maps sending $(\beta F)_j\b A $ to $F_{i+j} M$.

We also define the subcomplex $\CCC_{R, BD_1,+}(A,M) $ to be the kernel of $\CCC_{R, BD_1}(A,M) \to M$, or equivalently  $\HHom_R(\b_+ A,  M)^{\#}$. 

\end{definition}

\begin{remark}\label{HKRrmk}
 When the filtrations $F$ are trivial in the sense that $A= \gr^F_0A$, $M=\gr^F_0M$,  we simply write $\gamma := \gamma F$, and  observe that $\gamma_0 \CCC_R(A,M)=M$, while $\gamma_1 \CCC_R(A,M) $ is just the Harrison cohomology complex. When $A$ is moreover cofibrant as a CDGA, observe that the HKR isomorphism gives a filtered levelwise quasi-isomorphism $(\CCC_R(A,M), \tau^{\HH}) \to (\CCC_{R, BD_1}(A,M),\gamma)$, where $\tau^{\HH}$ denotes good truncation in the Hochschild direction as featured in \cite[Definition \ref{DQnonneg-HHdef}]{DQnonneg}.
\end{remark}

\begin{lemma}\label{HHaclemma}
 If $ \phi \co (A,F)\to (D,F)$ is a morphism of almost commutative DGAAs over $R$, then  $\CCC_{R,BD_1}(A,D)$ is an almost commutative DGAA under the cup product, and $\CCC_{R,BD_1}(A,D) \to D$ is a morphism of almost commutative DGAAs. 
\end{lemma}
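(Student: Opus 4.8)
The plan is to verify the three assertions of Lemma \ref{HHaclemma} --- that the cup product makes $\CCC_{R,BD_1}(A,D)$ an almost commutative DGAA, that the natural evaluation $\CCC_{R,BD_1}(A,D) \to D$ is a DGAA map, and that both are compatible with the filtrations --- by reducing everything to the level of cogenerators, where the computations become the ``almost commutativity'' bookkeeping already carried out in Lemmas \ref{betanice} and \ref{betastarnice}. First I would recall the standard fact that $\CCC_R(A,D)$, identified as in Definition \ref{HHdef0} with coderivations of $\b(A\oplus D_{[1]})$ lifting $0$ on $\b A$, carries the cup product coming from the multiplication $A\oplus D \to A\oplus D$ induced by $\phi$ on $A$ and by the (bi)module structure on $D$ (using $\phi$ to make $D$ an $A$-bimodule): concretely, for $f,g\in \HHom_R(\b A, D)^{\#}$ the cup product is the convolution $f\smile g = m_D\circ(f\ten g)\circ \Delta_{\b A}$. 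This is classically an associative product, and the Hochschild differential is a derivation for it; none of that is new. The evaluation map $\CCC_{R,BD_1}(A,D)\to D$ sends a coderivation to its weight-zero component, i.e. restricts $f\in\HHom_R(\b A,D)$ along the counit $R\to \b A$; since that counit is an algebra map for the shuffle comultiplication, evaluation is a DGAA morphism on the nose.

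Next I would handle the filtration claims, which are the real content. Using the reformulation at the end of Definition \ref{HHdef0}, $(\gamma F)_i\CCC_R^{\#}(A,D)$ is the set of $f\in\HHom_R(\b A,D)^{\#}$ carrying $(\beta F)_j\b A$ into $F_{i+j}D$. To see $(\gamma F)_i \smile (\gamma F)_{i'}\subseteq (\gamma F)_{i+i'}$, I would feed an element of $(\beta F)_j\b A$ through $\Delta_{\b A}$; the point is that the shuffle comultiplication on $\b A$ is filtered for $\beta F$ in the sense that $\Delta((\beta F)_j\b A)\subseteq \sum_{j_1+j_2 = j+?}(\beta F)_{j_1}\b A\ten (\beta F)_{j_2}\b A$ --- this is exactly the compatibility of the Poincar\'e--Birkhoff--Witt filtration $\beta$ with comultiplication noted in the proof of Lemma \ref{betanice}, convolved with $F$ --- so that $f\smile g$ lands in $\sum m_D(F_{i+j_1}D\ten F_{i'+j_2}D)\subseteq F_{i+i'+j}D$ because $F$ on $D$ is multiplicative. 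The index shift (the $-1$ in ``$i+j-1$'' of Definition \ref{HHdef0}, equivalently the weight-$1$ normalisation of cogenerators) has to be tracked carefully here, but it is linear and the cup product of two coderivations corresponds on cogenerators to a single convolution, so the shifts add up correctly; I would state this as the one bookkeeping point to check. Almost commutativity then follows: on $\gr^{\gamma F}$, the cup product becomes the convolution product built from the \emph{cocommutative} comultiplication on $\gr^{\beta F}\b A$ (Lemma \ref{betanice}) and the \emph{commutative} multiplication on $\gr^F D$, hence is graded-commutative --- equivalently, $\gr^{\gamma F}\CCC_{R,BD_1}(A,D)$ is the filtered piece of $\CHom$ from the Poisson cooperad to the Poisson operad, which is commutative. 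Finally, evaluation is obviously filtered: restricting along $R=(\beta F)_{?}\b A \hookrightarrow \b A$ sends $(\gamma F)_i$ into $F_iD$ by definition, and passing to associated gradeds it is a map of commutative algebras.

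I would expect the main obstacle to be purely notational rather than conceptual: pinning down, once and for all, the precise normalisation of the $\beta F$-filtration on cogenerators versus on $\b A$, and checking that under convolution the shifts of Definition \ref{HHdef0} compose additively, i.e. that $(\gamma F)_i\smile(\gamma F)_{i'}$ really maps $(\beta F)_j$ to $(\beta F)_{i+i'+j-1}$ and not to an off-by-one neighbour. Everything else --- associativity of $\smile$, the Leibniz rule for the Hochschild differential, the algebra structure on $\gr$, and the fact that completion preserves all of this --- either is classical or is the associated-graded shadow of Lemmas \ref{betanice} and \ref{betastarnice}, and I would simply cite those. A clean way to organise the write-up is to observe that $\CCC_{R,BD_1}(A,D)$ sits inside $\HHom_R(\b_{BD_1}A, D)$ with the convolution product of the almost cocommutative DGAC $\b_{BD_1}A$ and the almost commutative DGAA $D$, a structure already shown (in the proof of the adjunction Lemma preceding Lemma \ref{barcobarprop1}) to make such a $\HHom$ an almost commutative DGAA; the subcomplex of coderivations lifting $0$ is then a subalgebra for formal reasons, and evaluation at the counit is the required DGAA morphism.
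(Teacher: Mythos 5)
Your proposal is correct and follows essentially the same route as the paper, whose entire proof is the observation that $\gr^{\gamma F}\CCC_R(A,D)^{\#} = \HHom(\gr^{\beta F}\b A, \gr^F D)^{\#}$ is a convolution algebra with cocommutative source (Lemma \ref{betanice}) and commutative target, hence commutative; your filtration bookkeeping for the cup product and the projection to $D$ just spells this out. The only slip is terminological: the comultiplication entering the cup product is the deconcatenation coproduct of the DGAC $\b A$ (the shuffle $\nabla$ is the \emph{product} used to define $\beta$), but the filtered-compatibility statement you invoke from the proof of Lemma \ref{betanice} is the right one, so the argument stands.
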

\begin{proof}
 This just follows because  $\gr^{\gamma F}\CCC_R(A,D)^{\#} = \HHom( \gr^{\beta F}\b A, \gr^FD)^{\#}  $,  with $\gr^{\beta F}\b A $ cocommutative and $\gr^FD$ commutative.
\end{proof}

\subsubsection{Brace algebra structures}\label{bracesn} 

Recall that a brace algebra $B$ over $R$ is an  $R$-cochain  complex  equipped with a cup product in the form of a chain  map
\[
 B\ten B \xra{\smile} B,
\]
 and braces in the form of  maps
\[
 \{-\}\{-,\ldots,-\}_r \co B \ten B^{\ten r}\to B^{[-r]}
\]
satisfying the conditions of \cite[\S 3.2]{voronovHtpyGerstenhaber} (where brace algebras are called homotopy $G$-algebras) with respect to the  differential. There is a  brace operad $\Br$ in cochain complexes, whose algebras are brace algebras. The commutator  of the brace $\{-\}\{-\}_1$ is a Lie bracket, so for any brace algebra $B$, there is a natural DGLA structure on $B^{[1]}$. The brace operad is weakly equivalent to the rationalisation of the little discs operad, so brace algebras are a model for $E_2$-algebras in cochain complexes.

\begin{definition}\label{acbracedef}
 Define an decreasing  filtration $\gamma$ on the brace operad $\Br$ by putting the cup product in $\gamma^0$ and the braces $\{-\}\{-,\ldots,-\}_r $
in $\gamma^r$.

Thus a (brace, $\gamma$)-algebra $(A,F)$ in filtered complexes is a brace algebra for which the cup product respects the filtration, and the $r$-braces  send $F_i$ to $F_{i-r}$. We refer to (brace, $\gamma$)-algebras  as almost commutative brace algebras.
\end{definition}
Beware that the filtration $\gamma$ is not the same as that featuring in \cite[Definition 5.3]{safronovPoissonRednCoisotropic}, since we assign higher weights to higher braces.

In an almost commutative brace algebra $A$, the brace $\{-\}\{-\}_1$ is of weight $-1$; since it gives a homotopy between the cup product and its opposite, it follows that the commutator of the cup product is of weight $-1$, so $A$ is almost commutative as a DGAA. Moreover, a brace algebra structure on $A$ induces a dg bialgebra structure on $\b A$, as in \cite[\S 3.2]{voronovHtpyGerstenhaber}, and because $\beta^r \b A \subset (A_{[-1]})^{\ten \ge r}$,  the multiplication on $\b A$ given by braces  preserves the filtration $\beta F$ on  $\b_{BD_1} A$, so it is a filtered bialgebra (with almost cocommutative comultiplication).


\begin{lemma}\label{HHaclemma2}
For any almost commutative DGAA $A$ over $R$,  there is a natural almost commutative brace algebra structure on $\CCC_{R, BD_1}(A)$ over $R$.  In particular, $\CCC_{R, BD_1}(A)_{[-1]}$ is a filtered DGLA over $R$, and its associated graded DGLA is abelian.  
\end{lemma}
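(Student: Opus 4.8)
The plan is to produce the brace algebra structure on $\CCC_{R,BD_1}(A)$ by transporting the standard brace operad action on the Hochschild cochain complex (in the sense of Getzler--Jones, Gerstenhaber--Voronov, as recalled in \cite[\S 3.2]{voronovHtpyGerstenhaber}) through the identification of $\CCC_{R,BD_1}(A)$ with the completion of $\CCC_R(A)$, and then to check that this action is compatible with the filtration $\gamma F$ in the sense of Definition \ref{acbracedef}, i.e. that cup product lies in $\gamma^0$ and the $r$-braces lie in $\gamma^r$. First I would recall that $\CCC_R(A) = \HHom_R(\b_+ A, A) \oplus A$ carries the usual cup product and braces, where the $r$-th brace $\{f\}\{g_1,\ldots,g_r\}$ inserts the operations $g_1,\ldots,g_r$ into $r$ of the inputs of $f$ (with appropriate signs and sums over orderings). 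Since $\beta^r\b A \subseteq (A_{[-1]})^{\ten \geq r}$, a cochain in $(\gamma F)_{i_0}\CCC_R(A)$ sends $(\beta F)_j\b A$ into $F_{i_0+j}A$; inserting $g_k \in (\gamma F)_{i_k}\CCC_R(A)$ into a slot replaces a factor contributing weight $1$ in $\beta$ by one carrying filtration level governed by $i_k$, and tracking the weight bookkeeping shows the brace $\{f\}\{g_1,\ldots,g_r\}$ sends $(\beta F)_j\b A$ to $F_{i_0+i_1+\cdots+i_r + j - r}A$. This is exactly the statement that the $r$-braces drop the $\gamma F$-filtration by $r$, while the cup product (being the $r=0$ operation in disguise, or rather the unlabelled binary product) preserves it. Establishing this weight count carefully is the one genuinely computational point, and I expect it to be the main obstacle — it requires being precise about how the PBW filtration $\beta$ on $\b A$ interacts with the combinatorics of inserting subwords, and about the fact that the $r$ deleted tensor slots each came from $\beta^{\geq 1}$.

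Next I would invoke the brace-algebra relations themselves: the conditions of \cite[\S 3.2]{voronovHtpyGerstenhaber} hold for $\CCC_R(A)$ as cochain operations independently of any filtration, and since all the structure maps we have just checked are filtered, they descend to the completion $\CCC_{R,BD_1}(A)$ with respect to $\gamma F$; completeness causes no trouble because each structure map is continuous for the $\gamma F$-adic topology. Hence $\CCC_{R,BD_1}(A)$ is a (brace, $\gamma$)-algebra over $R$, i.e. an almost commutative brace algebra, and this is natural in $A$ because all the operations are natural in the DGAA $A$.

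For the two consequences: as observed in the discussion following Definition \ref{acbracedef}, in any almost commutative brace algebra the binary brace $\{-\}\{-\}_1$ has weight $-1$ and witnesses a homotopy between the cup product and its opposite, so the commutator bracket $[f,g] = \{f\}\{g\}_1 \pm \{g\}\{f\}_1$ lands in $(\gamma F)_{i+j-1}$; thus $\CCC_{R,BD_1}(A)_{[-1]}$, with the Gerstenhaber bracket, is a filtered DGLA over $R$. Finally, to see the associated graded DGLA is abelian, I would note that $\gr^{\gamma F}\CCC_R(A)^\# = \HHom_R(\gr^{\beta F}\b A, \gr^F A)^\#$ by Lemma \ref{HHaclemma}, and under this identification the Gerstenhaber bracket has strictly negative $\gamma$-weight (weight $-1$), so it vanishes on the associated graded; equivalently, $\gr^{\gamma F}$ of the brace structure is the Poisson brace structure on $\HHom_R(\gr^{\beta F}\b A, \gr^F A)$ with commutative multiplication in weight $0$ and Lie bracket in weight $-1$, and the Lie bracket of a weight-$p$ and weight-$q$ element, having weight $p+q-1 < p+q$, is zero in $\gr^{\gamma F}_{p+q}$. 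This gives the claimed abelianness.
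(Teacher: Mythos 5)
Your proposal is correct and follows essentially the same route as the paper: take the brace operations of \cite[\S 3]{voronovHtpyGerstenhaber} on $\CCC_R(A)$, check that the cup product preserves $\gamma F$ (Lemma \ref{HHaclemma}) and that the $r$-braces drop it by $r$, and deduce the filtered DGLA structure and abelianness of the associated graded from the weight $-1$ of the $1$-brace, exactly as in the discussion after Definition \ref{acbracedef}. The weight count you flag as the main obstacle is handled in the paper not by insertion-into-slots combinatorics but by writing $\{f\}\{g_1,\ldots,g_m\}$ as the composite $f \circ \nabla \circ (\tilde{g}_1\ten\cdots\ten\tilde{g}_m)\circ \Delta^{(m)}$ on $\b A$, where each coderivation $\tilde{g}_i$ shifts $\beta F$ by $q_i-1$, the iterated coproduct preserves $\beta F$ since $(\b A,\beta F)$ is a filtered coalgebra, and the shuffle product $\nabla$ preserves $\beta F$ by the very definition of $\beta$ in Definition \ref{betadef}; this factorisation gives the bound $(\gamma F)_{p+q_1+\cdots+q_m-m}$ immediately and is the cleanest way to make your bookkeeping precise.
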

\begin{proof}
The formulae of \cite[\S 3]{voronovHtpyGerstenhaber} define a brace algebra structure on $\CCC_R(A)$. By Lemma \ref{HHaclemma}, we know that $(\CCC_R(A), \gamma F)$ is an almost commutative DGAA, so it suffices to show that the brace operations have the required weights.

Given $f \in (\gamma F)_p\HHom(\b A, A)$ and $g_i \in  (\gamma F)_{q_i}\HHom(\b A,A)$, each $g_i$ corresponds to a coalgebra  coderivation $\tilde{g}_i$ on $\b A$ sending $(\beta F)_j\b A$ to $(\beta F)_{j+ q_{i}-1}\b A$. 

The element $\{f\}\{g_1, \ldots, g_m\}\in \HHom(\b A,A)$ is the composition
\[
 \b A \xra{\Delta^{(m)}} (\b A)^{\ten m} \xra{ \tilde{g}_1\ten \ldots\ten \tilde{g}_m}( \b A)^{\ten m} \xra{\nabla} \b A \xra{f} A,
\]
where $\Delta^{(m)}$ is the iterated coproduct, and $\nabla$  the shuffle product. The definition of $\beta$ ensures that $\nabla$ preserves the filtration $\beta F$, so we  have
\[
 \{f\}\{g_1, \ldots, g_m\}\in (\gamma F)_{(p+q_1+\ldots +q_m-m)}\HHom(\b A,A).
\]
\end{proof}

\begin{definition}\label{braceopdef}
 Given a brace algebra $C$, define the opposite brace algebra $C^{\op}$ to have the same elements as $C$, but multiplication $b^{\op}\smile c^{\op} := (-1)^{\deg b\deg c} (c\smile b)^{\op}$ and brace operations
given by the multiplication $(\b C^{\op}) \ten (\b C^{\op})\to \b C^{\op}$ induced by the isomorphism $(\b C^{\op})\cong (\b C)^{\op}$. Explicitly,
\[
 \{b^{\op}\}\{c_1^{\op}, \ldots, c_m^{\op}\}:=  \pm\{b\}\{c_m, \ldots, c_1\}^{\op},
\]
where $\pm= (-1)^{m(m+1)/2 + (\deg f-m)(\sum_i \deg c_i -m) +  \sum_{i<j}\deg c_i\deg c_j}$.
\end{definition}
Observe that when a filtered brace algebra $C$ is almost commutative, then so is $C^{\op}$.

\begin{lemma}\label{involutiveHH} 
Given DGAAs $A,D$ over $R$, there is an anti-involution  
\[
 -i \co \CCC_R(A,D)^{\op} \to \CCC_R(A^{\op},D^{\op})
\]
of DGAAs given by
\[
 i(f)(a_1, \ldots, a_m) = - (-1)^{\sum_{i<j}  \deg a_i \deg a_j} (-1)^{m(m+1)/2}f(a_m^{\op}, \ldots , a_1^{\op})^{\op}.
\]

When $A=D$, the anti-involution $-i$ is a morphism of brace algebras, and in particular
 $i \co \CCC_R(A)_{[-1]} \to \CCC_R(A)_{[-1]}$ is a morphism of DGLAs.
Whenever $A$ is a  cofibrant  CDGA over $R$, the map $i$ corresponds under the HKR isomorphism to the involution  which acts on  $\HHom_A(\Omega^p_{A/R},A)$ as scalar multiplication by $(-1)^{p-1}$.
\end{lemma}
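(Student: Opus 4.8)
The plan is to verify directly that the formula for $i$ defines a chain map which is an anti-isomorphism of DGAAs, then promote this to a morphism of brace algebras, and finally match it against the HKR isomorphism. First I would check that $i$ is well-defined as a map of graded vector spaces and squares to the identity (up to the sign built into ``$-i$''): since $i$ reverses the order of the arguments and passes to opposites in both $A$ and $D$, applying it twice restores the original order, and the accumulated Koszul signs $(-1)^{\sum_{i<j}\deg a_i\deg a_j}$ together with the $(-1)^{m(m+1)/2}$ factors cancel in pairs. Next I would verify compatibility with the Hochschild differential: the differential on $\CCC_R(A,D)$ has an ``internal'' piece (built from $\delta_A$, $\delta_D$) and a ``bar'' piece (built from the multiplications on $A$ and the bimodule structure on $D$). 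The internal piece is visibly intertwined by $i$ up to sign; for the bar piece one uses that reversing the order of a product $a_k a_{k+1}$ and passing to $A^{\op}$ reproduces the same product, so the face maps of the Hochschild complex of $A$ are carried to those of $A^{\op}$, the sign bookkeeping being exactly the standard ``opposite simplicial object'' sign $(-1)^{m(m+1)/2}$ relating a cosimplicial object to its reverse. This is the computational heart of the argument and the step I expect to be the main obstacle, since all the signs must be tracked simultaneously through the differential, the cup product and the braces; the clean way to organise it is to phrase everything in terms of the bar coalgebra, noting that $i$ is induced by the canonical coalgebra isomorphism $\b(A^{\op})\cong(\b A)^{\op}$ together with shift signs, so that it automatically commutes with coderivations and intertwines the convolution products.

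Having set this up, the statement that $-i\colon \CCC_R(A,D)^{\op}\to\CCC_R(A^{\op},D^{\op})$ is a map of DGAAs for the cup product follows because the cup product on $\CCC_R(A,D)$ is the convolution product with respect to the comultiplication on $\b A$ and the multiplication on $D$; under $\b(A^{\op})\cong(\b A)^{\op}$ the comultiplication on $\b(A^{\op})$ is the opposite of that on $\b A$, and multiplication on $D^{\op}$ is the opposite of that on $D$, so convolution is carried to the opposite convolution, which is precisely the cup product on $\CCC_R(A,D)^{\op}$ transported over. When $A=D$ the same coalgebra identification intertwines the bialgebra structure on $\b A$ coming from the brace operations (as in \cite[\S 3.2]{voronovHtpyGerstenhaber}) with that on $(\b A)^{\op}$, which by Definition \ref{braceopdef} is exactly the brace structure on $\CCC_R(A)^{\op}$; restricting to cogenerators recovers the explicit formula for $\{b^{\op}\}\{c_1^{\op},\ldots,c_m^{\op}\}$ quoted there, and the claim about $\{-\}\{-\}_1$ already shows the cup product is commutative up to homotopy so that $i$ being a brace morphism yields the DGLA statement on $\CCC_R(A)_{[-1]}$ after the shift (the shift by $[-1]$ contributing the factor turning the anti-morphism of associative algebras into an honest morphism of Lie algebras, compatibly with the global sign $-i$).

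Finally, for the HKR comparison I would take $A$ cofibrant and recall that the HKR quasi-isomorphism identifies the (truncated) Hochschild complex with $\bigoplus_p\HHom_A(\Omega^p_{A/R},A)$, sending a $p$-form-valued derivation to the cochain $(a_1,\ldots,a_p)\mapsto \langle\varphi, da_1\wedge\cdots\wedge da_p\rangle$ and more generally via the antisymmetrisation/shuffle maps. It then suffices to compute $i$ on such normalised antisymmetric cochains: reversing the arguments $a_1,\ldots,a_p$ and passing to $A^{\op}$ (which, since $A$ is commutative, is just $A$) acts on $da_1\wedge\cdots\wedge da_p$ by the permutation reversing $p$ letters, i.e.\ by $(-1)^{p(p-1)/2}$, while the explicit sign in the formula for $i$ contributes $-(-1)^{p(p-1)/2}$ on the antisymmetric part (the Koszul sign $(-1)^{\sum_{i<j}\deg a_i\deg a_j}$ being absorbed into the antisymmetrisation); the product of these is $-(-1)^{p(p-1)/2}\cdot(-1)^{p(p-1)/2}\cdot(-1)^{?}$, and after also accounting for the shift sign one is left with scalar multiplication by $(-1)^{p-1}$ on $\HHom_A(\Omega^p_{A/R},A)$, as claimed. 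The only subtlety here is checking that $i$ indeed preserves the Harrison/antisymmetric subspace — but this is immediate from the bar-coalgebra description, since $i$ is built from a coalgebra map and hence respects the cocommutative (shuffle) filtration that cuts out the HKR summands.
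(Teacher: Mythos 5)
Your proposal is correct and takes essentially the same route as the paper, whose proof simply defers to \cite[\S 2.1]{braunInvolutive} (adapted as in \cite{DQnonneg}) together with the observation about braces: there too the involution is the one induced by the coalgebra isomorphism $\b(A^{\op})\cong(\b A)^{\op}$, so compatibility with the Hochschild differential, the cup (convolution) product and the brace/bialgebra structure is automatic, exactly as you organise it. The only blemish is your final sign bookkeeping: the formula carries $(-1)^{m(m+1)/2}$, not $(-1)^{p(p-1)/2}$, and on an antisymmetric $p$-cochain (all arguments of degree $0$, the Koszul sign handling the general case) one gets $i(f)=-(-1)^{p(p+1)/2}(-1)^{p(p-1)/2}f=-(-1)^{p^2}f=(-1)^{p-1}f$ directly, with no residual $(-1)^{?}$ factor or extra shift sign needed.
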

\begin{proof}
 This is effectively \cite[\S 2.1]{braunInvolutive}, adapted along the lines of \cite[Lemma \ref{DQnonneg-involutiveHH}]{DQnonneg}, together with the observation that $-i$ acts on braces in the prescribed manner.
\end{proof}

\subsubsection{Semidirect products}\label{semidirectsn}

\begin{lemma}\label{swisslemma}
 Given a morphism $\phi \co A \to D$  of almost commutative filtered DGAAs over  $R$, the almost commutative DGAA $\CCC_{R, BD_1}(A,D)$ is a brace module (in the sense of \cite[Definition 3.3]{safronovPoissonRednCoisotropic}\footnote{In particular, this must incorporate a  DGAA homomorphism, as in the final arXiv version (but not the journal publication) of \cite{safronovPoissonRednCoisotropic}.}) over the  
 almost commutative  brace algebra $\CCC_{R, BD_1}(A)$ of Hochschild cochains. 
 
This gives rise to  a  morphism
\[
\b_{BD_1,+}\CCC_{R,BD_1} (A)  \to\b_{BD_1,+} \CCC_{R,BD_1}(\CCC_{R,BD_1} (A,D))  
\]
of almost cocommutative bialgebras.
\end{lemma}
\begin{proof}
 Given $g_1, \ldots, g_m\in \CCC_{R,BD_1}(A)$ and $f \in \CCC_{R,BD_1}(A,D)$, the  brace operation $\{f\}\{g_1, \ldots, g_m\}$ is well-defined as an element of $\CCC_{R,BD_1}(A,D)$. Reasoning as in \cite[\S 3.2]{voronovHtpyGerstenhaber}, this combines with the morphism $\phi_* \co \CCC_{R,BD_1}(A)\to \CCC_{R,BD_1}(A,D)$
to give an action 
\[
 M_{\bt,\bt} \co  \b_{BD_1}\CCC_{R,BD_1} (A,D)\ten_R \b_{BD_1}\CCC_{R,BD_1}(A)\to \b_{BD_1}\CCC_{R,BD_1}(A,D)
\]
of almost cocommutative dg coalgebras, associative with respect to the brace multiplication of \cite{voronovHtpyGerstenhaber}. This respects the filtrations for the same reason that the multiplication does on the bar construction of an almost commutative brace algebra (Definition \ref{acbracedef}). 

Thus $\CCC_{R,BD_1}(A,D)$ is a brace $ \CCC_{R,BD_1}(A)$-module. 
On restricting to cogenerators, the multiplication above gives a map
\begin{align*}
 \b_{BD_1}\CCC_{R,BD_1} (A,D)\to &\HHom( \b_{BD_1}\CCC_{R,BD_1}(A),\CCC_{R,BD_1}(A,D))\\
&\cong \CCC_{R,BD_1}(\CCC_{R,BD_1} (A,D)),
\end{align*}
and as in \cite[Proposition 4.2]{safronovPoissonRednCoisotropic}, this induces a morphism
\[
 \b_{BD_1,+}\CCC_{R,BD_1} (A)  \to\b_{BD_1,+} \CCC_{R,BD_1}(\CCC_{R,BD_1} (A,D))
\]
 of almost cocommutative bialgebras, compatibility with the filtrations being automatic from the description above.
\end{proof}

For an $E_2$-algebra $C$ in chain complexes to act on an $E_1$-algebra $E$  is the same as a morphism from $C$ to the Hochschild complex of $E$. This is what we now construct for Hochschild complexes in the almost commutative setting, so that we will have an almost commutative brace algebra acting on an almost commutative DGAA (or equivalently a $BD_2$-algebra acting on a $BD_1$-algebra).
 Proposition \ref{barcobarprop1} then combines with the adjunction property to give  morphisms 
\[
 \CCC_{R,BD_1} (A) \xla{\sim} \Omega_{BD_1,+}\b_{BD_1,+}\CCC_{R,BD_1} (A)  \to \CCC_{R,BD_1}(\CCC_{R,BD_1} (A,D)),
\]
of almost commutative DGAAs, and we need to enhance this to keep track of the brace algebra structures:

\begin{lemma}\label{barcobarprop2}
  If $A$ is a complete filtered non-unital almost commutative brace algebra  over $R$, then there is a natural almost commutative brace algebra structure on the  DGAA $\Omega_{BD_1,+}\b_{BD_1,+}A$. If  $\gr_FA$ is moreover  flat over $R$, then there is a zigzag of filtered quasi-isomorphisms of almost commutative brace algebras  between $A$ and $\Omega_{BD_1,+}\b_{BD_1,+}A$.
\end{lemma}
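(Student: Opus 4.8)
The plan is to bootstrap the statement for ordinary (unfiltered) brace algebras, where it is essentially the $E_2$-formality–style rectification of $\Omega\b$, and then promote everything to the complete filtered setting by passing to associated gradeds as in the proof of Lemma \ref{barcobarprop1}. Recall that for a brace algebra $B$, the bar construction $\b_{BD_1,+}B$ carries the dg bialgebra structure of \cite[\S 3.2]{voronovHtpyGerstenhaber}, and dually $\Omega_{BD_1,+}$ of a bialgebra again carries a brace algebra structure — this is the standard fact that $\Omega$ and $\b$ are (op)lax comonoidal/monoidal and that a bialgebra structure on a DGAC gives a brace (more precisely, $B_\infty$) structure on its cobar, cf.\ \cite[\S 3.2]{voronovHtpyGerstenhaber}. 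So the first step is: observe that $\b_{BD_1,+}A$ is a filtered dg bialgebra by the brace multiplication (the same reasoning that shows $\beta F$ is multiplicative on $\b_{BD_1}A$, Definition \ref{acbracedef}), hence $\Omega_{BD_1,+}\b_{BD_1,+}A$ inherits a brace algebra structure; the weights of the induced cup product and braces are controlled by the filtrations $\beta F$ on $\b$ and $\beta^*F$ on $\Omega$, exactly as in Lemma \ref{HHaclemma2}, so this brace structure is almost commutative.

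The second step is the zigzag. The counit $\vareps_A \co \Omega_{BD_1,+}\b_{BD_1,+}A \to A$ is already a morphism of DGAAs by the adjunction; but it need not be a brace morphism. The standard remedy (going back to the proof that $\Omega\b$ computes the derived enveloping construction in the $B_\infty$ world) is to interpose the cofree/cobar resolution and use that $\Omega_{BD_1,+}\b_{BD_1,+}A$ is itself a cofibrant replacement of $A$ \emph{as a brace algebra}. Concretely: the brace structure on $\Omega_{BD_1,+}\b_{BD_1,+}A$ together with $\vareps_A$ fits into a zigzag through $\CCC_{R,BD_1}(\Omega_{BD_1,+}\b_{BD_1,+}A)$ and $\CCC_{R,BD_1}(A)$, or more efficiently one appeals to the $B_\infty$ analogue of the fact that $\b$ and $\Omega$ are inverse equivalences at the level of $\infty$-categories of ($E_1$-)algebras and their $B_\infty$-enrichments; the brace-algebra maps in the zigzag are $\vareps$ on one side and, on the other, the canonical brace map coming from Lemma \ref{barcobarprop1} applied to the underlying DGAA together with naturality of the bar/cobar adjunction for brace maps.

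The third step reduces the filtered quasi-isomorphism claim to Lemma \ref{barcobarprop1}: a morphism in $dg\Mod_{\bG_m}(R[\dbar])$ is a weak equivalence iff it is a quasi-isomorphism after forgetting $\dbar$, and the underlying quasi-isomorphism is precisely the content of Lemma \ref{barcobarprop1} (using $\gr_FA$ flat over $R$, so that the associated gradeds are the Poisson bar/cobar with the weight conventions there). Since all the maps in the zigzag are filtered (their filtration-compatibility being automatic from the $\beta F$/$\beta^*F$ descriptions, exactly as noted at the ends of Lemmas \ref{HHaclemma2} and \ref{swisslemma}), they induce maps of $\Z[\dbar]$-modules, and each is a filtered quasi-isomorphism by the same $\gr$-computation.

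The main obstacle is the second step: producing a zigzag whose maps are honest morphisms of \emph{brace} algebras, not merely of DGAAs. The counit $\vareps_A$ is only a DGAA map, so one genuinely needs the $B_\infty$/brace rectification machinery — either by realising $\Omega_{BD_1,+}\b_{BD_1,+}A$ as a cofibrant brace-algebra resolution of $A$ and lifting $\id_A$, or by routing through Hochschild complexes $\CCC_{R,BD_1}(-)$ where the brace structure is manifest and the comparison maps (unit and counit of the relevant adjunctions, plus Lemma \ref{barcobarprop1}) are all brace morphisms by construction. Filtration bookkeeping throughout is routine given the weight assignments already established, so the real work is the unfiltered brace-level statement, which is where one cites \cite{voronovHtpyGerstenhaber} and the attendant $B_\infty$ formalism.
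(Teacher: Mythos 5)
Your first step follows the paper's route: the brace structure on the cobar construction of a (filtered, almost cocommutative) bialgebra is the input the paper takes from Kadeishvili, and the almost commutativity is then verified by an explicit weight computation for $\{c\}\{-\}$ (extended as a derivation and composed with iterated coproducts); your appeal to ``the same bookkeeping as Lemma \ref{HHaclemma2}'' is thinner than the paper's check but is the right kind of argument. The genuine gap is in your second step, and you have in effect named it without closing it. The counit $\vareps_A\co \Omega_{BD_1,+}\b_{BD_1,+}A \to A$ is not a brace morphism, and none of your three proposed remedies supplies the missing comparison: (a) asserting that $\Omega_{BD_1,+}\b_{BD_1,+}A$ is a cofibrant replacement of $A$ \emph{as a brace algebra} is precisely the statement to be proved, since the only map to $A$ you have is $\vareps_A$, which is not a brace map, so there is nothing to lift $\id_A$ against; (b) routing through $\CCC_{R,BD_1}(-)$ cannot give a zigzag between $A$ and $\Omega_{BD_1,+}\b_{BD_1,+}A$, because the Hochschild complex of an algebra is its centre, not a resolution of the algebra, so the comparison maps you invoke do not connect the two objects in question; (c) the appeal to ``$\b$ and $\Omega$ are inverse equivalences with their $B_\infty$-enrichments'' is exactly the assertion at stake, not a proof of it.

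The paper resolves this by a specific device and a specific external input: let $\Omega_{\Br,+}$ be the left adjoint to $\b_{BD_1}$ viewed as a functor from almost commutative brace algebras to almost cocommutative bialgebras; then the counit $\Omega_{\Br,+}\b_{BD_1,+}A \to A$ \emph{is} a brace morphism for formal reasons, and the whole problem reduces to producing a filtered brace algebra quasi-isomorphism $\Omega_{BD_1,+}\b_{BD_1,+}A \to \Omega_{\Br,+}\b_{BD_1,+}A$. In the unfiltered setting this is the main theorem of Young's brace bar--cobar paper, and the filtered statement follows by checking that the explicit homotopy of Young's Theorem 3.3 preserves the filtrations $\beta F$ and $\beta^*F$. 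Your step 3 (reduction of the filtered quasi-isomorphism claim to the associated graded via Lemma \ref{barcobarprop1} and the $\dbar$-module formalism) is fine in spirit, but it can only be executed once one has the actual brace-level comparison map in hand; without citing or reconstructing that result (or an equivalent rectification), the proof is incomplete at its central point.
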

\begin{proof}
As in \cite{kadeishviliCobarBialg}, there is a natural brace algebra structure on $\Omega_+C$ for any bialgebra $C$; although stated there only for characteristic $2$, the proof holds more generally, as observed in \cite{youngBraceBar}.

We now show that when $C$ is almost cocommutative, the resulting brace algebra structure on  $\Omega_{BD_1,+}C$ is almost commutative.
 For $c \in C$, the brace operation
\[
 \{c\}\{-\}\co \Omega(C) \to \Omega(C)
\]
is defined by first taking the element $\sum_r \Delta^{(r)}c \in TC$, then applying the multiplication from $C$ internally within each subspace $C^{\ten r}$. Since $\Delta$ is almost cocommutative and $\Omega C$ almost commutative, it follows that when $c \in F_pC$, we get $\{c\}\{(\beta^*F)_i\Omega C\} \subset  (\beta^*F)_{i+p}\Omega C$. Equivalently, for $y \in (\beta^*F)_i\Omega C$, the map $\{-\}\{y\}$ sends $(\beta^*F)_{p}C= F_{p-1}C$ to $(\beta^*F)_{i+p-1}\Omega C$.

We automatically have $\{c\}\{\}_0=c$, and the higher braces $\{c\}\{-\}_n \co \Omega(C)^{\ten n} \to \Omega(C)$ are then set to be $0$ for $c \in C$, and extended to the whole of $\Omega C$ via the identities
\[
 \{xz\}\{y_1, \ldots, y_n\} = \sum_{i=0}^n \pm x\{y_1, \ldots, y_i\}z\{y_{i+1}, \ldots, y_n\}.
\]

In particular, this means that $\{-\}\{y\}$ is a derivation, so must map $(\beta^*F)_{p}\Omega C$ to $(\beta^*F)_{i+p-1}\Omega C$, since it does so on generators. We can then describe  higher braces  $\{-\}\{y_1, \ldots, y_n\}$ as the composition
\[
 \Omega(C) \xra{\Delta^{(n)}} \Omega(C)^{\ten n} \xra{\{-\}\{y_1\}\ten \ldots \ten \{-\}\{y_n\} }\Omega(C)^{\ten n} \to \Omega(C),
\]
the final map being given by multiplication. By the construction of $\beta^*$, the map $\Delta^{(n)}$ preserves the filtration $(\beta^*F)$, so for $y_i \in (\beta^*F)_{q_i}\Omega C$, we have
\[
 \{-\}\{y_1, \ldots, y_n\}\co (\beta^*F)_p\Omega(C)\to (\beta^*F)_{(p+q_1+ \ldots +q_n -n)}\Omega C,
\]
making  $\Omega_{BD_1,+}C$  almost commutative

Taking $C= \b_{BD_1,+}A$ gives an almost commutative brace algebra $\Omega_{BD_1,+}\b_{BD_1,+}A$ and an almost commutative DGAA quasi-isomorphism $\Omega_{BD_1,+}\b_{BD_1,+}A \to A$ by Lemma \ref{barcobarprop1}, but this is not a brace algebra morphism in general. If we let $\Omega_{\Br,+}$ be the left adjoint to $\b_{BD_1}$ as a functor from almost commutative brace algebras to almost cocommutative bialgebras, then it suffices to establish a filtered brace algebra quasi-isomorphism $\Omega_{BD_1,+}\b_{BD_1,+}A\to \Omega_{\Br,+}\b_{BD_1,+}A $. If we disregard the filtrations, this is the main result of \cite{youngBraceBar} (which refers to brace algebras as $\cS_2$-algebras), and the filtered case  follows by observing that the homotopy of \cite[Theorem 3.3]{youngBraceBar} preserves the respective filtrations.
\end{proof}

Combining Lemmas \ref{swisslemma} and \ref{barcobarprop2} gives:
\begin{proposition}\label{swissprop} 
For any morphism $\phi \co A \to D$  of almost commutative filtered DGAAs over  $R$, there is a canonical zigzag
\[
 \CCC_{R, BD_1}(A) \xla{\sim} \tilde{C} \to \CCC_{R, BD_1}(\CCC_{R, BD_1}(A,D))
\]
of almost commutative brace algebras over $R$, where the first map is a quasi-isomorphism.
\end{proposition}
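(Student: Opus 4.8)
The plan is to assemble the desired zigzag by concatenating the two morphisms produced by the preceding lemmas and inserting the cofibrant-type replacement $\Omega_{BD_1,+}\b_{BD_1,+}(-)$ as a mediating object. First I would apply Lemma~\ref{swisslemma} to the morphism $\phi\co A \to D$, obtaining a morphism of almost cocommutative bialgebras
\[
 \b_{BD_1,+}\CCC_{R,BD_1}(A) \to \b_{BD_1,+}\CCC_{R,BD_1}(\CCC_{R,BD_1}(A,D)).
\]
Applying the left adjoint $\Omega_{BD_1,+}$ (and using that $\Omega_{BD_1,+}$ is a functor on almost cocommutative bialgebras) yields a morphism
\[
 \Omega_{BD_1,+}\b_{BD_1,+}\CCC_{R,BD_1}(A) \to \Omega_{BD_1,+}\b_{BD_1,+}\CCC_{R,BD_1}(\CCC_{R,BD_1}(A,D))
\]
of almost commutative DGAAs. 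To upgrade this to a morphism of almost commutative \emph{brace} algebras, I would note that both $\CCC_{R,BD_1}(A)$ and $\CCC_{R,BD_1}(\CCC_{R,BD_1}(A,D))$ carry natural almost commutative brace structures by Lemma~\ref{HHaclemma2}, and that the construction in Lemma~\ref{swisslemma} factors through the brace-module structure; hence the induced map on $\Omega_{BD_1,+}\b_{BD_1,+}(-)$ is in fact a brace algebra morphism for the natural brace structure of Lemma~\ref{barcobarprop2}.

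Next I would invoke Lemma~\ref{barcobarprop2} twice. Since $A$ is almost commutative with $\gr_FA$ flat over $R$, the Hochschild complexes $\CCC_{R,BD_1}(A)$ and $\CCC_{R,BD_1}(A,D)$ (and hence the iterated Hochschild complex $\CCC_{R,BD_1}(\CCC_{R,BD_1}(A,D))$) have flat associated gradeds — this follows because $\gr^{\gamma F}\CCC_R(A,-)^{\#} = \HHom(\gr^{\beta F}\b A, \gr^F(-))^{\#}$ as in the proof of Lemma~\ref{HHaclemma}, and flatness is preserved by these Hom-constructions over the (graded) Poisson operad. Therefore Lemma~\ref{barcobarprop2} supplies zigzags of filtered quasi-isomorphisms of almost commutative brace algebras
\[
 \CCC_{R,BD_1}(A) \xla{\sim} \Omega_{BD_1,+}\b_{BD_1,+}\CCC_{R,BD_1}(A)
\]
and similarly with $\CCC_{R,BD_1}(A)$ replaced by $\CCC_{R,BD_1}(\CCC_{R,BD_1}(A,D))$. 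Setting $\tilde{C} := \Omega_{BD_1,+}\b_{BD_1,+}\CCC_{R,BD_1}(A)$, the composite
\[
 \CCC_{R,BD_1}(A) \xla{\sim} \tilde{C} \to \Omega_{BD_1,+}\b_{BD_1,+}\CCC_{R,BD_1}(\CCC_{R,BD_1}(A,D)) \xra{\sim} \CCC_{R,BD_1}(\CCC_{R,BD_1}(A,D))
\]
is the desired zigzag, after absorbing the intermediate zigzag of Lemma~\ref{barcobarprop2} on the right-hand side (so strictly $\tilde{C}$ is the total object of the concatenated diagram, or one simply records the longer but still finite zigzag).

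The main obstacle I expect is bookkeeping the brace-algebra structures compatibly through the adjunction: Lemma~\ref{swisslemma} only asserts a morphism of almost cocommutative \emph{bialgebras}, whereas Lemma~\ref{barcobarprop2} produces its brace structure via the left adjoint $\Omega_{BD_1,+}$ to the bar functor $\b_{BD_1}$ \emph{on brace algebras}, not via the DGAA-level adjoint. Reconciling these requires checking that the bialgebra morphism of Lemma~\ref{swisslemma} is compatible with the brace-algebra-to-bialgebra functor $\b_{BD_1}$ — equivalently, that the iterated Hochschild complex $\CCC_{R,BD_1}(\CCC_{R,BD_1}(A,D))$ receives the brace structure compatibly with the one on $\CCC_{R,BD_1}(A)$ via the $M_{\bt,\bt}$-action, which is precisely the content encoded in the brace-module picture of \cite[Definition 3.2, Proposition 4.2]{safronovPoissonRednCoisotropic} used in the proof of Lemma~\ref{swisslemma}. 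Once this compatibility is in place, the filtered quasi-isomorphism statement is immediate from Lemma~\ref{barcobarprop2}, since all objects involved have flat associated gradeds.
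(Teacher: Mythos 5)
Your proposal is correct and follows essentially the same route as the paper, whose proof is exactly "combine Lemma~\ref{swisslemma} with Lemma~\ref{barcobarprop2}": take $\tilde{C}=\Omega_{BD_1,+}\b_{BD_1,+}\CCC_{R,BD_1}(A)$ with its Kadeishvili brace structure, map it to $\CCC_{R,BD_1}(A)$ by the brace-algebra zigzag of Lemma~\ref{barcobarprop2}, and map it to $\CCC_{R,BD_1}(\CCC_{R,BD_1}(A,D))$ by applying $\Omega_{BD_1,+}$ to the bialgebra morphism of Lemma~\ref{swisslemma} and composing with the brace resolution map (rather than the DGAA counit, which is not a brace morphism). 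Your flatness digression is unnecessary for the statement as given, which only asserts a canonical zigzag of brace algebra morphisms and carries no flatness hypothesis (and Hom-complexes of flat modules need not be flat, so that side remark would not stand as written).
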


\begin{definition}\label{semidirectdef}
Given an  almost commutative  brace algebra $C$ over $R$, and an almost commutative DGAA $E$ over $R$ which is a left brace $C$-module compatibly with the filtrations, define the semidirect product $E_{[1]} \rtimes C$ to be the  almost commutative non-unital brace algebra given by the homotopy fibre product of the diagram
\[
 \tilde{C} \to \CCC_{R, BD_1}(E) \la \CCC_{R,BD_1,+}(E),
\]
for the brace algebra resolution $\tilde{C}$ of $C$ mapping to $\CCC_{R,BD_1}(E)$ via Lemma \ref{barcobarprop2} and  the proof of Lemma \ref{swisslemma}.
\end{definition}

\begin{remark}\label{swissrmk}
Observe that we have a natural morphism $ E_{[1]} \rtimes C \to C$ of non-unital  brace algebras, with homotopy fibre given by the homotopy kernel of $\CCC_{R,BD_1,+}(E) \to \CCC_{R,BD_1}(E) $. As a complex, this kernel is just $E_{[1]}$, and the underlying DGLA is just the DGLA underlying the DGAA $E$.  For more discussion of the map $\CCC_{R,+}(E) \to \CCC_R(E) $ of $E_2$-algebras, see \cite[\S 2.7]{kontsevichOperads}.

\end{remark}

The following lemma, although not essential for our constructions, greatly simplifies their interpretation.
\begin{lemma}\label{semidirectlemma}
 In the situation of Proposition \ref{swissprop}, the DGLA underlying the shifted brace algebra $(E_{[1]} \rtimes C)_{[-1]}$ is naturally quasi-isomorphic to the complex $\cone(C \to E)$ equipped with the Lie bracket
 \[
  [(c,e),(c',e')]:=(\{c\}\{c'\}_1\mp \{c'\}\{c\}_1, \{e\}\{c'\}_1\mp \{e'\}\{c\}_1 +ee'\mp e'e).
 \]
\end{lemma}
\begin{proof}
A routine check shows that this is indeed a DGLA.



We then observe that inclusion gives a canonical quasi-isomorphism $\CCC_{R,BD_1,+}(E)_{[-1]} \to \cone (\CCC_{R,BD_1}(E)\to E)$ of DGLAs, and that the natural map $\cone (\CCC_{R,BD_1}(E)\to E) \to  \CCC_{R,BD_1}(E)_{[-1]}$ is surjective, giving a model
\[
 \tilde{C}_{[-1]}\by_{\CCC_{R,BD_1}(E)_{[-1]}}\cone (\CCC_{R,BD_1}(E)\to E) \cong \cone(\tilde{C} \to E)
\]
for the DGLA underlying $(E_{[1]} \rtimes C)_{[-1]}$. This in turn is quasi-isomorphic to $\cone(C \to E) $ via the quasi-isomorphism $\tilde{C} \to C$.
 \end{proof}

\section{Defining quantisations for derived  co-isotropic structures}\label{affinesn}

In this section, we  develop a precise notion of  quantisation for derived co-isotropic structures in a stacky affine setting. 
Recall that we are fixing a CDGA $R$ over $\Q$.

\subsection{Stacky thickenings of derived affines}\label{stackyCDGAsn}
 
We now recall some definitions and lemmas from \cite[\S \ref{poisson-Artinsn}]{poisson}, as summarised in \cite[\S \ref{DQvanish-bicdgasn}]{DQvanish}. By default, we will  regard the  CDGAs in derived algebraic geometry   as chain complexes $\ldots \xra{\delta} A_1 \xra{\delta} A_0 \xra{\delta} \ldots$ rather than cochain complexes --- this will enable us to distinguish easily between derived (chain) and stacky (cochain) structures.  

\begin{definition}
A stacky CDGA is  a chain cochain complex $A^{\bt}_{\bt}$ equipped with a commutative product $A\ten A \to A$ and unit $\Q \to A$.  Given a chain  CDGA $R$, a stacky CDGA over $R$ is then a morphism $R \to A$ of stacky CDGAs. We write $DGdg\CAlg(R)$ for the category of  stacky CDGAs over $R$, and $DG^+dg\CAlg(R)$ for the full subcategory consisting of objects $A$ concentrated in non-negative cochain degrees.
\end{definition}

When working with chain cochain complexes $V^{\bt}_{\bt}$, we will usually denote the chain differential by $\delta \co V^i_j \to V^i_{j-1}$, and the cochain differential by $\pd \co V^i_j \to V^{i+1}_j$.
On a first reading, readers interested primarily in DM (as opposed to Artin) stacks may ignore the stacky part of the structure and consider only CDGAs  $A_{\bt}= A^0_{\bt}$ throughout this section.

\begin{example}\label{DstarBGex}
We now recall an important example  of a class of stacky CDGAs from \cite[Example \ref{poisson-DstarBG}]{poisson}.  
 Given a Lie algebra $\g$ of finite rank acting as derivations on a derived affine scheme $Y$, we write $O([Y/\g])$ for the stacky CDGA given by the
 Chevalley--Eilenberg double complex 
\[
  O(Y) \xra{\pd} O(Y)\ten \g^{\vee} \xra{\pd} O(Y)\ten \Lambda^2\g^{\vee}\xra{\pd} \ldots 
\]
of $\g$ with coefficients in the chain $\g$-module $O(Y)$. We think of this as a form of derived Lie algebroid.
\end{example}

\begin{definition}
 Say that a morphism $U \to V$ of chain cochain complexes is a levelwise quasi-isomorphism if $U^i \to V^i$ is a quasi-isomorphism for all $i \in \Z$. Say that a morphism of stacky CDGAs is a levelwise quasi-isomorphism if the underlying morphism of chain cochain complexes is so.
\end{definition}
There is a model structure on chain cochain complexes over $R$ in which weak equivalences are levelwise quasi-isomorphisms and fibrations are surjections --- this follows by identifying chain cochain complexes with the category $dg\Mod_{\bG_m}(R[\pd]/\pd^2)$ of \S \ref{filtrnsn}, for instance, for $\pd$ of chain degree $0$ and weight $1$, with $\pd^2=0$.

The following is \cite[Lemma \ref{poisson-bicdgamodel}]{poisson}:
\begin{lemma}\label{bicdgamodel}
There is a cofibrantly generated model structure on stacky CDGAs over $R$ in which fibrations are surjections and weak equivalences are levelwise quasi-isomorphisms. 
\end{lemma}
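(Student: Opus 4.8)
The plan is to obtain this structure by transfer along the free--forgetful adjunction $\Symm_R \dashv U$, where $U \co DGdg\CAlg(R) \to \cM$ is the forgetful functor to the category $\cM = dg\Mod_{\bG_m}(R[\pd])$ of chain cochain complexes over $R$ and $\Symm_R$ is the free graded-commutative $R$-algebra functor (with Koszul signs for the total grading). The input is the cofibrantly generated model structure on $\cM$ recalled above, whose set $J$ of generating trivial cofibrations may be taken to consist of maps $0 \to E$ with $E$ contractible. Since $DGdg\CAlg(R)$ is the category of algebras for the finitary monad $\Symm_R$ on the locally presentable category $\cM$, it is complete, cocomplete and locally presentable, and $U$ preserves filtered colimits; so all objects are small and both $\Symm_R I$ and $\Symm_R J$ permit the small object argument. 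By Kan's transfer theorem it then remains only to check the acyclicity condition: that $U$ carries every relative $\Symm_R J$-cell complex to a levelwise quasi-isomorphism.

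This acyclicity step is the crux, and the one place where the hypothesis $\Q \subseteq R$ is essential; I would argue it as follows. The domain of each $\Symm_R(j)$, $j \in J$, is the initial object $\Symm_R(0) = R$, so a pushout of any coproduct of such maps is necessarily attached along the structure morphism $R \to B$ of a stacky CDGA $B$, and hence has the form
\[
 B \lra B \ten_R \Symm_R(E) = \bigoplus_{n \ge 0} B\ten_R \Symm^n_R(E),
\]
with $E$ again contractible. Since the $n$-fold tensor product over $R$ preserves chain homotopy equivalences, $E^{\ten_R n}$ is contractible; and in characteristic zero the symmetrising idempotent $\frac{1}{n!}\sum_{\sigma \in S_n} \sigma$ (the $\sigma$ acting with Koszul signs) is an idempotent endomorphism of $E^{\ten_R n}$ in $\cM$, with image $\Symm^n_R(E)$, exhibiting $\Symm^n_R(E)$ as a retract of a contractible object and hence itself contractible for $n \ge 1$; therefore so is each $B\ten_R\Symm^n_R(E)$. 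Thus $B \to B\ten_R\Symm_R(E)$ is the inclusion of the retract $B = B\ten_R\Symm^0_R(E)$ with contractible complement, so it is a levelwise quasi-isomorphism. As levelwise quasi-isomorphisms are closed under transfinite composition (homology commutes with filtered colimits), every relative $\Symm_R J$-cell complex is a levelwise quasi-isomorphism, which is the required condition. Kan's theorem then produces the cofibrantly generated model structure, with generating (trivial) cofibrations $\Symm_R I$ (resp.\ $\Symm_R J$), fibrations the maps $f$ with $Uf$ surjective --- i.e.\ the surjections of stacky CDGAs --- and weak equivalences the levelwise quasi-isomorphisms.

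I expect the acyclicity step above to be the only real difficulty: the rest is formal, but the whole argument hinges on $\Symm^n_R$ preserving contractibility, which fails outside characteristic zero. Should one wish to avoid the symmetrising idempotent, an alternative verification of acyclicity runs through Quillen's path-object criterion: every object of $DGdg\CAlg(R)$ is fibrant (the terminal object is the zero algebra, and every morphism to it is surjective), and there is a functorial path object $A \mapsto A\ten_\Q\Omega_{\dR}(\Delta^1)$, since $\Q \into \Omega_{\dR}(\Delta^1)$ is a quasi-isomorphism (giving a levelwise weak equivalence $A \to A\ten_\Q\Omega_{\dR}(\Delta^1)$ after the exact functor $A\ten_\Q(-)$) while the two evaluation maps yield a levelwise surjection $A\ten_\Q\Omega_{\dR}(\Delta^1) \onto A\by A$ factoring the diagonal.
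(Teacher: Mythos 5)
Your proof is correct and is essentially the paper's own argument: the paper gives no proof here, simply citing \cite[Lemma \ref{poisson-bicdgamodel}]{poisson}, where the result is obtained by exactly this kind of transfer along the adjunction $\Symm_R \dashv U$ from the projective-type model structure on chain cochain complexes, with characteristic zero entering precisely as you describe (via $\Symm^n$ preserving acyclicity, or equivalently the fibrancy-plus-path-object argument using de Rham forms on $\Delta^1$). Nothing further is needed.
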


There is a  denormalisation functor $D$ from non-negatively graded CDGAs to cosimplicial algebras, combining Dold--Kan denormalisation of a cochain complex with the Eilenberg--Zilber shuffle product (for an explicit description, see \cite[Definition \ref{ddt1-nabla}]{ddt1};  it has a  
 left adjoint $D^*$, described explicitly in \cite[Definition \ref{DQDG-Dstardef}]{DQDG}. 
Given a cosimplicial  CDGA $A$, $D^*A$ is then a stacky CDGA in non-negative cochain degrees. By  \cite[Lemma \ref{poisson-Dstarlemma}]{poisson}, $D^*$ is a left Quillen functor from the Reedy model structure on cosimplicial CDGAs to the model structure of Lemma \ref{bicdgamodel}.

Since   $DA$ is a pro-nilpotent extension of $A^0$, when $\H_{<0}(A)=0$ we think of the simplicial hypersheaf  $\oR \Spec DA$ as a stacky derived thickening of the derived affine scheme $\oR \Spec A^0$.  Stacky CDGAs  arise as formal completions of derived Artin $N$-stacks along affine atlases, as in \cite[\S \ref{poisson-stackyCDGAsn}]{poisson}. When $X$ is a  $1$-geometric derived Artin stack (i.e. has affine diagonal), the formal completion of a smooth affine $1$-atlas $U \to X$ is given by the relative de Rham complex
\[
 O(U) \xra{\pd} \Omega^1_{U/X} \xra{\pd} \Omega^2_{U/X}\xra{\pd}\ldots ,
\]
which arises by applying the functor $D^*$  
to the \v Cech nerve of $U$ over $X$. The construction of Example \ref{DstarBGex} is the special case of this construction corresponding to the atlas $Y \to [Y/G]$ when $G$ is an algebraic group with Lie algebra $\g$. 

By \cite[Theorem \ref{stacks2-bigthm} and Corollary \ref{stacks2-Dequivcor}]{stacks2}, every strongly quasi-compact derived Artin $N$-stack $\fX$ admits a  simplicial resolution $X_{\bt}$ by derived affines of a special form, called a DG Artin hypergroupoid. We then have an associated stacky CDGA $D^*O(X)$, which we can think of as a formal completion of $\fX$ along $X_0$, and more generally stacky CDGAs $D^*O(X^{\Delta^j})$, regarded as completions along the derived affine schemes $X_j$. 

The following is \cite[Corollary \ref{poisson-gooddescent}]{poisson}, showing that a derived Artin stack can be recovered from the stacky CDGAs $D^*O(X^{\Delta^j}) $; this should be thought of as a resolution by derived Lie algebroids.
\begin{lemma}\label{gooddescent}
For any simplicial presheaf  $F$ on $DG\Aff(R)$ and any Reedy fibrant simplicial derived affine $X$, there is a canonical weak equivalence 
\[
 \ho \Lim_{j\in\Delta} \map( \Spec DD^*O(X^{\Delta^j}), F) \to \map (X, F).
\]
\end{lemma}

\begin{definition}\label{hfetdef}
 A morphism  $A \to B$ in $DG^+dg\CAlg(R)$ is said to be  homotopy formally \'etale when the map
\[
 \{\Tot \sigma^{\le q} (\oL\Omega_{A}^1\ten_{A}^{\oL}B^0)\}_q \to \{\Tot \sigma^{\le q}(\oL\Omega_{B}^1\ten_B^{\oL}B^0)\}_q
\]
on the systems of brutal cotruncations is a pro-quasi-isomorphism (i.e. an essentially levelwise quasi-isomorphism in the sense of \cite[\S 2.1]{isaksenStrict}), where $\sigma^{\le q}$ denotes the brutal cotruncation
\[
 (\sigma^{\le q}M)^i := \begin{cases} 
                         M^i & i \ge q, \\ 0 & i<q.
                        \end{cases}
\]
\end{definition}

In particular, as in \cite[\S \ref{poisson-bidescentsn}]{poisson}, for a derived Artin hypergroupoid $X$ the maps $\pd^i \co D^*O(X^{\Delta^j}) \to D^*O(X^{\Delta^{j+1}})$ and $\sigma^i \co   D^*O(X^{\Delta^{j+1}})\to D^*O(X^{\Delta^j})$ are homotopy formally \'etale. Thus $D^*O(X^{\Delta^{\bt}})$ can be thought of as a DM hypergroupoid in stacky CDGAs, and Lemma \ref{gooddescent} allows us to think of this as a resolution for $\fX$. For a more refined statement, see  \cite[Theorem \ref{smallet-mainthm}]{smallet}. 



The constructions of \S \ref{centresn} all adapt to chain cochain complexes, by just regarding the cochain structure as a $\bG_m$-equivariant  $\Q[\pd]/\pd^2$-module structure; quasi-isomorphisms are only considered in the chain direction. We refer to  associative (resp. brace) algebras in chain cochain complexes as stacky DGAAs (resp.  stacky brace algebras), and have the obvious notions of almost commutativity for filtered stacky DGAAs and filtered stacky brace algebras. We define bar constructions $\b$ generalising Definition \ref{bardef}  by taking shifts  exclusively in the chain direction.

\begin{definition}\label{HHdefa}
 For a stacky DGAA $A$ over $R$ and an $A$-bimodule $M$ in chain cochain complexes, 
 we define the internal cohomological  Hochschild complex $\C\C_R(A,M)$ to be the chain cochain subcomplex of 
\[
 \cHom_R(\b A, \b(A \oplus M_{[1]})) 
\]
 consisting of  coderivations extending the zero derivation  on $\b A$, where the algebra structure on $A \oplus M_{[1]}$ is defined so that $M_{[1]}$ is a square-zero ideal. 

Since a coderivation is determined by its value on cogenerators, the complex $\C\C_R(A,M)$ is given explicitly by
\[
 \C\C_R(A,M)_{\#}:=\prod_n \cHom_R( A^{\ten n}, M)_{[n]}, 
\]
with chain differential $\delta \pm b$, for the 
Hochschild differential $b$ given by
\begin{align*}
 (b f)(a_1, \ldots , a_n) = &a_1 f(a_2, \ldots, a_n)\\
 &+ \sum_{i=1}^{n-1}(-1)^i f(a_1, \ldots, a_{i-1}, a_ia_{i+1}, a_{i+2}, \ldots, a_n)\\
&+ (-1)^n f(a_1, \ldots, a_{n-1})a_n.
\end{align*} 

We simply write $\C\C_R(A)$ for $\C\C_R(A,A)$. 

When $(A,F)$ is almost commutative and $(M,F)$ is a filtered $A$-bimodule for which the left and right $\gr^FA$-module structures on $\gr^FM$ agree, we define the filtered chain cochain complex
\[
 \C\C_{R, BD_1}(A,M)
\]
by endowing  $\C\C_R(A,M)$ with the filtration $\gamma F$ of Definition \ref{HHdef0}, and completing with respect to it. 
\end{definition}

\subsection{Differential operators}

We now fix a  stacky CDGA $B$ over a  CDGA $R$, and recall the definitions of differential operators from \cite[\S \ref{DQvanish-biquantsn}]{DQvanish}.

\begin{definition}\label{Diffdef}
Given  $B$-modules $M,N$ in chain  cochain complexes, inductively define the 
filtered chain cochain complex $\cDiff(M,N)= \cDiff_{B/R}(M,N)\subset \cHom_R(M,N)$ of differential operators from $M$ to $N$  by setting
\begin{enumerate}
 \item $F_0 \cDiff(M,N)= \cHom_B(M,N)$,
\item $F_{k+1} \cDiff(M,N)=\{ u \in \cHom_R(M,N)~:~  [b,u]\in F_{k} \cDiff(M,N)\, \forall b \in B \}$, where $[b,u]= bu- (-1)^{\deg b\deg u} ub$.
\item $\cDiff(M,N)= \LLim_k F_k\cDiff(M,N)$.
\end{enumerate}
We simply write $\cDiff_{B/R}(M):= \cDiff_{B/R}(M,M)$.

We then define the filtered cochain complex $\hat{\Diff}(M,N)= \hat{\Diff}_{B/R}(M,N)\subset \hat{\HHom}_R(M,N)$ by $\hat{\Diff}(M,N):= \LLim_k \hat{\Tot} F_k\cDiff(M,N)$.
\end{definition}

\begin{definition}
Given  a $B$-module $M$ in chain cochain complexes, write $\sD(M)= \sD_{B/R}(M):= \hat{\Diff}_{B/R}(M,M)$,   which we regard as a  sub-DGAA of $\hat{\HHom}_R(M,M)$. We simply write $\sD_B= \sD_{B/R}$ for  $\sD_{B/R}(B,B)$ and $\cDiff_{B/R}$ for $\cDiff_{B/R}(B,B)$.
\end{definition}

The definitions ensure that the associated gradeds $\gr^F_k\cDiff_B(M,N)$ have the structure of $B$-modules. As in \cite{DQvanish},  there are   maps
\[
 \gr^F_{k} \cDiff(M,N) \to \cHom_B(M\ten_B\CoS^k_B\Omega^1_B,N)
\]
for all $k$, which are isomorphisms when $B$ is cofibrant. [Here,  $\CoS$ denotes cosymmetric powers, as in the notation section.]

The following is \cite[Definition \ref{DQvanish-bistrictlb}]{DQvanish}:
\begin{definition}\label{bistrictlb}
 Define a strict line bundle over $B$ to be a $B$-module $M$ in chain cochain complexes such that $M^{\#}_{\#}$ is a projective module of rank $1$ over the bigraded-commutative algebra $B^{\#}_{\#}$ underlying $B$. 
\end{definition}

The motivating examples of  strict line bundles, and the only ones we will need to consider for our applications in \S \ref{lbsn}, are  the double complexes $B_c$   defined as follows. Given $c \in \z^1\z_0B$, we just set $B_c^{\#}$ to be the $B$-module  $B^{\#}$ (so the  chain differential is still  $\delta$), and then we set the cochain differential to be $\pd +c$.

\subsection{Relative quantised polyvectors}

\begin{definition}\label{QPoldef}
Given a morphism $\phi \co A\to B$ of cofibrant stacky CDGAs  over $R$ and a strict line bundle $M$ over $B$,  we define the DGLA $Q\widehat{\Pol}(A,M;0)^{[1]}$ of $0$-shifted relative quantised polyvectors as follows. By giving $A$ and $B$ trivial filtrations, we have a composite morphism
\[
 A \to B \to \cDiff_{B/R}(M)
\]
of almost commutative stacky DGAAs.

Definition \ref{semidirectdef} and Proposition \ref{swissprop} then adapt to double complexes to give us a non-unital almost commutative stacky brace algebra
\[
 \C:= \C\C_{R,BD_1}(A, \cDiff_{B/R}(M))_{[1]} \rtimes \C\C_{R,BD_1}(A), 
\]
and we then form the complex
 \[
 Q\widehat{\Pol}_R(A,M;0) := \prod_{p \ge 0} \hat{\Tot}(\gamma F)_p \C\hbar^{p-1},
 \]
 with  $Q\widehat{\Pol}_R(A,M;0)^{[1]}$ becoming a DGLA  with bracket given by the commutator of the brace $\{-\}\{-\}_1$, closure under this operator following from the definition of the filtration $\gamma$ in Definition \ref{acbracedef}.

We define filtrations $\tilde{F}$ and $G$ on $Q\widehat{\Pol}_R(A,M;0)$ by
\begin{align*}
 \tilde{F}^iQ\widehat{\Pol}_R(A,M;0) &:= \prod_{p \ge i}\hat{\Tot}(\gamma F)_p\C\hbar^{p-1},\\
G^jQ\widehat{\Pol}_R(A,M;0) &:= Q\widehat{\Pol}_R(A,M;0)\hbar^j.
\end{align*}
\end{definition}
Note that almost commutativity of the brace algebra $\C$  implies that $[\tilde{F}^iQ\widehat{\Pol}, \tilde{F}^jQ\widehat{\Pol}] \subset \tilde{F}^{i+j-1}Q\widehat{\Pol}$ and $[G^iQ\widehat{\Pol}, G^jQ\widehat{\Pol}]\subset G^{i+j}Q\widehat{\Pol}$.

\begin{remark}\label{extremecasesrmk}

 When $B=0$, observe that $\sD_{B/R}=0$, so we just have $Q\widehat{\Pol}_R(A,0;0) \simeq \prod_{p \ge 0}(\hat{\Tot} \gamma_p\C\C_{R,BD_1}(A)\hbar^{p-1})$, which admits a filtered quasi-isomorphism from the complex $Q\widehat{\Pol}_R(A,0)$ of  $0$-shifted quantised polyvectors from \cite[Definition \ref{DQnonneg-qpoldef}]{DQnonneg} as in Remark \ref{HKRrmk}.

By Remark \ref{swissrmk}, there is always a projection $ Q\widehat{\Pol}_R(A,M;0)[1]\to Q\widehat{\Pol}_R(A,0;0)[1]$, and the homotopy fibre over $0$ is equivalent to the filtered $L_{\infty}$-algebra underlying the DGAA $Q\widehat{\Pol}_A(M,-1):= \prod_{p \ge 0}F_p\sD_{B/A}(M)\hbar^{p-1}$ of \cite[Definition \ref{DQvanish-qpoldef}]{DQvanish} when $B$ is cofibrant over $A$. The equivalence follows  because the HKR isomorphism for $A$ ensures that $ \cDiff_{B/A} \to \C\C_R(A,  \cDiff_{B/R})$  is a filtered quasi-isomorphism. 

\end{remark}

The following is standard:
\begin{definition}\label{mcPLdef}
 Given a   DGLA $L$, define the the Maurer--Cartan set by 
\[
\mc(L):= \{\omega \in  L^{1}\ \,|\, d\omega + \half[\omega,\omega]=0 \in   L^{2}\}.
\]

Following \cite{hinstack}, define the Maurer--Cartan space $\mmc(L)$ (a simplicial set) of a nilpotent  DGLA $L$ by
\[
 \mmc(L)_n:= \mc(L\ten_{\Q} \Omega^{\bt}(\Delta^n)),
\]
where 
\[
\Omega^{\bt}(\Delta^n)=\Q[t_0, t_1, \ldots, t_n,\delta t_0, \delta t_1, \ldots, \delta t_n ]/(\sum t_i -1, \sum \delta t_i)
\]
is the commutative dg algebra of de Rham polynomial forms on the $n$-simplex, with the $t_i$ of degree $0$.

Given a pro-nilpotent  DGLA $L= \Lim_i L_i$, define $\mmc(L):= \Lim_i \mmc(L_i)$.
\end{definition}

\begin{definition}\label{QPdef}
Given a morphism $\phi \co A\to B$ of cofibrant stacky CDGAs  over $R$ and a strict line bundle $M$ over $B$, define the space $Q\cP(A,M;0)$ of  $0$-shifted quantisations (or  of quantised  co-isotropic structures) for the pair $(A,M)$ to be the space
\[
 \mmc(\tilde{F}^2Q\widehat{\Pol}(A,M;0)^{[1]})
\]
of Maurer--Cartan elements of the pro-nilpotent DGLA $\tilde{F}^2Q\widehat{\Pol}(A,M;0)^{[1]}$.
\end{definition}

Replacing $\tilde{F}^2Q\widehat{\Pol}(A,M;0)$ with its quotient by $G^k$ gives a space $ Q\cP(A,M;0)/G^k$. On the quotient $Q\cP(A,M;0)/G^1$, the choice of line bundle $M$ does not affect the space, since $\gr^F\sD_{B/R}(M)\cong \gr^F\sD_{B/R}$;  we then refer to $\cP(A,B;0) :=Q\cP(A,M;0)/G^1$ 
as  the space of $0$-shifted co-isotropic structures on $A \to B$, in the sense that $A$ carries a $0$-shifted Poisson structure with respect to which $B$ is co-isotropic.

\begin{remark}\label{curvedrmk}
Expanding out the definitions, a quantised co-isotropic structure, i.e.  an element of  $Q\cP(A,M;0)$, is a Maurer--Cartan element of the pro-nilpotent DGLA
\[
 \tilde{F}^2Q\widehat{\Pol}_R(A,M;0) = \prod_{p \ge 2}\hat{\Tot}(\gamma F)_p (\C\C_{R,BD_1}(A, \cDiff_{B/R}(M))_{[1]} \rtimes \C\C_{R,BD_1}(A))_{[-1]} \hbar^{p-1},
\]
and Lemma \ref{semidirectlemma} allows us to express  this semidirect product construction in terms of a DGLA structure on the cone.

Projection to the second factor gives a natural map $Q\cP(A,M;0) \to Q\cP(A,0;0) $, and the homotopy fibre over an element $\Delta_A$ is then given by the Maurer--Cartan space of the filtered DGLA underlying the DGAA
\[
 \prod_{p \ge 2}\hat{\Tot}(\gamma F)_p (\C\C_{R,BD_1}(A, \cDiff_{B/R}(M)), \delta \pm \{-\}\{\Delta_A\}_1).
\]

The term $\Delta_A \in \hat{\Tot}\C\C_{R,BD_1}(A)\brh $ gives rise to a curved almost commutative $A_{\infty}$-deformation $\tilde{A}$ of $\hat{\Tot}A$ over $R\llbracket \hbar \rrbracket$, via the canonical map
\[
\hat{\Tot}\C\C_{R,BD_1}(A)^{[1]} \to\CCC_{R,BD_1}( \hat{\Tot}A)^{[1]}.
\]
The $A_{\infty}$-structure consists of operations $m_n \co \hat{\Tot}A^{\ten n} \to \hat{\Tot}A_{[n-2]} \brh$ for all $n \ge 0$ deforming the multiplication on $A$, with $m_1$ being the differential and $m_0$ the curvature; these satisfy higher associativity conditions.

The remaining term  $\Delta_M \in \hat{\Tot}(\C\C_{R,BD_1}(A, \cDiff_{B/R}(M)) , \delta \pm \{-\}\{\Delta_A\}_1)$ then gives rise to the data of a curved almost commutative $A_{\infty}$-morphism $\tilde{A} \to \hat{\Tot}\cDiff_{B/R}(M)\llbracket \hbar \rrbracket$ deforming the map $\hat{\Tot} A \to \hat{\Tot}\cDiff_{B/R}(M)$, via the canonical map
\[
 \hat{\Tot}\C\C_{R,BD_1}(A, \cDiff_{B/R}(M)) \to \CCC_{R,BD_1}(\hat{\Tot}A, \hat{\Tot}\cDiff_{B/R}(M)).
\]
The $A_{\infty}$-morphism $\Delta_M$ consists  of maps $f_n \co \hat{\Tot}\tilde{A}^{\ten_{R\brh} n} \to  \hat{\Tot}\cDiff_{B/R}(M)_{[n-1]}\llbracket \hbar \rrbracket$ for all $n \ge 0$ deforming the composite $A \to B \to \cDiff_{B/R}(M)$, with $f_0$ deforming the differential $\delta$. These satisfy compatibility  conditions with the respective $A_{\infty}$-structures; in particular, $(f_0)^2=\sum_n f_n(m_0, \ldots, m_0)$,
so this gives $\tilde{M}:=(\hat{\Tot}M\brh,\pd \pm \delta +f_0)$ the structure of a curved $\tilde{A}$-module. 

However, there are additional restrictions on the resulting deformations: if we filter $\hat{\Tot} V$ by setting  $\Fil^p \hat{\Tot}V := \Tot^{\Pi} V^{\ge p}$, then each component of the $A_{\infty}$-structure  $m$ or $A_{\infty}$-morphism  $f$ must be bounded in the sense that for some integer $r$,  each $\Fil^p$ is mapped to $\Fil^{p+r}$. When the stacky CDGAs are bounded in the cochain direction, as occurs when they originate from $1$-geometric derived Artin stacks, these boundedness restrictions are vacuous (cf. \cite[Example \ref{DQnonneg-quantex}]{DQnonneg}), but there are still restrictions arising because  $\hat{\Tot}$ does not preserve cofibrant objects. 
\end{remark}

\begin{remark}\label{curvedrmk2}
There are similar descriptions for the quotient spaces $ Q\cP(A,M;0)/G^k$ given by truncating the structures. In particular, a $0$-shifted co-isotropic structure, i.e. an element of  $\cP(A,B;0)$, is a Maurer--Cartan element of the pro-nilpotent DGLA
\begin{align*}
 \tilde{F}^2Q\widehat{\Pol}_R(A,M;0)/G^1 &= \prod_{p \ge 2}\hat{\Tot}\gr^{\gamma F}_p (\C\C_{R,BD_1}(A, \cDiff_{B/R}(M))_{[1]} \rtimes \C\C_{R,BD_1}(A))_{[-1]} \hbar^{p-1}.
 \end{align*}

The term $\Delta_A \in \hat{\Tot}\gr^{\gamma}\C\C_{R,BD_1}(A) $ gives rise to a strong homotopy  Poisson algebra structure $\varpi$ on $\hat{\Tot}A$ extending its commutative algebra structure. 

As in \cite{DQvanish}, we have  isomorphisms $\gr^F_j\cDiff_{B/R}(M)\cong \cHom_B( \CoS_B^j\Omega^1_{B/R},B)$, so $ \prod_p \hat{\Tot}\gr^F_p\cDiff_{B/R}(M)$ is the $P_1$-algebra $\widehat{\Pol}(B,-1)$ of $(-1)$-shifted polyvectors. 
The 
remaining term  $\Delta_B \in \hat{\Tot}\gr^{\gamma F}\C\C_{R,BD_1}(A, \cDiff_{B/R}(M))$ then gives rise to the data of a Maurer--Cartan element $\pi \in \widehat{\Pol}(B,-1)$, i.e. a $(-1)$-shifted Poisson structure in the sense of \cite{poisson}, together with the data of a s.h. Poisson algebra morphism
\[
 A_{\varpi} \to T_{\pi}\widehat{\Pol}(B,-1),
\]
where $T_{\pi}\widehat{\Pol}(B,-1)$ is defined in the same way as $\widehat{\Pol}(B,-1)$, but with chain differential $\delta +[\pi,-]$.

Note that this is essentially the approach the formulation of co-isotropic structures (for all shifts) proposed by Costello and Rozenblyum, and developed by Melani and Safronov in \cite{melanisafronovI,melanisafronovII} after this paper was first written.

As in the quantised case, there are additional restrictions on the resulting deformations in terms of bounds in the cochain direction. 

\end{remark}

\begin{examples}\label{curvedex}
Here is how the descriptions of quantised co-isotropic structures simplify in settings where we need not worry about the subtleties resulting from $\hat{\Tot}$:
\begin{enumerate}
\item \label{curvedexsmooth}
If $A$ and $B$ are smooth $R$-algebras concentrated in degrees $(0,0)$, then objects of $Q\cP(A,M;0)$  just correspond to  $R\brh$-deformations $\tilde{A}$ of $A$ as an associative algebra, equipped with $R\brh$-algebra homomorphisms $\phi\co \tilde{A}\to \prod_{p \ge 0}F_p\cDiff_{B/R}(M)\hbar^p=:\hat{\xi}(\cDiff_{B/R}(M),F)$ deforming the composite $A \to B=F_0\cDiff_{B/R}(M)$. 

However, curvature does manifest itself on the level of morphisms, with an isomorphism between two objects $(\tilde{A},\phi), (\tilde{A}',\phi')$ consisting of an isomorphism $ \theta \co\tilde{A}\cong  \tilde{A}'$ deforming $\id_A$, together with an element of
\begin{eqnarray*}
 &\exp(\prod_{p \ge 1}F_{p+1}\cDiff_{B/R}(M)\hbar^{p})&\\
&=\{g \in 1+ \hbar \cDiff_{B/R}(M)\brh ~:~ g \hat{\xi}(\cDiff_{B/R}(M),F)g^{-1} \subset \hat{\xi}(\cDiff_{B/R}(M),F) \}&
\end{eqnarray*} 
intertwining  $\phi'\circ \theta$ and $\phi$. 

There are also $2$-isomorphisms in the form of elements of $1+ \hbar \tilde{A}=\exp(\hbar\tilde{A})$ intertwining  $1$-morphisms in the obvious way. The space  $Q\cP(A,M;0)$ is then equivalent to the nerve of this $2$-groupoid. 

\item \label{curvedexsmooth2} In the special case of the previous example where the morphism $A \to B$ is surjective, we can relax a condition by just requiring that $\phi\co \tilde{A}\to \End_R(M)\brh$, since almost commutativity of $\tilde{A}$ then combines with surjectivity to guarantee that the image of $\tilde{A}$ is contained in  $\hat{\xi}(\cDiff_{B/R}(M),F)$. 

If in addition the map $\tilde{A}\to \hat{\xi}(\cDiff_{B/R}(M),F)$ is surjective, as happens when the underlying Poisson structure is non-degenerate, then we may also relax the condition on the intertwiner $g$ to say that $g \in 1+\hbar \End_R(M)\brh$, since it must automatically then lie in $\exp(\prod_{p \ge 1}F_{p+1}\cDiff_{B/R}(M)\hbar^{p})$. For such fixed $\tilde{A}$, this means that the space  $Q\cP(A,M;0)\by^h_{Q\cP(A,0)}\{\tilde{A}\}$ of quantisations lifting $\tilde{A}$ is equivalent to the nerve of the groupoid of $\tilde{A}$-modules deforming the $A$-module $M$.

\medskip
 \item\label{curvedexDM}
 We can generalise (\ref{curvedexsmooth}) to consider the case where  $A$ and $B$ are functions on derived affine schemes,   so $A=A^0_{\bt}$ and $B=B^0_{\bt}$ are chain complexes concentrated in non-negative degrees.  Again, curvature does not manifest itself in deformations of $A$, but  the curved $A_{\infty}$-morphism $f\co \tilde{A}\to \prod_{p \ge 0}F_p\cDiff_{B/R}(M)\hbar^p $ includes  a Maurer--Cartan element $f_0$ in the target.

In this setting, we may use the bar-cobar adjunction $\Omega_{BD_1} \dashv \b_{BD_1}$ to replace $A_{\infty}$-structures with genuinely associative structures. 
Explicitly, an element of $Q\cP(A;0)$  corresponds to a flat $\hbar$-adically complete DGAA $A'$ over $R\brh$ which is commutative modulo $\hbar$ and equipped with a quasi-isomorphism $A'/\hbar \simeq A$. Beware that the DGAA $A'$ must be genuinely commutative modulo $\hbar$ --- a quasi-isomorphism $A'/\hbar \simeq A$ alone does not suffice to make $A'$ almost commutative.

An element of $Q\cP(A,M;0)$ then combines this with a Maurer--Cartan element 
  $f_0 \in \prod_{p \ge 1}F_p\cDiff_{B/R}(M)_{-1}\hbar^{p-1}$ deforming $\delta$, together with an $R\brh$-algebra homomorphism $f_+\co A'\to (\prod_{p \ge 0}F_p\cDiff_{B/R}(M)\hbar^p, \delta \ad_{f_0})$.

As in \cite[Remark \ref{DQvanish-BVrmk}]{DQvanish} and \cite[Definition 9]{kravchenko}, the term $f_0$ is providing a twisted version of a $BD_0$-algebra structure (or $BD$-algebra structure in the terminology of \cite[\S 2.4]{CostelloGwilliamVol2}). Explicitly, if $A=M$ and $f_0(1)=0$, then $f_0$ is precisely  a commutative $BV_{\infty}$-algebra in the sense of \cite{BraunLazarevHtpyBV} 
(call this a $BD_{0,\infty}$-algebra to disambiguate). In particular, this means that any local  generators for $M$ in the kernel of $f_0$ give rise locally to  quasi-isomorphisms to $BD_0$-algebras. 

[Explicitly, using \cite[Proposition 2]{kravchenko}  we can characterise a $BD_{0,\infty}$-algebra as being a shifted  $L_{\infty}$-algebra structure $([-]_n)_{n \ge 1}$ over $R\brh$, where $[-]_1=f_0$ , together with a graded-commutative product satisfying 
\begin{align*}
[a_1, \ldots, a_{n-1},bc]_n= &[a_1, \ldots, a_{n-1},b]_nc \pm b[a_1, \ldots, a_{n-1},c]_n\\ 
&+\hbar[a_1, \ldots, a_{n-1},b,c]_{n+1}.
\end{align*}
To see that the canonical map from the $BD_{0,\infty}$ operad to the $BD_0$ operad is indeed a filtered quasi-isomorphism, just observe that on reducing modulo $\hbar$ it becomes the resolution $\Com \circ s\Lie_{\infty}\to \Com \circ s\Lie$ of the $P_0$ operad.]
  
The spaces of morphisms in the $\infty$-groupoid   $Q\cP(A,M;0)$ then have contributions from intertwiners generalising the underived situation, in addition to the usual higher homotopies given by localising at quasi-isomorphisms. 
\medskip
\item\label{curvedexsmoothstacky} 
Generalising in the opposite direction, we can consider the case where there is stacky structure but no derived structure, so $A=A_0^{\bt}$ and $B=B_0^{\bt}$ are cochain complexes concentrated in non-negative degrees, with $A_0^0,B_0^0$ smooth and $A_0^{\#},B_0^{\#}$ freely generated over them by graded projective modules. Then an element of $Q\cP(A;0)$ can be described as an almost commutative curved $A_{\infty}$-deformation $\tilde{A}$ of $A$, which can more precisely be encoded as a deformation $\tilde{C} $ of the coderivation on the $\hbar$-adic completion of the $R[\hbar]$-DGAC  
$ \xi(\b_{BD_1}A)$. An element of $Q\cP(A,M;0)$ combines this with the structure of an almost commutative curved $A_{\infty}$-morphism $\tilde{A} \to \prod_{p \ge 0}F_p\cDiff_{B/R}(M)\hbar^p$, which can more precisely be characterised as a dg coalgebra  morphism from $\tilde{C}$  to the $\hbar$-adic completion of  $ \xi(\b_{BD_1}(\cDiff_{B/R}(M),F ))$. 

In this setting, curvature manifests itself immediately on the level of objects, so the structural differentials $m_1$ and $f_0$ lifting $\delta$  in our induced deformations of $A$ and $M$ need not square to $0$; in particular, this means that $f_0$ need not define a $(-1)$-shifted quantisation of the line bundle $M$ on $B$ in the sense of \cite{DQvanish}. Also beware that the nature of the model structure we chose in Lemma \ref{bicdgamodel} means that these constructions are not invariant under cochain quasi-isomorphisms, in contrast to the situation with chain quasi-isomorphisms in the purely derived setting above. 
\end{enumerate}

\end{examples}

\begin{definition}\label{TQpoldef0}
 Define the filtered tangent space to relative quantised polyvectors by 
\begin{align*}
 TQ\widehat{\Pol}(A,M;0)&:= Q\widehat{\Pol}(A,M;0)\oplus \hbar Q\widehat{\Pol}_R(A,M;0)\eps,\\
\tilde{F}^jTQ\widehat{\Pol}(A,M;0)&:= \tilde{F}^jQ\widehat{\Pol}(A,M;0)\oplus \hbar\tilde{F}^j Q\widehat{\Pol}(A,M;0) \eps,
\end{align*}
for $\eps$ of degree $0$ with $\eps^2=0$. Then  $TQ\widehat{\Pol}(A,M;0)^{[1]} $ is a DGLA, with  Lie bracket  given by $ [u+v\eps, x+y\eps]= [u,x]+ [u,y]\eps + [v,x]\eps$. 

Write $TQ\cP(A,M;0)$ for the space
\[
 \mmc(\tilde{F}^2TQ\widehat{\Pol}(A,M;0)^{[1]});
\]
this is effectively the tangent bundle on the space of quantised co-isotropic structures.
\end{definition}

\begin{definition}\label{TQPoldef}
 Given a Maurer--Cartan element $\Delta \in \mc( Q\widehat{\Pol}_R(A,M;0) )$, define the centre $T_{\Delta} Q\widehat{\Pol}_R(A,M;0)$ of $(A,M,\Delta)$ to be the non-unital  brace algebra
\[
 (\hbar Q\widehat{\Pol}_R(A,M;0)_{\#}, \delta_{Q\Pol} + [\Delta,-]).
\]
Note that the twisted differential $\delta_{Q\Pol} + [\Delta,-]$ is necessarily square-zero by the Maurer--Cartan conditions, and necessarily compatible with the brace operations by properties of the bracket.

We define filtrations $\tilde{F}$ and $G$ on $T_{\Delta}Q\widehat{\Pol}_R(A,M;0)$ by
\begin{align*}
 \tilde{F}^iT_{\Delta}Q\widehat{\Pol}_R(A,M;0)_{\#} &:=\hbar\tilde{F}^iQ\widehat{\Pol}_R(A,M;0)_{\#},\\ 
G^jT_{\Delta}Q\widehat{\Pol}_R(A,M;0) &:= \hbar^jT_{\Delta}Q\widehat{\Pol}_R(A,M;0).
\end{align*}

Note that $(T_{\Delta} Q\widehat{\Pol}_R(A,M;0), \tilde{F})$ is  a non-unital almost commutative brace algebra over $R$. 
\end{definition}

Observe that $T_{\Delta}Q\cP(A,M;0):= \mmc(\tilde{F}^2T_{\Delta}Q\widehat{\Pol}(A,M;0)^{[1]})$ is  just the fibre of $TQ\cP(A,M;0) \to Q\cP(A,M;0)$ over $\Delta$; we think of this as the tangent space at $\Delta$ of the space of quantised co-isotropic structures. We regard  the cohomology of $T_{\Delta} Q\widehat{\Pol}_R(A,M;0)$ as a form of  relative quantised Poisson cohomology.

\begin{definition}\label{Qsigmadef}
Given $\Delta \in Q\cP(A,M;0)$, define $\sigma(\Delta)\in \z^2(\tilde{F}^2T_{\Delta}Q\widehat{\Pol}(A,M;0))$ to be 
\[
 -\pd_{\hbar^{-1}}\Delta = \hbar^{2}\frac{\pd \Delta}{\pd \hbar}.
\]

More generally, define the global section  $\sigma \co Q\cP(A,M;0)\to TQ\cP(A,M;0)$ of the tangent bundle to be the map induced by the morphism $Q\widehat{\Pol}(A,M;0)\to TQ\widehat{\Pol}(A,M;0)$ of filtered DGLAs given by   $\Delta  \mapsto \Delta -\pd_{\hbar^{-1}}\Delta\eps$.  
\end{definition}
We can think of $\sigma$ as giving us elements $[\sigma(\Delta)]$ in quantised Poisson cohomology associated to quantisations $\Delta$, and it generalises the corresponding constructions \cite[Definition \ref{DQvanish-Qsigmadef}]{DQvanish} and \cite[Definition \ref{DQnonneg-Qsigmadef}]{DQnonneg} for $(-1)$-shifted and $0$-shifted quantisations. 

 As in \cite[\S  \ref{poisson-bipoisssn}]{poisson}, we will usually consider stacky CDGAs    $A \in DG^+dg\CAlg(R)$ satisfying the following properties, since we can resolve derived Artin stacks by stacky CDGAs of this form, which can be thought of models for  derived higher  Lie algebroids:
\begin{assumption}\label{biCDGAprops}
\begin{enumerate}
 \item for any cofibrant replacement $\tilde{A}\to A$ in the model structure of Lemma \ref{bicdgamodel}, the morphism $\Omega^1_{\tilde{A}/R}\to \Omega^1_{A/R}$ is a levelwise quasi-isomorphism,
\item  the $A^{\#}$-module $(\Omega^1_{A/R})^{\#}$ in   graded chain complexes is cofibrant (i.e. it has the left lifting property with respect to all surjections of $A^{\#}$-modules in graded chain complexes),
\item there exists $N$ for which the chain complexes $(\Omega^1_{A/R}\ten_AA^0)^i $ are acyclic for all $i >N$.
\end{enumerate}
\end{assumption}

The following lemma breaks down the complex of quantised relative polyvectors into manageable pieces given by powers of tangent complexes.
\begin{lemma}\label{gradedcalclemma}
 If $A$  and $B$ are both cofibrant and satisfy Assumption \ref{biCDGAprops}, then $\gr_G^i\tilde{F}^pQ\widehat{\Pol}(A,M;0)$ is quasi-isomorphic to the cocone of 
\[ 
 \prod_{j \ge p} \hat{\HHom}_A(\Omega^{j-i}_{A/R},A)\hbar^{j-1}[i-j] \to \prod_{j \ge p} \hat{\HHom}_B(\oL\CoS^{j-i}_B\bL^{B/A},B)\hbar^{j-1} 
\]
coming from the connecting homomorphism $S \co   \oL\Omega^1_{B/A}=\bL^{B/A}\to\Omega^1_{A/R}[1]$.

Moreover,  $\gr_G^i\tilde{F}^pT_{\Delta}Q\widehat{\Pol}(A,M;0)$ is quasi-isomorphic to  $\hbar\gr_G^i\tilde{F}^pQ\widehat{\Pol}(A,M;0)$.
\end{lemma}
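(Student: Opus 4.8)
The strategy is to compute the associated graded of $Q\widehat{\Pol}(A,M;0)$ explicitly, exploiting the semidirect product description $\C = \C\C_{R,BD_1}(A,\cDiff_{B/R}(M))_{[1]} \rtimes \C\C_{R,BD_1}(A)$ together with Remark \ref{swissrmk}. First I would recall that $\gr^\gamma$ of the internal Hochschild complex $\C\C_{R,BD_1}(A)$ is, by Lemma \ref{HHaclemma}, the complex $\cHom(\gr^{\beta F}\b A, \gr^F A)$, and that with the trivial filtration on $A$ (the relevant case here, since the filtration $F$ on the CDGA $A$ is trivial) combined with the HKR isomorphism of Remark \ref{HKRrmk}, the graded piece $\gr^\gamma_j \C\C_{R,BD_1}(A)$ is levelwise quasi-isomorphic to $\hat{\HHom}_A(\Omega^j_{A/R}, A)$. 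Here one uses that $A$ is cofibrant, so the cotangent complex is represented by $\Omega^1_{A/R}$, and Assumption \ref{biCDGAprops}(2) guarantees that $(\Omega^1_{A/R})^{\#}$ is cofibrant as an $A^{\#}$-module so that the symmetric powers $\CoS^j$ behave correctly; Assumption \ref{biCDGAprops}(3) ensures $\hat{\Tot}$ converges. The analogous statement for the module part gives $\gr^\gamma_j \C\C_{R,BD_1}(A, \cDiff_{B/R}(M)) \simeq \hat{\HHom}_B(\oL\CoS^j_B \bL_{B/A}, B)$ — here the key input is the HKR-type identification from Remark \ref{extremecasesrmk}, namely that $\cDiff_{B/A}(M) \to \C\C_R(A, \cDiff_{B/R}(M))$ is a filtered quasi-isomorphism, together with the identification $\gr^F_j \cDiff_{B/A}(M) \simeq \cHom_B(M \ten_B \CoS^j_B \Omega^1_{B/A}, M) \cong \cHom_B(\CoS^j_B \Omega^1_{B/A}, B)$ valid since $M$ is a line bundle and $B$ is cofibrant over $A$.

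Next I would assemble these via the semidirect product. By Remark \ref{swissrmk}, $\C$ fits in a homotopy fibre sequence whose base is $\C\C_{R,BD_1}(A)$ and whose fibre is the shifted module $\C\C_{R,BD_1}(A,\cDiff_{B/R}(M))_{[1]}$, with connecting map induced by the action, i.e. on associated gradeds by the map $S \co \bL_{B/A} \to \Omega^1_{A/R}[1]$ (this is exactly the map appearing in Lemma \ref{gradedcalclemma}). Passing to $\gr_G^i$ first kills all of the $\hbar$-adic structure except the single power $\hbar^{j-1}$ recording the polyvector degree $j = p + (\text{Hochschild weight})$ — more precisely, in $\tilde F^p Q\widehat{\Pol} = \prod_{p' \ge p} \hat{\Tot}(\gamma F)_{p'}\C \hbar^{p'-1}$ the element of $\gamma$-weight $p'$ contributes $\hbar^{p'-1}$, and after taking $\gr^\gamma_{p'}$ (which produces a term of internal cohomological degree $p' - i$ in each factor once we remember that coderivations shift degree) one obtains precisely the cocone displayed in the statement, with the index $j = p'$ running over $j \ge p$ and the $[i-j]$ shift accounting for the difference between the Hochschild weight $j$ and the polyvector grading $i$ picked out by $\gr_G^i$. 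I would therefore match the two complexes term by term: the $A$-side product gives $\prod_{j\ge p}\hat{\HHom}_A(\Omega^{j-i}_{A/R},A)\hbar^{j-1}[i-j]$ and the $B$-side product gives $\prod_{j\ge p}\hat{\HHom}_B(\oL\CoS^{j-i}_B\bL_{B/A},B)\hbar^{j-1}$, connected by $S$; the cocone of this connecting map is $\gr_G^i \tilde F^p Q\widehat{\Pol}(A,M;0)$.

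For the final assertion, I would observe that $T_\Delta Q\widehat{\Pol}_R(A,M;0) = (\hbar Q\widehat{\Pol}_R(A,M;0)_\#, \delta_{Q\Pol} + [\Delta,-])$ by Definition \ref{TQPoldef}, so that as a graded object it is literally $\hbar$ times $Q\widehat{\Pol}$; the statement "$\gr_G^i \tilde F^p T_\Delta Q\widehat{\Pol} \simeq \hbar \gr_G^i \tilde F^p T_\Delta Q\widehat{\Pol}$" is tautological as written and is presumably a typo for "$\simeq \hbar \gr_G^i \tilde F^p Q\widehat{\Pol}(A,M;0)$" — i.e. the tangent complex at $\Delta$, on the associated graded of both filtrations, agrees with $\hbar$ times the untwisted graded polyvectors. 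This holds because on the associated graded $\gr_G \gr_{\tilde F}$ the perturbation $[\Delta,-]$ of the differential raises $\tilde F$-filtration (since $\Delta \in \tilde F^2$ and the bracket on an almost commutative brace algebra satisfies $[\tilde F^a, \tilde F^b] \subset \tilde F^{a+b-1}$, so $[\Delta, \tilde F^p] \subset \tilde F^{p+1}$), hence vanishes on $\gr_{\tilde F}$; it also shifts $G$-filtration upward since $\Delta \in \tilde F^2 \subset G^1\cdot(\text{stuff})$ in the sense that its $\hbar^0$-component lies in $\tilde F^2/G^1$, so modulo $G$ the twisted and untwisted differentials coincide. Thus $\gr_G^i\gr_{\tilde F}^p$ of the twist equals that of $\hbar Q\widehat{\Pol}$, which is the first part of the lemma with an extra $\hbar$.

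**Main obstacle.** The bookkeeping is the delicate part: correctly tracking how the three gradings — the $\gamma F$-Hochschild weight, the $\hbar$-power ($G$-filtration), and the polyvector index picked out by $\gr_G^i$ — interact through the semidirect product and through the shift conventions of the bar/cobar functors $\Omega_{BD_1,+}\b_{BD_1,+}$ of Proposition \ref{swissprop}, so that the cocone comes out with exactly the shift $[i-j]$ and the connecting map is $S$ and not (say) $S[1]$ or its transpose. I expect verifying that the semidirect-product differential restricts on associated gradeds to the map induced by $S \co \bL_{B/A} \to \Omega^1_{A/R}[1]$ — and that $\CoS^k$ rather than $\Symm^k$ appears on the $B$-side, which is forced by working with coderivations / the cobar side — to be where the real content lies; everything else reduces to the HKR computations already recorded in Remarks \ref{HKRrmk} and \ref{extremecasesrmk} and the graded analysis of Lemma \ref{barcobarprop1}.
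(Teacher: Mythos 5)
Your overall strategy is the paper's: read off $\gr_G^i\tilde{F}^pQ\widehat{\Pol}$ from the fibre sequence of the semidirect product (Remark \ref{swissrmk}) as a cocone of products of associated graded pieces of the two Hochschild complexes, identify those graded pieces by HKR/bar--cobar for $A$, and, for the tangent statement, note that $[\Delta,-]$ raises the $G$-filtration (indeed $\Delta\in\tilde{F}^2Q\widehat{\Pol}\subset G^1$, so $[\Delta,G^i]\subset G^{i+1}$) and hence vanishes on $\gr_G$; your reading of the second assertion as a typo for $\hbar\gr_G^i\tilde{F}^pQ\widehat{\Pol}(A,M;0)$, and your argument for it, are correct (the digression through $\gr_{\tilde{F}}$ is not needed, since only the $G$-grading is being taken).

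The one genuine deviation is your identification of the $B$-side graded pieces. You route it through the filtered quasi-isomorphism $\cDiff_{B/A}(M)\to\C\C_R(A,\cDiff_{B/R}(M))$ of Remark \ref{extremecasesrmk} together with $\gr^F_k\cDiff_{B/A}\cong\cHom_B(\CoS^k_B\Omega^1_{B/A},B)$; both of these are only available when $B$ is cofibrant over $A$ (so that $\Omega^1_{B/A}$ models $\bL_{B/A}$ and $\CoS^k$ is already derived), and that is not among the lemma's hypotheses: as in Definition \ref{QPoldef}, $\phi\co A\to B$ is an arbitrary morphism of stacky CDGAs that are each cofibrant over $R$. (Remark \ref{extremecasesrmk} is itself stated only under cofibrancy of $B$ over $A$, and its justification is essentially the computation this lemma is meant to supply, so leaning on it here is also slightly circular.) The paper's proof avoids $\cDiff_{B/A}$ entirely: it uses $\gr^F_k\cDiff_{B/R}\cong\cHom_B(\CoS^k_B\Omega^1_{B/R},B)$, which needs only cofibrancy of $B$ over $R$, and then the bar--cobar resolution of $A$ identifies $\gr^{\gamma F}_{j-i}\C\C_R(A,\cDiff_{B/R})$ with $\cHom_B(\CoS^{j-i}_B(\cocone(\Omega^1_{B/R}\to\Omega^1_{A/R}\ten_AB)),B)$, the cocone being a model of $\bL_{B/A}$ under the stated hypotheses; this is also exactly where the connecting map $S$ comes from. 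So either add the hypothesis that $A\to B$ is a cofibration (and then justify that this suffices for the later uses), or replace that step by the direct computation just described; the rest of your argument then goes through as you wrote it.
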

\begin{proof}
By construction, Remark \ref{swissrmk} and Lemma \ref{semidirectlemma}, the complex underlying $\gr_G^i\tilde{F}^pQ\widehat{\Pol}$ is the homotopy kernel of 
\[
\prod_{j \ge p} \hat{\Tot}\gr^{\gamma}_{j-i}\C\C^{\bt}_R(A)\hbar^{j-1} \to \prod_{j \ge p} \hat{\Tot}\gr^{\gamma F}_{j-i}\C\C^{\bt}_R(A, \cDiff_{B/R} )\hbar^{j-1},  
\]
so is just given by the cocone of that morphism.
Since $B$ is assumed cofibrant, we have isomorphisms 
\[
 \gr^F_{k} \cDiff_{B/R} \to \cHom_B(\CoS^k_B\Omega^1_{B/R},B).
\]
 The bar-cobar resolution for $A$ as a  commutative algebra then  gives quasi-isomorphisms
\begin{align*}
 \cHom_A(\Omega^{j-i}_{A/R},A)[i-j]&\to \gr^{\gamma}_{j-i}\C\C^{\bt}_R(A)\\
 \cHom_A(\CoS^{j-i}_B(\cocone(\Omega^1_{B/R} \to\Omega^1_{A/R}\ten_AB)) ,B)&\to \gr^{\gamma F}_{j-i}\C\C^{\bt}_R(A, \cDiff_{B/R} ).
\end{align*}
Since $\cocone(\Omega^1_{B/R} \to\Omega^1_{A/R}\ten_AB) $ is a model for the cotangent complex $\bL^{B/A}$, the results follow.
\end{proof}

Given an element $\Delta \in Q\cP(A,M;0)$, we write $\Delta_A$ for the image in $Q\cP(A,0)$ and $\Delta_B$ for the image in $\hat{\Tot}\C\C_{R,BD_1}(A, \cDiff_{B/R})$. If we write $\Delta =  \sum_{j \ge 2} \Delta_j \hbar^{j-1}$, then by working modulo $G^1+\tilde{F}^3$, Lemma \ref{gradedcalclemma} allows us to identify $\Delta_2=(\Delta_{2,A},\Delta_{2,B})$ with a closed element of degree $0$ in the cocone of
\[
 \hat{\HHom}_A(\Omega^2_{A/R},A) \to \oR\hat{\HHom}_B(\oL\CoS^2_B\bL^{B/A},B)[2].
\]

Now $\Delta_{2,A}$ defines a closed element of the first space, and since the composition of this map with 
\[
 \hat{\HHom}_B(\oL\CoS^2_B\bL^{B/A},B) \to \hat{\HHom}_B(\Omega^1_{B/R}\ten_B^{\oL}\bL^{B/A},B)
\]
is homotopic to $0$, $\Delta_{2,B}$ defines a closed element of the latter.

We then have a diagram
\[
 \begin{CD}
\Omega^1_{A/R} @>>> \Omega^1_{B/R}\\
@V{\Delta_{2,A}^{\flat}}VV @VV{\Delta_{2,B}^{\flat}}V\\
\hat{\HHom}_A(\Omega^1_{A/R},A) @>{S}>> \oR\hat{\HHom}_B(\bL^{B/A},B)[1]
 \end{CD}
\]
commuting up to a canonical homotopy coming from $\Delta_{2,B}$.

\begin{definition}\label{Qnondegdef}
Say that a quantisation $\Delta$ of the pair $(A,M)$  is non-degenerate if the maps 
\begin{align*}
 \Delta_{2,A}^{\flat}\co  \Tot^{\Pi} (\Omega_{A/R}^1\ten_AA^0) &\to \hat{\HHom}_A(\Omega^1_A, A^0)\\
\Delta_{2,B}^{\flat}\co  \Tot^{\Pi} (\Omega_{B/R}^1\ten_BB^0) &\to \oR\hat{\HHom}_B(\bL^{B/A}, B^0)^{[1]}
\end{align*}
are quasi-isomorphisms and $\Tot^{\Pi} (\Omega_{A/R}^1\ten_AA^0)$ (resp. $\Tot^{\Pi} (\Omega_{B/R}^1\ten_BB^0)$) is a perfect complex over $A^0$ (resp. $B^0$).
\end{definition}

In other words, a non-degenerate quantisation gives an equivalence between the cotangent and tangent complexes of $A$, and between the cotangent complex of $B$ and the derived normal bundle of $B$ over $A$.

\section{Compatibility of quantisations and isotropic structures}\label{compatsn} 

In this section, we introduce generalised isotropic structures, develop the notion of compatibility between a quantisation and a generalised isotropic structure, and give some preliminary existence results for quantisations of Lagrangians.

\subsection{Morphisms from the de Rham algebra}


\begin{definition}\label{DRdef}
 Given a stacky CDGA $A$ over $R$, define the stacky de Rham algebra of $A$ to be the  complete filtered stacky CDGA
\[
\cD\cR(A/R)^n_i:= \prod_{j\ge 0} (\Omega^j_A)^n_{i+j}
\]
with filtration $F^p \cD\cR(A/R)= \prod_{j\ge p} (\Omega^j_A)_{[j]}$, cochain differential $\pd$ and chain differential $\delta \pm d$, where $d$ is the de Rham differential, and the differentials $\pd,\delta$ are induced from those on $A$. 

We then write $\DR(A/R):= \hat{\Tot}\cD\cR(A/R)$.
\end{definition}
In particular, beware that the de Rham differential is absorbed in the chain (derived) structure, not the cochain (stacky) structure.

\begin{lemma}\label{liftlemma}
 Given a morphism $A \to \gr_F^0B$ of stacky CDGAs over $R$, with $A$ cofibrant and $(B,F)$ a complete filtered stacky CDGA, there is an associated filtered stacky CDGA morphism $\cD\cR(A/R) \to F^0B$ over $R$, unique up to coherent homotopy.
\end{lemma}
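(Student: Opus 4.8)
The plan is to construct the morphism $\cD\cR(A/R) \to F^0B$ by building a compatible system of filtered stacky CDGA maps on the quotients $F^0B/F^{p}B$ and passing to the limit, using that $A$ is cofibrant to lift stepwise against the nilpotent (in the filtered sense) extensions $F^0B/F^{p+1}B \to F^0B/F^{p}B$. First I would recall the universal property of the de Rham algebra: for a cofibrant stacky CDGA $A$, a filtered CDGA map $\cD\cR(A/R) \to (C,F)$ into a complete filtered stacky CDGA restricting on $F^0/F^1$ to a given $A \to \gr^0_F C$ amounts, up to coherent homotopy, to a choice of such $A$-algebra structure together with a flat connection data; the key point is that $\cD\cR(A/R)$ is the ``free'' complete filtered CDGA on $A$ equipped with the de Rham differential placing $\Omega^1_A$ in filtration degree $1$. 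Concretely, giving $\cD\cR(A/R) \to F^0B$ is the same as giving the composite $A = F^0\cD\cR \to F^0 B$ (which we have) plus a filtered lift of the de Rham differential, i.e. the obstruction theory is controlled by the complexes $\hat{\HHom}_A(\Omega^1_A, \gr^p_F B)$, all of which vanish obstruction-wise because $\cD\cR$ is semi-free over $A$ with generators exactly $\Omega^1_A[1]$ in weight $1$.

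The core argument I would spell out is the following inductive lifting. Write $B_p := B/F^{p+1}B$, so $B_0 = \gr^0_F B$ and $B = \Lim_p B_p$. We are given the solid map $A \to B_0$. Assume inductively a filtered CDGA map $f_p \colon \cD\cR(A/R)/F^{p+1} \to B_p$ compatible with the previous stages. Since $F^{p+1}B_{p+1}$ is a square-zero (more precisely, $F^{p+1}\cdot F^1 \subset F^{p+2} = 0$ in $B_{p+1}$) ideal in $B_{p+1}$, and since $\cD\cR(A/R)/F^{p+2}$ is obtained from $\cD\cR(A/R)/F^{p+1}$ by a cofibration adjoining the generators $(\Omega^1_A)^{\otimes\,\bullet}$ concentrated in filtration weight $p+1$ with prescribed differential, lifting $f_p$ to $f_{p+1} \colon \cD\cR(A/R)/F^{p+2} \to B_{p+1}$ amounts to solving a lifting problem for the cofibrant $A^{\#}$-module $(\Omega^1_A)^{\#}$ against the surjection $F^{p+1}B_{p+1} \to 0$ of $B^0$-modules (using Assumption \ref{biCDGAprops}(2), i.e. that $(\Omega^1_{A/R})^{\#}$ is a cofibrant $A^{\#}$-module in graded chain complexes, so it has the left lifting property). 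The space of such lifts is a torsor under maps in $\hat{\HHom}_A(\Omega^1_A, F^{p+1}B_{p+1})$, hence nonempty and its ``space of choices'' is connected up to higher homotopy; this is exactly what gives uniqueness up to coherent homotopy. Taking $f := \Lim_p f_p$ (using completeness of $B$, so $F^0B = \Lim_p F^0 B_p$) produces the desired filtered map $\cD\cR(A/R) \to F^0B$.

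For the coherence/uniqueness statement I would package the above into a statement that the relevant mapping space is contractible onto the given datum: the simplicial set of filtered CDGA maps $\cD\cR(A/R) \to F^0B$ lifting $A \to \gr^0_F B$ is, by the obstruction calculus above, a (homotopy) limit of torsors under the complexes $\hat{\HHom}_A(\Omega^1_A, \gr^p_F B)$, and since each stage is a surjection with the lifting property and the mapping-space fibres are nonempty and ``rigid enough,'' this tower has contractible homotopy limit over the point corresponding to the given structure. Alternatively, and perhaps more cleanly, I would invoke that $\cD\cR(A/R)$ is a cofibrant replacement, as a complete filtered CDGA, of $A$ equipped with the trivial filtration and trivial de Rham differential is not quite right --- rather I would cite the model-categorical statement (analogous to \cite[Lemma \ref{poisson-bicdgamodel}]{poisson} for the filtered setting, which identifies complete filtered CDGAs with $\bG_m$-equivariant $\Z[\dbar]$-algebras as in \S \ref{filtrnsn}) that $A \to \cD\cR(A/R)$ exhibits $\cD\cR$ as the initial filtered CDGA under $A$ whose $\gr^1$ receives $\Omega^1_A$ with the de Rham differential, giving the adjunction directly.

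The main obstacle, and the step I expect to need the most care, is verifying that the tower of lifting problems really is governed by \emph{cofibrations} of $A$-algebras with the generators in the correct filtration weights --- i.e. that $\cD\cR(A/R)/F^{p+2} \to \cD\cR(A/R)/F^{p+1}$ is a cofibration in the filtered model structure --- and that Assumption \ref{biCDGAprops}(2) (cofibrancy of $(\Omega^1_{A/R})^{\#}$), combined with cofibrancy of $A$ and \ref{biCDGAprops}(1), genuinely supplies the left lifting property against the surjections $F^0 B_{p+1} \to F^0 B_p$ of stacky CDGAs. Everything else (square-zero-ness of the kernels after truncation, completeness giving the limit, the de Rham differential forcing the weight bookkeeping) is routine once that point is pinned down.
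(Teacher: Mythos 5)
Your overall architecture (induct along the square-zero extensions $F^0B/F^{p+1}B \to F^0B/F^pB$, use cofibrancy, pass to the limit by completeness) matches the paper's framing, but the step that actually carries the lemma is missing. The content of the statement is not existence of a lift but \emph{uniqueness up to coherent homotopy}, i.e. that the whole homotopy fibre of lifts at each stage is contractible. You assert that the lifts at stage $p$ form a torsor under $\hat{\HHom}_A(\Omega^1_{A/R},\gr^{p+1}_FB)$ and that this is ``exactly what gives uniqueness up to coherent homotopy'' --- that is a non sequitur: a torsor under a complex is contractible only if the complex is acyclic, and $\hat{\HHom}_A(\Omega^1_{A/R},\gr^{p+1}_FB)$ is not acyclic in general. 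Likewise ``the obstructions vanish because $\cD\cR$ is semi-free over $A$'' is not the right mechanism: semi-freeness lets you extend a graded-algebra map on generators, but the whole constraint is compatibility with the chain differential $\delta\pm d$, and this couples the choice of lift on $A$ to the choice of lift on $\Omega^1_{A/R}$. The paper's proof isolates exactly this point: the deformation theory along $F^0B/F^{r+1}B \to F^0B/F^rB$ is governed by the complex of \emph{filtered derivations} $\cHom_{\cD\cR(A),\Fil}(\Omega^1_{\cD\cR(A)/R}, \gr_F^rB)$, and because the de Rham differential is absorbed into the chain differential of $\cD\cR(A)$, this complex is levelwise acyclic for any coefficient module $M$ with $M=F^1M$ (the contribution of a derivation's value on $a\in A$ cancels against its value on $da$, so the complex is a cone on an isomorphism). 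That acyclicity simultaneously kills obstructions and makes the space of lifts contractible, yielding $\map_{\Fil}(\cD\cR(A),B)\simeq \map(A,\gr^0_FB)$. Your proposal never identifies, let alone proves, this acyclicity, so as written it proves at most existence of some lift, not the essential uniqueness claimed.

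Two smaller inaccuracies feed into this: $F^0\cD\cR(A/R)$ is all of $\cD\cR(A/R)$, not $A$ (you want $\gr^0_F\cD\cR(A/R)\cong A$); and the map $\cD\cR(A)/F^{p+2}\to\cD\cR(A)/F^{p+1}$ does not adjoin new algebra generators in weight $p+1$ --- the weight-$(p+1)$ part is $\Omega^{p+1}_A$, already generated by the weight-$1$ piece $\Omega^1_A$ --- so the stagewise choices are not ``free generators to place'' but lifts of the maps on $A$ and on $\Omega^1_A$ constrained by the differential, which is precisely why the correct governing complex is the filtered-derivations complex above rather than $\hat{\HHom}_A(\Omega^1_{A/R},-)$ alone. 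Your alternative suggestion (a universal property of $\cD\cR(A/R)$ as ``initial filtered CDGA under $A$ whose $\gr^1$ receives $\Omega^1_A$'') is not the universal property being proved here and would in any case still require the same acyclicity computation to verify.
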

\begin{proof}
 We may assume that $A$ is cofibrant, and then $\cD\cR(A)$ is cofibrant as a complete filtered stacky CDGA, in the sense that it has the left lifting property with respect to  surjections of complete filtered stacky CDGAs over $R$ which are levelwise filtered quasi-isomorphisms. 
 
 For any complete filtered $A$-module $(M,F)$, we may regard $M$ as a $\cD\cR(A)$-module via the projection $\cD\cR(A) \to A$. 
  Now,  $\cD\cR(A)$ is generated as a complete filtered algebra by $A \oplus (\Omega^1_A)_{[1]}$, so ($A$ being cofibrant) the  double complex  
$ \cHom_{ \cD\cR(A),\Fil}(\Omega^1_{\cD\cR(A)/R}, M)$  
of filtered derivations from $\cD\cR(A)$ to $M$ is given by 
\[
\cone(\cHom_{ A,\Fil}(\Omega^1_{A/R}, F^1M) \to \cHom_{ A,\Fil}(\Omega^1_{A/R}, F^0M)), 
\]
the cone being taken in the chain direction; here, the first term comes from restriction of a derivation to $(\Omega^1_A)_{[1]}\subset \cD\cR(A)$ and the second from restriction to $A\subset \cD\cR(A)$.
Thus  $ \cHom_{ \cD\cR(A),\Fil}(\Omega^1_{\cD\cR(A)/R}, M)$ is  levelwise acyclic  when  $M=F^1M$.
  

Now, the double complex $ \cHom_{ \cD\cR(A),\Fil}(\Omega^1_{\cD\cR(A)/R},\gr_F^rB )$  governs the obstruction theory to lifting maps from $\cD\cR(A)$ along the square-zero extension $F^0B/F^{r+1}B \to F^0B/F^rB$. Thus the acyclicity above for all $r>0$ gives the required equivalence of mapping spaces
\[
 \map_{\Fil}(\cD\cR(A), B) \simeq \map(A, \gr_F^0B)
\]
of filtered stacky CDGAs and of stacky CDGAs, respectively.
 \end{proof}

The following is a slight generalisation of \cite[Lemma \ref{poisson-keylemma}]{poisson}:
\begin{lemma}\label{mulemma}
 Take  a cofibrant stacky CDGA $A$ over $R$, a complete filtered  CDGA $B$ over $R$, and a    filtered morphism $\phi \co \DR(A/R)\to B$. Then for any derivation $\pi\in \mc(F^1\DDer_R(B))$, there is an associated filtered CDGA morphism
\[
 \mu(-,\pi) \co \DR(A/R) \to (B, \delta + \pi)
\]
given by $\mu(a,\pi)=\phi(a)$  and $\mu(df,\pi) = \phi(df) + \pi\phi(f)$ for $a,f \in A$.
\end{lemma}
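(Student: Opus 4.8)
The plan is to verify directly that the prescribed formula $\mu(-,\pi)$ satisfies the three requirements: it is a morphism of graded-commutative algebras, it is compatible with the cochain differential $\pd$, and it is a chain map for the twisted differential $\delta + \pi$ on the target, together with compatibility with the filtration $F$. Since $\DR(A/R) = \hat{\Tot}\cD\cR(A/R)$ is, as a graded-commutative algebra, freely generated over $A$ by the symbols $df$ for $f \in A$ (modulo the Leibniz rule $d(fg) = f\,dg + (\mathrm{d}f)\,g$ and $\Q$-linearity), a CDGA map out of it is determined by its restriction to $A$ and by the images of the $df$, subject to the Leibniz relation. So the first step is to check that $a \mapsto \phi(a)$, $df \mapsto \phi(df) + \pi(\phi(f))$ respects the Leibniz rule: this amounts to the identity $\pi(\phi(fg)) = \phi(f)\pi(\phi(g)) + \pi(\phi(f))\phi(g)$, which holds because $\phi$ is an algebra map and $\pi$ is a derivation of $B$. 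This extends uniquely to a multiplicative map on all of $\DR(A/R)$, and multiplicativity on higher-degree forms is then automatic.

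Next I would check the differentials. The cochain (stacky) differential $\pd$ on $\cD\cR(A/R)$ is induced from that on $A$ and commutes with $d$; since $\pi$ has chain degree matching a derivation with respect to $\delta$ only — i.e.\ $\pi \in \mc(F^1\DDer_R(B))$ lives in the internal Hom complex and is part of the twisted \emph{chain} differential — the map $\mu$ automatically commutes with $\pd$ because $\phi$ does and $\pi$ commutes with $\pd$ (this is where I would invoke that $\DDer$ and the de Rham complex are set up so $\pd$ acts compatibly). The substantive computation is that $\mu$ intertwines the chain differential $\delta \pm d$ on the source with $\delta + \pi$ on the target. On elements $a \in A$ this reads $\mu(\delta a \pm da, \pi) = \phi(\delta a) + \phi(da) + \pi(\phi(a)) = (\delta + \pi)(\phi(a))$, using that $\phi$ is already a chain map $\DR(A/R) \to B$ (so $\phi(\delta a + da) = \delta\phi(a)$). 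On elements $df$ one computes $\mu((\delta \pm d)(df),\pi) = \mu(-d(\delta f),\pi) = -\phi(d\delta f) - \pi(\phi(\delta f))$ and compares with $(\delta + \pi)\mu(df,\pi) = (\delta+\pi)(\phi(df) + \pi\phi(f))$; expanding the latter and using $\delta\phi(df) = -\phi(d\delta f)$ (from $\phi$ being a chain map), $\pi\phi(df) = \pi\phi(df)$, $\delta\pi\phi(f) = -\pi\delta\phi(f) + (\delta\pi)\phi(f)$, and $\pi^2\phi(f) = \tfrac12[\pi,\pi]\phi(f)$, the Maurer--Cartan equation $\delta\pi + \tfrac12[\pi,\pi] = 0$ makes the unwanted terms cancel. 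The sign bookkeeping here — keeping track of the $\pm d$ convention and the degree of $\pi$ — is the one place that needs care.

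Finally, for the filtration: $\mu$ agrees with $\phi$ on $F^0$ modulo $F^1$ since the correction term $\pi\phi(f)$ only appears on $df \in F^1$, and because $\pi \in F^1\DDer_R(B)$, the term $\pi(\phi(f))$ lies in $F^1 B$, so $\mu(F^p\cD\cR(A/R)) \subset F^p B$; indeed $\mu$ strictly preserves the filtration (the correction raises both the de Rham weight contribution and the target filtration level by one). I would also remark that since both $\phi$ and $\mu$ restrict to the same algebra map on $A = \gr^0_F$, Lemma~\ref{liftlemma} guarantees the construction is consistent with the expected mapping-space statements, and that uniqueness up to homotopy of $\phi$ is inherited by $\mu$.

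The main obstacle is purely the sign and degree convention in the chain-map verification: the de Rham differential has been absorbed into the chain structure (``$\delta \pm d$''), $\pi$ is a derivation of chain degree $-1$ viewed inside $\DDer_R(B)$, and one must confirm that the Maurer--Cartan relation for $\pi$ is exactly the identity needed to close up the computation on the generators $df$ — no genuinely new idea is required beyond unwinding these conventions carefully, so this is essentially a (somewhat delicate) direct check rather than a conceptual difficulty.
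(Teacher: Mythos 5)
Your proposal is correct and follows essentially the same route as the paper: define $\mu$ on $A$ and on the generators $df$, observe that multiplicativity holds because $\phi\circ d+\pi\circ\phi$ is a $\phi$-derivation, and verify the chain-map property using the Maurer--Cartan equation for $\pi$ (the paper packages your direct expansion on $df$ as applying $(\delta+\pi)^2=0$ to the identity already checked on elements of $A$, which is the same computation).
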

\begin{proof}
 The formulae clearly define a filtered morphism $ \mu(-,\pi) \co \DR(A)^{\#} \to B^{\#}$ of graded algebras, since $ \phi\circ d + \pi \circ \phi$ defines a derivation on $A$ with respect to $\phi \co A \to B$.  We therefore need only check that $\mu$ is a chain map. We have
\begin{align*}
 \delta\mu(a,\pi) &= \phi(\delta a)+\phi(da)\\
\pi\mu(a,\pi) &=\pi\phi(a)\\
(\delta+\pi)\mu(a,\pi)&= \mu(\delta a +da,\pi),
\end{align*}
and the calculation above applied to $a=f$ and using that $(\delta +\pi)^2=0$  gives
\begin{align*}
 (\delta+\pi)\mu(df,\pi) &= - (\delta+\pi)\mu(\delta f, \pi)\\
&= - (\delta+\pi)\phi(\delta f)\\
&= -\phi(d\delta f) -\pi\phi(\delta f)\\
&= \mu(-d\delta f, \pi)\\
&=\mu( (\delta -d)df, \pi),
\end{align*}
as required.
\end{proof}

Combining Lemmas \ref{liftlemma} and \ref{mulemma} gives:
\begin{lemma}\label{mulemma2}
 Take   a morphism $\phi \co A \to \gr_F^0B$ of stacky CDGAs over $R$,  with $A$ cofibrant and $B$ a complete filtered stacky CDGA. Then for any  $\pi\in \mmc(\hat{\Tot} F^1\cDer_R(B))$, there is an associated morphism 
\[
 \mu(-,\pi) \co \DR(A/R) \to (\hat{\Tot}{B},\delta + \pi),
\]
of 
 filtered CDGAs, unique up to coherent homotopy.
\end{lemma}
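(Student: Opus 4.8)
The statement to prove is Lemma \ref{mulemma2}, which combines the two preceding lemmas. Here is how I would proceed.

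\begin{proof}[Proof plan]
The strategy is a two-step reduction: first use Lemma \ref{liftlemma} to produce a filtered map out of $\DR(A/R)$, then apply the totalisation of Lemma \ref{mulemma} to twist it by $\pi$.

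First I would observe that since $A$ is cofibrant and $B$ is a complete filtered stacky CDGA with a morphism $\phi\co A \to \gr_F^0 B$, Lemma \ref{liftlemma} supplies a filtered stacky CDGA morphism $\tilde\phi\co \cD\cR(A/R) \to F^0B$ over $R$, unique up to coherent homotopy. Applying the lax monoidal functor $\hat{\Tot}$ (which, as noted after its definition, carries a multiplication and hence sends stacky CDGAs to CDGAs and respects the filtrations $F^p$) produces a filtered CDGA morphism $\hat{\Tot}\tilde\phi\co \DR(A/R) = \hat{\Tot}\cD\cR(A/R) \to \hat{\Tot}F^0 B = F^0(\hat{\Tot}B)$, where I am writing $F^p(\hat\Tot B) := \hat\Tot F^p B$ for the induced complete filtration on the CDGA $\hat\Tot B$. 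The completeness of $F$ on $B$ passes to $\hat\Tot B$ because $\hat\Tot$ is a limit-type construction commuting with the relevant inverse limits.

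Next, given $\pi \in \mmc(\hat{\Tot}F^1\cDer_R(B))$, I would interpret $\pi$ as a Maurer--Cartan element of $F^1\DDer_R(\hat\Tot B)$: the totalisation $\hat\Tot F^1\cDer_R(B)$ maps naturally to $F^1\DDer_R(\hat\Tot B)$ compatibly with the DGLA structures, so a Maurer--Cartan element of the former (more precisely a $0$-simplex of its Maurer--Cartan space, or the image of $\pi$ after choosing a representative) gives a derivation $\pi \in \mc(F^1\DDer_R(\hat\Tot B))$. Then Lemma \ref{mulemma}, applied with the cofibrant stacky CDGA $A$, the complete filtered CDGA $\hat\Tot B$, and the filtered morphism $\hat\Tot\tilde\phi$, yields a filtered CDGA morphism $\mu(-,\pi)\co \DR(A/R) \to (\hat\Tot B, \delta + \pi)$ with the stated formulae $\mu(a,\pi) = \hat\Tot\tilde\phi(a)$ and $\mu(df,\pi) = \hat\Tot\tilde\phi(df) + \pi\,\hat\Tot\tilde\phi(f)$.

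The main thing requiring care --- and the point I would flag as the genuine obstacle --- is the uniqueness up to coherent homotopy, since we are composing two constructions each only defined up to coherent homotopy, and we must also account for the fact that $\pi$ lives in a Maurer--Cartan \emph{space} rather than being a single element. The clean way to handle this is to phrase everything at the level of mapping spaces: Lemma \ref{liftlemma} gives an equivalence $\map_{\Fil}(\cD\cR(A),B) \simeq \map(A,\gr_F^0 B)$, and after applying $\hat\Tot$ one gets a corresponding equivalence of mapping spaces of (filtered) CDGAs; the construction $\mu(-,\pi)$ of Lemma \ref{mulemma} is functorial in the input filtered morphism and in $\pi$, hence upgrades to a map of spaces $\map_{\Fil}(\DR(A/R),\hat\Tot B) \times \mmc(\hat\Tot F^1\cDer_R(B)) \to \map_{\Fil}(\DR(A/R),(\hat\Tot B,\delta+\pi))$ (with the codomain suitably fibred over the Maurer--Cartan space). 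Tracing a point of $\map(A,\gr_F^0 B)$ representing $\phi$ through this composite then gives the asserted morphism, well-defined up to coherent homotopy as the image of a point under a map of spaces whose source is contractible in the relevant sense. The remaining verifications --- that $\hat\Tot$ is lax monoidal and preserves completeness, and that the derivation twist of Lemma \ref{mulemma} is compatible with totalisation --- are routine and follow the pattern already established in \cite{poisson}.
\end{proof}
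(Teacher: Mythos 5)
Your proposal is correct and follows exactly the paper's intended argument: the paper derives Lemma \ref{mulemma2} simply by combining Lemma \ref{liftlemma} (lifting $\phi$ to a filtered morphism $\cD\cR(A/R)\to F^0B$, unique up to coherent homotopy) with the twisting construction of Lemma \ref{mulemma} applied after totalisation, which is precisely your two-step reduction. Your additional remarks on $\hat{\Tot}$, completeness, and handling the Maurer--Cartan space are sensible elaborations of details the paper leaves implicit.
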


\subsection{The compatibility map}

We  now develop the notion of compatibility between de Rham data and  quantisations of a pair $(A \to B)$, generalising the notion of compatibility between generalised $0$-shifted pre-symplectic structures and $E_1$ quantisations from \cite{DQnonneg}. We begin by recalling some observations from \cite[\S \ref{DQnonneg-formalitysn}]{DQnonneg}. 

As explained succinctly in \cite{petersenGTformality}, a choice of  Levi decomposition  of the Grothendieck--Teichm\"uller group (equivalently, a Drinfeld $1$-associator) over $\Q$ gives a formality quasi-isomorphism  $E_2 \simeq P_2$. Writing $\tau$ for the good truncation filtration $\tau_{\ge p}$ on a homological operad, a formality quasi-isomorphism automatically gives a filtered quasi-isomorphism $(E_2, \tau) \simeq (P_2, \tau)$. The filtration $\tau$ on $P_2$ gives the commutative multiplication weight $0$ and the Lie bracket weight $-1$, and we refer to $(P_2, \tau)$-algebras in complete filtered complexes as almost commutative $P_2$-algebras.

Likewise, the map in  \cite{voronovHtpyGerstenhaber} from the $E_2$ operad to the brace operad $\Br$ must preserve the good truncation filtrations. Finally, note that the good truncation filtration is contained in the filtration $\gamma$ on $\Br$ from Definition \ref{acbracedef}, since all operations of homological degree $r$ lie in $\gamma^r$, so in particular the closed operations do so. Thus every almost commutative brace algebra can be thought of as an $(E_2, \tau)$-algebra.

\begin{definition}
 Given a  Levi decomposition  $w \in \Levi_{\GT}(\Q)$ of the Grothendieck--Teichm\"uller group $\GT$ over $\Q$, we denote by $p_w$ the resulting $\infty$-functor  from almost commutative brace algebras   to almost commutative $P_2$-algebras over $\Q$ (after simplicially localising both categories at filtered quasi-isomorphisms), induced by the filtered quasi-isomorphism $(E_2, \tau) \simeq (P_2, \tau)$ as above. 
\end{definition}
For readers who prefer an honest functor, we can take a cofibrant resolution $\Omega \b (P_2,\tau)$ of the filtered operad $(P_2,\tau)$ via Koszul duality as in \cite{lodayvalletteoperads}, and we can then choose a morphism  $\Omega \b (P_2,\tau) \to (\Br,\gamma)$ of filtered operads realising the $\infty$-morphism. This induces a functor from almost commutative brace algebras to $\Omega \b (P_2,\tau)$-algebras, and  bar-cobar duality for $(P_2,\tau)$ then gives us a functor from  $\Omega \b (P_2,\tau)$-algebras to $(P_2,\tau)$-algebras, and we can take the composite to be $p_w$

Since the natural $\infty$-morphism from the Lie operad to the  $E_2$ operad is given in each arity by inclusion of the top weight term for the decreasing filtration, it follows that  
   the $\infty$-functor $p_w$ automatically commutes with the fibre functors $A \mapsto F_1A$ to the underlying filtered DGLAs, 

\begin{definition}
 For any of the definitions from \S \ref{affinesn}, we add the subscript $w$ to indicate that we are replacing $\C\C_{R,BD_1}(A) $ with $p_{w}\C\C_{R,BD_1}(A)$ in the construction. 
\end{definition}

Since the DGLAs underlying $\C\C_{R,BD_1}(A) $ and $p_{w}\C\C_{R,BD_1}(A)$  are filtered quasi-isomorphic, in particular we have canonical weak equivalences $Q\cP_w(A,0) \simeq Q\cP(A,0)$. Properties of the filtration $\tilde{F}$ then ensure that the complexes $T_{\Delta}Q\widehat{\Pol}_w(A,0)$ are filtered $(P_2,\tau)$-algebras.

\begin{definition}\label{mudef}
Given a choice $w \in \Levi_{\GT}(\Q)$ of Levi decomposition for $\GT$ and an element $\Delta \in  Q\cP(A,M;0)_w/G^j$, define 
\[
 \mu_w(-,\Delta) \co \cocone(\DR(A/R) \to \DR(B/R))\llbracket\hbar\rrbracket/\hbar^j \to T_{\Delta}Q\widehat{\Pol}_w(A;M,0)/G^j
\]
as follows.

Since $[B ,F_i\cDiff_{B/A}] \subset F_{i-1}\cDiff_{B/A}$, we have a map $B \to \gr_{\gamma F}^0 \C\C_{R,BD_1}(  \cDiff_{B/A})$. If we assume $B$ to be cofibrant, then combined with the weak equivalence $\cDiff_{B/A}\to \C\C_{R,BD_1}(A, \cDiff_{B/R})$,  this gives a commutative diagram
\[
 \begin{CD}
A @>>> B \\
@VVV @VVV \\
\gr^0_{\tilde{\gamma}} (p_w\C\C_{R,BD_1}(A))\llbracket\hbar\rrbracket/\hbar^j) @>>> \gr^0_{\widetilde{\gamma F}} (p_w\C\C_{R,BD_1}( \C\C_{R,BD_1}(A, \cDiff_{B/R}) )\llbracket\hbar\rrbracket/\hbar^j), 
 \end{CD}
\]
 where the filtrations on the bottom row are taken to be $(\widetilde{\gamma F})^p:= \prod_{i \ge p} (\gamma F)_i \hbar^i$ and the final term is weakly equivalent to $\gr^0_{\widetilde{\gamma F}} (p_w\C\C_{R,BD_1}(\cDiff_{B/A} )\llbracket\hbar\rrbracket/\hbar^j)$.

Applying Lemma \ref{mulemma2} to this diagram 
and the Maurer--Cartan elements on the bottom line induced by $\Delta$ yields a diagram
\[
\begin{CD}
 \DR(A) @>{\mu_w(-,\Delta)}>>(\hat{\Tot}\widetilde{\gamma}^0 (p_w\C\C_{R,BD_1}(A)\llbracket\hbar\rrbracket/\hbar^j), \delta+ [\Delta_A,-])\\
@VVV @VVV\\
\DR(B ) @>{\mu_w(-,\Delta)}>> (\hat{\Tot}\widetilde{\gamma F}^0  (p_w\C\C_{R,BD_1}( \cDiff_{B/A})\llbracket\hbar\rrbracket/\hbar^j), \delta + [\Delta_B,-])\\
@AAA @AAA \\
0 @>>>(\hat{\Tot}\widetilde{\gamma F}^0  (p_w\C\C_{R,BD_1,+}(  \cDiff_{B/A})\llbracket\hbar\rrbracket/\hbar^j), \delta + [\Delta_B,-]),
\end{CD}
 \]
determined up to coherent homotopy, and taking homotopy limits of the columns gives the desired map.
\end{definition}

\begin{remark}\label{cfDQvanish}
When $B=0$, this recovers the definition of $\mu_w$ from \cite[Definitions \ref{DQnonneg-mudef} and  \ref{DQnonneg-muwdef}]{DQnonneg}. When $R=A$, this definition is slightly different from that in \cite[Definition \ref{DQvanish-QPolmudef}]{DQvanish}. The construction there relied on a filtered  DGAA resolution $\DR'(B/R)$ of $\DR(B/R)$, with  \cite[Lemma \ref{DQvanish-mulemma1}]{DQvanish} giving a non-commutative analogue of Lemma \ref{mulemma2}. 

Instead, Definition \ref{mudef} effectively constructs the map $\mu_w \co \DR(B/R) \to T_{\Delta}\sD_{B/R}$ in this setting by first taking
\[
 \DR(B/R) \to p_w\hat{\Tot}\C\C_{R,BD_1}( \cDiff_{B/R})
\]
using the commutative structure underlying a $P_2$-algebra, then applying the projection $\C\C_{R,BD_1}( \cDiff_{B/R})\to \cDiff_{B/R} $. The map $\mu_w$ then  converges more quickly than the map $\mu$ in \cite{DQvanish}, but depends on a choice of formality isomorphism.

This raises the question of whether the construction of \cite{DQvanish} could be adapted to unshifted symplectic structures, giving equivalences not relying on formality. This would mean establishing an analogue of Lemma \ref{liftlemma} giving a universal property for $\DR(B/R)$ within a suitable category of filtered $E_2$-algebras. The filtered DGAA $\DR'(B/R)$ is not almost commutative, but the left and right $A$-module structures on $\gr_F\DR'(B/R)$ agree. Similarly, $\DR(B/R)$ will not have the desired universal property in $BD_2$-algebras, but the analogy raises the possibility that  it might do so in some larger category. 

\end{remark}

\subsubsection{Generalised Lagrangians}

We now fix a  cofibrant stacky CDGA $A$ over $R$, and a cofibration $A \to B$ of stacky CDGAs over $R$.

\begin{definition}
Recall that  a $0$-shifted pre-symplectic structure $\omega$ on $A/R$ is an element
\[
 \omega \in \z^{2}F^2\DR(A/R).
\]
It is called symplectic if $\omega_2 \in \z^0\Tot^{\Pi}\Omega^2_{A/R}$ induces a quasi-isomorphism
\[
 \omega_2^{\sharp} \co \hat{\HHom}_A(\Omega^1_{A/R}, A^0)\to  \Tot^{\Pi} (\Omega_{A/R}^1\ten_AA^0) 
\]
and  $\Tot^{\Pi} (\Omega_{A/R}^1\ten_AA^0)$ is a perfect complex over $A^0$.

An isotropic structure on $B$ relative to $\omega$ is  an element $(\omega, \lambda)$ of 
\[
 \z^{2}\cocone(F^2\DR(A/R)\to F^2\DR(B/R))
\]
lifting $\omega$. This structure is called Lagrangian if $\omega$ is symplectic and the image $\bar{\lambda}_2$ of $\lambda$ in  $\z^{-1}\Tot^{\Pi}\Omega^1_{B/R}\ten_B\Omega^1_{B/A}  $ 
 induces a quasi-isomorphism
\[
 \lambda_2^{\sharp} \co \hat{\HHom}_B(\Omega^1_{B/A}, B^0)\to  \Tot^{\Pi} (\Omega_{B/R}^1\ten_BB^0)^{[-1]} 
\]
and  $\Tot^{\Pi} (\Omega_{B/A}^1\ten_BB^0)$ is a perfect complex over $B^0$.
\end{definition}


\begin{definition}\label{tildeFDRdef}
Define a decreasing filtration $\tilde{F}$ on $ \DR(A/R)\llbracket\hbar\rrbracket$ by 
\[
 \tilde{F}^p\DR(A/R):= \prod_{i\ge 0} F^{p-i}\DR(A/R)\hbar^{i}.
\]

Define a further filtration $G$ by $ G^k \DR(A/R)\llbracket\hbar\rrbracket = \hbar^{k}\DR(A/R)\llbracket\hbar\rrbracket$.
\end{definition}

\begin{definition}\label{GPreSpdef}
 Define the space of generalised $0$-shifted isotropic structures on the pair $(A,B)$ over $R$ to be the simplicial set
\[
 G\Iso(A,B;0):= \mmc( \tilde{F}^2\cone(\DR(A/R)\llbracket\hbar\rrbracket  \to \DR(B/R)\llbracket\hbar\rrbracket)), 
\]
where we regard the cochain complexes as a  DGLA with trivial bracket.

Also write $G\Iso(A,B;0)/\hbar^{k}$ for the obvious truncation in terms of $\DR[\hbar]/\hbar^k$, 
 so $ G\Iso(A,B;0)= \Lim_k G\Iso(A,B;0)/\hbar^{k} $. Write $\Iso := G\Iso/\hbar$, the space of $0$-shifted isotropic structures.

Set $G\Lag(A,B;0) \subset G\Iso(A,B;0)$ to consist of the points whose images in $\Iso(A,B;0)$ are   Lagrangians on symplectic structures --- this is a union of path-components. Write $\Lag:=G\Lag/\hbar$, the space of $0$-shifted Lagrangians, and $\Sp(A,0):=\Lag(A,0;0)$, the space of $0$-shifted symplectic structures.
\end{definition}
Thus the components of $G\Iso(A,B;0)$ are just elements in 
\[
 \H^1\cone(\tilde{F}^2\DR(A/R)\brh  \to \tilde{F}^2\DR(B/R)\brh), 
\]
where $\tilde{F}^2\DR(A/R)\llbracket\hbar\rrbracket = F^2\DR(A/R)\oplus \hbar F^1 \DR(A/R) \oplus \hbar^2 \DR(A/R)\brh$ and similarly for $\tilde{F}^2\DR(B/R)\brh$, so we can think of these as power series in certain relative cohomology groups.
Equivalence classes of $n$-morphisms in $G\Iso(A,B;0)$ are then given by elements in $\H^{1-n}$ of the same complex.

\subsubsection{Compatible structures}

In addition to our morphism $A \to B$, we now fix a strict line bundle $M$ over $B$, in the sense of Definition \ref{bistrictlb}.

\begin{definition}\label{Qcompatdef}
We say that a generalised  isotropic    structure $(\omega,\lambda)$ and a quantisation $\Delta$ of the pair $(A,M)$  are  $w$-compatible (or a $w$-compatible pair) if 
\[
 [\mu_w(\omega,\lambda; \Delta)] = [-\pd_{\hbar^{-1}}(\Delta)] \in  \H^1(\tilde{F}^2T_{\Delta}Q\widehat{\Pol}_w(A,M;0)) \cong \H^1(\tilde{F}^2T_{\Delta}Q\widehat{\Pol}(A,M;0)),
\]
where $\sigma=-\pd_{\hbar^{-1}}$ is the canonical tangent vector of Definition \ref{Qsigmadef}. 
\end{definition}

This definition is chosen to lift the notion of compatibility between Poisson and symplectic structures from \cite[\S \ref{poisson-compsn}]{poisson}, in such a way that compatibility becomes a one-to-one correspondence for non-degenerate structures. As we will see, when $\Delta$ is non-degenerate it is fairly straightforward to solve for $(\omega,\lambda)$ in terms of $\Delta$   because $\mu_w(-; \Delta)$ is a filtered quasi-isomorphism.   The other direction, associating quantised co-isotropic structures to generalised isotropic structures,  will require indirect arguments in terms of obstruction theory, as in the unquantised setting. By analogy with \cite{KhudaverdianVoronov}, this correspondence can be thought of as a form of Legendre transformation.

\begin{definition}\label{vanishingdef}
Given a simplicial set $Z$, an abelian group object $A$ in simplicial sets over $Z$,  a space $X$ over $Z$ and a morphism  $s \co X \to A$ over $Z$, define the homotopy vanishing locus of $s$ over $Z$ to be the homotopy limit of the diagram
\[
\xymatrix@1{ X \ar@<0.5ex>[r]^-{s}  \ar@<-0.5ex>[r]_-{0} & A \ar[r] & Z}.
\]
\end{definition}

\begin{definition}\label{Qcompdef}
Define the space $Q\Comp_w(A,M;0)$ of quantised compatible pairs  to be the homotopy vanishing locus of  
\[
 (\mu_w - \sigma) \co G\Iso(A,B;0) \by Q\cP_w(A,M;0) \to TQ\cP_w(A,M;0)
\]
over $Q\cP_w(A,M;0)$

We define a cofiltration on this space by setting $ Q\Comp_w(A,M;0)/G^j$ to be the homotopy vanishing locus of  
\[
 (\mu_w - \sigma) \co (G\Iso(A,B;0)/G^j)  \by (Q\cP_w(A,M;0)/G^j)  \to TQ\cP_w(A,M;0)/G^j 
\]
over $Q\cP_w(A,M;0)/G^j $.
\end{definition}

Thus an element of $Q\Comp_w(A,M;0)$ consists of data $(\omega, \lambda, \Delta,\alpha)$, where $(\omega, \lambda)$ is a  generalised isotropic structure, $\Delta$ a quantisation of $(A,M)$, and $\alpha$ a homotopy between $\mu_w(\omega,\lambda,\Delta)$ and $\sigma(\Delta)$.

\begin{definition}
 Define $Q\Comp_w(A,M;0)^{\nondeg} \subset Q\Comp_w(A,M;0)$ to consist of $w$-compatible quantised pairs $(\omega, \Delta)$ with $\Delta$ non-degenerate. This is a union of path-components, and by \cite[Lemma \ref{poisson-compatnondeg}]{poisson} any pre-symplectic form compatible with a non-degenerate quantisation is symplectic. The same argument shows that any isotropic pair compatible with a non-degenerate quantisation is Lagrangian, so there is a natural projection 
\[
 Q\Comp_w(A,M;0)^{\nondeg}\to G\Lag(A,B;0)
\]
as well as the canonical map
\[
 Q\Comp_w(A,M;0)^{\nondeg} \to Q\cP_w(A,M;0)^{\nondeg}.
\]
\end{definition}

\subsection{The equivalences}

The essential idea of the following proposition is that non-degeneracy of a quantisation $\Delta$ ensures that $\mu_w(-,\Delta)$ is a filtered quasi-isomorphism, so the generalised Lagrangian data $(\omega, \lambda)$ associated to $\Delta$ are given by
\[
 -\mu_w(-,\Delta)^{-1} (\pd_{\hbar^{-1}}\Delta).
\]
\begin{proposition}\label{QcompatP1} 
For any Levi decomposition $w$ of $\GT$,  the canonical map
\begin{eqnarray*}
    Q\Comp_w(A,M;0)^{\nondeg} \to  Q\cP_w(A,M;0)^{\nondeg}\simeq  Q\cP(A,M;0)^{\nondeg}          
\end{eqnarray*}
 is a weak equivalence. In particular, $w$ gives rise to  a morphism
\[
  Q\cP(A,M;0)^{\nondeg} \to G\Lag(A,B;0)
\]
(from non-degenerate quantisations to generalised Lagrangians)
in the homotopy category of simplicial sets.
\end{proposition}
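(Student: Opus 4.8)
The plan is to follow the strategy of the analogous comparison results in \cite{poisson,DQvanish,DQnonneg}. First, I would note that the last assertion is formal once the first is known: given the weak equivalence $Q\Comp_w(A,M;0)^{\nondeg}\xra{\sim}Q\cP_w(A,M;0)^{\nondeg}\simeq Q\cP(A,M;0)^{\nondeg}$, one inverts it in the homotopy category of simplicial sets and composes with the projection $Q\Comp_w(A,M;0)^{\nondeg}\to G\Lag(A,B;0)$ recorded just before the statement --- this projection exists because, by \cite[Lemma \ref{poisson-compatnondeg}]{poisson} and the Lagrangian variant of its proof noted above, an isotropic pair compatible with a non-degenerate quantisation is automatically Lagrangian. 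The identification $Q\cP_w(A,M;0)^{\nondeg}\simeq Q\cP(A,M;0)^{\nondeg}$ is immediate, since the DGLAs underlying $\C\C_{R,BD_1}(A)$ and $p_w\C\C_{R,BD_1}(A)$ are filtered quasi-isomorphic and non-degeneracy depends only on the second-order term $\Delta_2$.

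The real content is thus the weak equivalence $Q\Comp_w(A,M;0)^{\nondeg}\to Q\cP_w(A,M;0)^{\nondeg}$. Unwinding Definitions \ref{vanishingdef} and \ref{Qcompdef}, $Q\Comp_w(A,M;0)$ is the homotopy vanishing locus of $\mu_w-\sigma$ over $Q\cP_w(A,M;0)$, so the homotopy fibre of the forgetful map over a non-degenerate $\Delta$ is the homotopy fibre of $\mu_w(-,\Delta)\co G\Iso(A,B;0)\to T_\Delta Q\cP_w(A,M;0)$ over the point $\sigma(\Delta)=-\pd_{\hbar^{-1}}\Delta$. Since $Q\cP_w(A,M;0)^{\nondeg}$ is a union of path components, it suffices to prove that $\mu_w(-,\Delta)$ is a weak equivalence for every non-degenerate $\Delta$: the homotopy fibres of the forgetful map are then contractible, in particular non-empty, so it is a weak equivalence. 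Both domain and target are Maurer--Cartan spaces of complete filtered, hence pro-nilpotent, DGLAs, and $\mu_w(-,\Delta)$ induces this map of Maurer--Cartan spaces from the morphism of complexes of Definition \ref{mudef}; since the Maurer--Cartan functor carries filtered quasi-isomorphisms to weak equivalences, it is enough to show that
\[
 \mu_w(-,\Delta)\co \tilde{F}^2\cone(\DR(A/R)\to\DR(B/R))\brh \to \tilde{F}^2 T_\Delta Q\widehat{\Pol}_w(A,M;0)[1]
\]
is a filtered quasi-isomorphism, which I would test against the $\hbar$-adic filtration $G$.

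The construction of $\mu_w$ respects $G$, so it remains to treat each $\gr_G^i$. On the source, $\gr_G^i\tilde{F}^p\cone(\DR(A/R)\to\DR(B/R))\brh\simeq\cone(F^{p-i}\DR(A/R)\to F^{p-i}\DR(B/R))$, while Lemma \ref{gradedcalclemma} identifies $\gr_G^i\tilde{F}^p T_\Delta Q\widehat{\Pol}_w(A,M;0)$ with the cocone of
\[
 \prod_{j\ge p}\hat{\HHom}_A(\Omega^{j-i}_{A/R},A)\hbar^{j-1}[i-j]\xra{S}\prod_{j\ge p}\hat{\HHom}_B(\oL\CoS^{j-i}_B\bL_{B/A},B)\hbar^{j-1},
\]
the $[\Delta,-]$ part of the differential contributing nothing on $\gr_G$ since $\Delta\in G^1$. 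Tracing the leading term of $\mu_w$ through the bar--cobar resolution of $A$, the formality functor $p_w$, and the projection $\C\C_{R,BD_1}(\cDiff_{B/A})\to\cDiff_{B/A}$, exactly as in the proof of Lemma \ref{gradedcalclemma}, one should find that the induced map on each $\hbar^{j-1}$-summand of $\gr_G^i$ is, up to coherent homotopy, $\oL\CoS^{j-i}$ applied to the pair of maps $\Delta_{2,A}^\sharp\co\Omega^1_{A/R}\to\hat{\HHom}_A(\Omega^1_{A/R},A)$ and $\Delta_{2,B}^\sharp\co\Omega^1_{B/R}\to\oR\hat{\HHom}_B(\bL_{B/A},B)[1]$, made compatible by the homotopy-commutative square preceding Definition \ref{Qnondegdef}. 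Non-degeneracy of $\Delta$ is precisely the statement that these maps become quasi-isomorphisms after $-\ten_A A^0$ and $-\ten_B B^0$, with $\Tot^{\Pi}(\Omega^1_{A/R}\ten_A A^0)$ and $\Tot^{\Pi}(\Omega^1_{B/R}\ten_B B^0)$ perfect; together with Assumption \ref{biCDGAprops} this lets one commute $\oL\CoS$ past $\hat{\HHom}$ and conclude that each such map, and hence $\mu_w(-,\Delta)$ on every $\gr_G^i$, is a quasi-isomorphism.

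The principal obstacle is exactly this last identification --- that the $\gr_G$-map is $\oL\CoS^\bullet$ of the pair $(\Delta_{2,A}^\sharp,\Delta_{2,B}^\sharp)$. This amounts to propagating the leading ($\Delta_2$) term of $\mu_w$ through all the machinery of \S\ref{centresn} (the semidirect product $E_{[1]}\rtimes C$, the brace-algebra resolution of Lemma \ref{barcobarprop2}, the comparison $E_2\simeq P_2$ given by $p_w$) and through Lemma \ref{mulemma2}, and is the relative, line-bundle-twisted counterpart of the corresponding computations in \cite{poisson} and \cite{DQvanish}; the perfectness hypotheses of Definition \ref{Qnondegdef} are what legitimise the duality manipulations at this step.
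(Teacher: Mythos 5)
Your proposal is correct and follows essentially the same route as the paper: the paper's proof likewise reduces to showing that non-degeneracy of $\Delta$ makes $\mu_w(-,\Delta)$ a filtered quasi-isomorphism (so the homotopy fibres of $Q\Comp_w^{\nondeg}\to Q\cP_w^{\nondeg}$ are contractible and the compatible generalised Lagrangian is $-\mu_w(-,\Delta)^{-1}(\pd_{\hbar^{-1}}\Delta)$), deferring the associated-graded computation to the analogous arguments in \cite{poisson} and \cite{DQnonneg}, just as you do by invoking Lemma \ref{gradedcalclemma} and the identification of the graded pieces of $\mu_w$ with symmetric powers of $\Delta_2^{\sharp}$.
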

\begin{proof}
The proof of \cite[Proposition \ref{poisson-compatP1}]{poisson} adapts to this context, along much the same lines as \cite[Proposition \ref{DQnonneg-QcompatP1}]{DQnonneg}.

For any $\Delta \in Q\cP_w(A,M;0)^{\nondeg}$, the homotopy fibre of $Q\Comp_w(A,M;0)^{\nondeg} $ over $\Delta$ is just the homotopy fibre of
\[
\mu_w(-,\Delta)  \co  G\Iso(A,B;0)  \to T_{\Delta}Q\cP_w(A,M;0)
\]
over $-\pd_{\hbar^{-1}}(\Delta)$, so it suffices to show that this map is a weak equivalence.

By construction, the map
\[
 \mu_w(-,\Delta) \co \cocone(\DR(A/R) \to \DR(B/R))\llbracket\hbar\rrbracket \to T_{\Delta}Q\widehat{\Pol}_w(A,M;0)
\]
 is a morphism of complete $\tilde{F}$-filtered $R\llbracket\hbar\rrbracket$-CDGAs. Moreover, it maps $\hbar^k\tilde{F}^p\cocone(\DR(A/R) \to \DR(B/R))\llbracket\hbar\rrbracket$ to $   G^k\tilde{F}^pT_{\Delta}Q\widehat{\Pol}_w(A,M;0)$. 
 Non-degeneracy of $\Delta_2$  implies that $\mu_w(-,\Delta)$ induces  quasi-isomorphisms
\begin{align*}
  \Tot^{\Pi}\Omega^{p-k}_A\hbar^{k}[k-p] &\to \hat{\HHom}_A(\Omega^{p-k}_{A}, A)\hbar^{p}[k-p]\\
 \Tot^{\Pi} \Omega^{p-k}_B\hbar^{k}[k-p] &\to \hat{\HHom}_B(\CoS^{p-k}\bL^{B/A}, B)\hbar^{p}
  \end{align*}
on the associated gradeds $\gr_G^k\gr_{\tilde{F}}^p$.  We therefore have a quasi-isomorphism of bifiltered complexes, so we have isomorphisms on homotopy groups:
\begin{eqnarray*}
 \pi_jG\Iso(A,B;0)  &\to& \pi_jT_{\Delta}Q\cP(A,M;0)\\
 \H^{2-j}(\tilde{F}^2 \cocone(\DR(A/R) \to \DR(B/R))\llbracket\hbar\rrbracket) &\to&  \H^{2-j}(\tilde{F}^2T_{\Delta}Q\widehat{\Pol}(A,M;0)).\qedhere
\end{eqnarray*}
\end{proof}

Write $\widehat{\Pol}(A,B;0):=Q\widehat{\Pol}(A,M;0)/G^1$, with a filtration $F$ given by the image of the filtration $\tilde{F}$, then also write $\Comp$, $\cP$, $\Lag$ and $\Iso$ for $Q\Comp_w/G^1$, $Q\cP/G^1$, $G\Lag/G^1$ and $ G\Iso/G^1$, respectively. Note that since $\widehat{\Pol}(A,B;0)$ is already a $P_2$-algebra, the space $\Comp$ is  independent of the Levi decomposition $w$ of $\GT$.

The following proposition establishes an equivalence between Lagrangians and non-degenerate co-isotropic Poisson structures in the $0$-shifted setting:
\begin{proposition}\label{compatcor2}
The canonical maps
\begin{eqnarray*}
   \Comp(A,B;0)^{\nondeg} &\to&  \cP(A,B;0)^{\nondeg} \\ 
\Comp(A,B;0)^{\nondeg} &\to& \Lag(A,B;0)              
\end{eqnarray*}
 are weak equivalences.
\end{proposition}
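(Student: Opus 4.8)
The strategy is to reduce both maps to statements about the underlying cochain complexes, exploiting the fact that in the classical (unquantised) limit $G^1 = 0$ the relevant DGLAs become abelian and the compatibility datum $\mu_w$ reduces to an honest morphism of complexes whose mapping cone is contractible precisely in the non-degenerate locus.

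First I would unwind the definitions. Setting $G^1=0$, the space $\cP(A,B;0)$ is $\mmc$ of the DGLA $\tilde F^2\widehat{\Pol}(A,B;0)[1]$, where $\widehat{\Pol}(A,B;0)$ is the cocone of $\C\C_{R,BD_1}(A) \to \C\C_{R,BD_1}(A,\cDiff_{B/R})$ with $\hbar$ set to $0$; this is already a $P_2$-algebra, so no choice of $w$ intervenes (as noted just before the statement). Using Lemma \ref{gradedcalclemma} with $G$-gradeds collapsed, $\widehat{\Pol}(A,B;0)$ is identified, as a filtered complex, with the cocone of the polyvector complexes $\prod_j \hat{\HHom}_A(\Omega^j_{A/R},A)[-j] \to \prod_j \hat{\HHom}_B(\oL\CoS^j_B\bL_{B/A},B)[-j]$, with Lie bracket of $F$-weight $-1$. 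Likewise, at $G^1=0$, the map $\mu_w(-,\Delta)$ of Definition \ref{mudef} becomes the map $\mu(-,\pi)$ of Lemma \ref{mulemma2}: it sends $\DR(A)$ to polyvectors via the Poisson structure $\pi$ associated to $\Delta = \Delta_2$, compatibly with the analogous construction downstairs on $\DR(B)$. So $\Comp(A,B;0)$ is the homotopy vanishing locus of $\mu - \sigma$, where now $\sigma(\pi) = \pi$ itself (the canonical tangent vector is the identity in the classical limit), over $\cP(A,B;0)$.

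Next, for the equivalence $\Comp(A,B;0)^{\nondeg}\to\cP(A,B;0)^{\nondeg}$, the key point is that non-degeneracy of $\pi$ forces $\mu(-,\pi)$ to be a filtered quasi-isomorphism from $\cone(\DR(A/R)\to\DR(B/R))$ to $T_\pi\widehat{\Pol}(A,B;0)$, exactly as in \cite[Proposition \ref{poisson-compatP1}]{poisson} and \cite[Proposition \ref{DQnonneg-QcompatP1}]{DQnonneg}. Indeed the associated graded of $\mu$ with respect to $\tilde F$ is, up to the identifications above, the map induced by contraction with $\pi_2$ on $\Omega^1$ together with the Lagrangian-type comparison on the $B$-side; non-degeneracy says precisely that $\pi_{2,A}^\sharp$ and $\pi_{2,B}^\sharp$ are quasi-isomorphisms of perfect complexes, and a cone-and-five-lemma argument upgrades this to all the higher symmetric powers, hence to a filtered quasi-isomorphism. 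Therefore the homotopy fibre of $\Comp^{\nondeg}\to\cP^{\nondeg}$ over a point $\pi$ is the homotopy vanishing locus over a point of a quasi-isomorphism of complexes, which is contractible; so the map is a weak equivalence. The explicit inverse sends $\pi$ to the generalised isotropic structure $-\mu(-,\pi)^{-1}(\pi)\in\mmc(\tilde F^2\cone(\DR(A/R)\to\DR(B/R)))$, and \cite[Lemma \ref{poisson-compatnondeg}]{poisson} (applied on both the $A$- and $B$-sides, as in the definition of $Q\Comp^{\nondeg}$) guarantees this lands in $\Lag(A,B;0)$ rather than merely $\Iso(A,B;0)$.

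For the second map $\Comp(A,B;0)^{\nondeg}\to\Lag(A,B;0)$, I would run the same argument with the roles of $\cP$ and $\Lag$ interchanged: the composite $\Comp^{\nondeg}\to\Lag(A,B;0)$ is, up to the weak equivalence just established, the assignment $\pi\mapsto -\mu(-,\pi)^{-1}(\pi)$. To see this is an equivalence, note that $\Lag(A,B;0)$ is a union of path components of $\mmc(\tilde F^2\cone(\DR(A/R)\to\DR(B/R)))$, and one produces a homotopy inverse by sending a generalised Lagrangian $(\omega,\lambda)$ to the Poisson/co-isotropic structure obtained by applying $\mu$ in reverse — precisely the content of the "obstruction calculus shows that the two notions are equivalent" step of \cite{poisson}, adapted as in \cite{DQnonneg}. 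The point is again that $\omega_2$ being symplectic and $\lambda_2$ being Lagrangian make the relevant map of filtered complexes a quasi-isomorphism, so the homotopy vanishing locus construction is reversible. I would then invoke 2-out-of-3 with the first equivalence to conclude.

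\textbf{The main obstacle.} The delicate step is the same one that is delicate in \cite{poisson} and \cite{DQnonneg}: verifying that non-degeneracy at the level of $\Omega^1$ (i.e. $\pi_{2,A}^\sharp$ and $\pi_{2,B}^\sharp$ being quasi-isomorphisms of perfect complexes) propagates to make $\gr_{\tilde F}\mu(-,\pi)$ a quasi-isomorphism in every $F$-weight, i.e. on all the symmetric and co-symmetric powers of the (co)tangent complexes, \emph{and} that the relative contributions from $A$ and from $B$ fit together compatibly in the cone — one must check that the connecting map $S\co\bL_{B/A}\to\Omega^1_{A/R}[1]$ intertwines the two $\pi_2^\sharp$'s up to the canonical homotopy recorded after Lemma \ref{gradedcalclemma}, so that the cones are quasi-isomorphic. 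This is essentially a diagram chase using perfectness (so that dualising commutes with the relevant limits and $\hat{\HHom}$, $\hat{\Tot}$ are well-behaved), but it requires care with the completed tensor products and the $\hat{\Tot}$-functors, exactly as in \cite[\S\ref{poisson-bipoisssn}]{poisson}; everything else is formal manipulation of homotopy vanishing loci and $\mmc$ of abelian DGLAs.
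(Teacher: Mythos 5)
Your treatment of the first map is essentially the paper's: the paper deduces $\Comp(A,B;0)^{\nondeg}\to\cP(A,B;0)^{\nondeg}$ by noting that the equivalences of Proposition \ref{QcompatP1} respect the cofiltration $G$, which is exactly your "non-degeneracy makes $\mu(-,\pi)$ a filtered quasi-isomorphism, so the vanishing locus over a point is contractible" argument run modulo $G^1$. That half is fine.

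For the second map there is a genuine gap. Your plan is to "apply $\mu$ in reverse" and then "invoke 2-out-of-3 with the first equivalence", but 2-out-of-3 is circular here: there is no independently established equivalence $\cP(A,B;0)^{\nondeg}\simeq\Lag(A,B;0)$ to play the third leg --- that equivalence is precisely what the proposition asserts, and it is only obtained \emph{through} $\Comp$. Nor does the quasi-isomorphism property of $\mu(-,\pi)$ by itself make the vanishing-locus construction "reversible": $\mu(\omega,\lambda,\pi)$ is nonlinear in the co-isotropic variable $\pi$, so producing $\pi$ from a given Lagrangian $(\omega,\lambda)$ means solving the equation $\mu(\omega,\lambda,\pi)=\sigma(\pi)$ order by order. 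The paper does this by induction on the filtration $F$: the base case $F^3$ identifies the essentially unique compatible $(\varpi,\pi)$ as the symmetric square of the homotopy inverse of $(\omega,\lambda)^{\sharp}$, and the inductive step compares fibre sequences for $\Comp^{\nondeg}/F^{p+1}\to\Comp^{\nondeg}/F^{p}$ and $\Lag/F^{p+1}\to\Lag/F^{p}$, where the obstruction space $M(\omega,\lambda,\varpi,\pi,p)$ is controlled by the operator $\nu-(p-1)$, with the tangent maps computed (via the argument of \cite[Lemma \ref{DQvanish-tangentlemma}]{DQvanish}) as $\nu(\omega,\varpi)\simeq p(\varpi^{\sharp}\circ\omega^{\sharp})\simeq p$ and $\nu(\lambda,\pi)\simeq p(\pi^{\sharp}\circ\lambda^{\sharp})\simeq p$, so that $p-(p-1)=1$ is invertible and $M\simeq\hat{\Tot}\cocone(\Omega^p_{A/R}\to\Omega^p_{B/R})[1-p]$. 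None of this appears in your sketch beyond a pointer to \cite{poisson}; in particular you never address the tangent computation, which is where non-degeneracy of \emph{both} $\varpi^{\sharp}\circ\omega^{\sharp}$ and $\pi^{\sharp}\circ\lambda^{\sharp}$ enters on the relative ($B$-side) piece. Relatedly, your claim that $\sigma$ becomes the identity in the classical limit is wrong: $\sigma$ acts on $F$-weight $p$ with eigenvalue $p-1$ (this is exactly why the operator in the induction is $\nu-(p-1)$); it happens not to damage your first-map argument, but it signals that the obstruction calculus you defer to has not actually been set up. Your "main obstacle" paragraph instead highlights the propagation of non-degeneracy to higher symmetric powers, which is the delicate point for the \emph{first} map, not for this one.
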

\begin{proof}
The first equivalence is given by observing that the equivalences in  Proposition \ref{QcompatP1} respect the cofiltration $G$.  For the second equivalence, we adapt the proofs  of \cite[Corollary \ref{poisson-compatcor1} and Proposition \ref{poisson-level0prop}]{poisson}, establishing the equivalence by induction on the filtration $F$.

The space $\Lag(A,B;0)/F^3$ is just given by elements $(\omega, \lambda)$ in the cocone of $\hat{\Tot}\Omega^2_{A/R}\to \hat{\Tot}\Omega^2_{B/R}$ which are non-degenerate in the sense that the induced map $(\omega, \lambda)^{\sharp}$ induces a quasi-isomorphism
\[
\begin{CD}
\hat{\HHom}_A(\Omega^1_{A/R},A^0) @>{S}>> \hat{\HHom}_B(\Omega^1_{B/A}, B^0)[1]\\
@V{\omega^{\sharp}}VV @VV{\lambda^{\sharp}}V \\
 \hat{\Tot}(\Omega^1_{A/R}\ten_AA^0) @>>> \hat{\Tot}(\Omega^1_{B/R}\ten_BB^0)
\end{CD}
\]
of diagrams. 
Since $ \cP(A,B;0)/F^3$ is given by elements $(\varpi,\pi)$ in the cocone of $S \co \hat{\HHom}_A(\Omega^2_{A/R},A) \to \hat{\HHom}_B(\CoS^2_B\Omega^1_{B/A}, B)[2] $, the essentially unique Poisson structure compatible with $(\omega, \lambda)$ is just given by the image of $(\omega, \lambda)$  under the symmetric square of the homotopy inverse of $(\omega, \lambda)^{\sharp}$, so
\[
 \Comp(A,B;0)^{\nondeg}/F^3 \xra{\sim} \Lag(A,B;0)/F^3.   
\]

By \cite[Proposition \ref{poisson-obsDGLA}]{poisson}, we have fibration sequences 
\begin{align*}
 \cP(A,B;0)/F^{p+1} \to &\cP(A,B;0)/F^{p} \to  \mmc (\gr_F^{p}\widehat{\Pol}(A,B;0)^{[2]} )\\
T\cP(A,B;0)/F^{p+1} \to &T\cP(A,B;0)/F^{p} \to  \mmc (\gr_F^{p}\widehat{\Pol}(A,B;0)^{[2]}[\eps] )\\
\Iso(A,B;0)/F^{p+1} \to &\Iso(A,B;0)/F^{p} \to  \mmc (\cocone(\Omega^{p}_{A/R} \to \Omega^p_{B/R})^{[2-p]}),
\end{align*}
for $\eps^2=0$.

We can then combine these, defining a complex $
 M(\omega,\lambda,\varpi,\pi,p) 
 $
 to be the homotopy limit of the diagram
\[
 \begin{CD}
  \hat{\Tot}\Omega^{p}_{A/R}[1-p] @>>> \hat{\Tot}\Omega^{p}_{B/R}[1-p]\\
@V{\L^{p}(\varpi^{\flat})}VV  @VV{\L^{p}(\pi^{\flat})}V\\
\hat{\HHom}_A(\Omega^p_{A/R},A)[1-p] @>S>> \hat{\HHom}_B( \CoS^p_B\Omega^1_{B/A},B)[1] \\  
@A{\nu(\omega, \varpi) - (p-1)}AA @AA{\nu(\lambda, \pi) - (p-1)}A \\
 \hat{\HHom}_A(\Omega^p_{A/R},A)[1-p] @>S>> \hat{\HHom}_B( \CoS^p_B\Omega^1_{B/A},B)[1].
 \end{CD}
\]
 Here $\nu(\omega, \varpi)$ is the tangent map  at $\varpi$ of the map $\mu(\omega, -)$ from Lemma \ref{mulemma} (though in this unquantised setting,   \cite[Lemma \ref{poisson-keylemma}]{poisson} suffices), given by 
\[
 \mu(\omega, \varpi+ \rho \eps) = \mu(\omega, \varpi) + \nu(\omega, \varpi)(\rho)\eps
\]
for  $\eps^2=0$, with  $\nu(\lambda, \pi)$ defined similarly.                        

Taking homotopy fibre products  similarly to \cite[Proposition \ref{poisson-compatobs}]{poisson}, we then have 
 a commutative diagram  
\[
\begin{CD}
 (\Comp(A,B;0)^{\nondeg}/F^{p+1})_{(\omega,\lambda,\varpi, \pi)} @>>>(\Lag(A,B;0)/F^{p+1})_{(\omega,\lambda)}\\
@VVV @VVV \\
(\Comp(A,B;0)^{\nondeg}/F^{p})_{(\omega,\lambda,\varpi,\pi)} @>>>(\Lag(A,B;0)/F^{p})_{(\omega, \lambda)}\\
@VVV @VVV \\
\mmc(M(\omega,\lambda,\varpi,\pi,p)[1])@>>>\mmc(\hat{\Tot}\cocone (\Omega^{p}_{A/R}\to \Omega^{p}_{B/R})[2-p])
\end{CD}
\]
of  fibre sequences.

Arguing as in \cite[Lemma \ref{DQvanish-tangentlemma}]{DQvanish},  if we sum over all $p$ in the diagram defining $M(\omega,\lambda,\varpi,\pi,p)$, then the maps $\nu(\omega, \varpi)$ and  $\nu(\lambda, \pi)$ are derivations with respect to the commutative multiplications, so are determined by generators  $\gr_F^1$, giving $\nu(\omega, \varpi)\simeq p \Lambda^p(\varpi^{\flat} \circ \omega^{\sharp})$ and $\nu(\lambda, \pi)\simeq p\Symm^p(\pi^{\flat} \circ \lambda^{\sharp})$. Since we are in the non-degenerate setting, $\varpi^{\flat} \circ \omega^{\sharp}$ and $\pi^{\flat} \circ \lambda^{\sharp}$ are homotopic to the identity maps on their respective spaces, so $\nu(\omega, \varpi)$ and $\nu(\lambda, \pi)$ are compatibly homotopic to multiplication by $p$. 
Because $p-(p-1)$ is invertible, we then get
\[
 M(\omega,\lambda,\varpi,\pi,p) \simeq \hat{\Tot}\cocone (\Omega^{p}_{A/R}\to \Omega^{p}_{B/R})[1-p].
\]
Substituting in the diagram of fibre sequences then gives 
\begin{align*}
 &(\Comp(A,B;0)^{\nondeg}/F^{p+1}) \\
&\simeq(\Comp(A,B;0)^{\nondeg}/F^{p})\by^h_{(\Lag(A,B;0)/F^{p}) }(\Lag(A,B;0)/F^{p+1}),
\end{align*}
from  which the desired equivalence $(\Comp(A,B;0)^{\nondeg}/F^{p+1})\simeq (\Lag(A,B;0)/F^{p+1})$ follows by induction. 
\end{proof}

\begin{proposition}\label{quantprop}
For any Levi decomposition $w$ of $\GT$, the   maps
\begin{align*}
 &Q\cP_w(A,M;0)^{\nondeg}/G^j  \\
&\to(Q\cP_w(A,M;0)^{\nondeg}/G^2)\by^h_{(G\Lag(A,B;0)/G^2)}(G\Lag(A,B;0)/G^j) \\ 
&\simeq (Q\cP_w(A,M;0)^{\nondeg}/G^2)\by \prod_{2 \le i<j } \mmc(\cone(\DR(A/R) \to \DR(B/R))\hbar^i)
\end{align*}
coming from Proposition \ref{QcompatP1}  are weak equivalences for all $j \ge 2$.
\end{proposition}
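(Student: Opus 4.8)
The plan is to argue by induction on $j$, reducing via Proposition \ref{QcompatP1} to a statement purely about the compatibility spaces and their projections to generalised Lagrangians. Since the equivalence $Q\Comp_w(A,M;0)^{\nondeg}\simeq Q\cP_w(A,M;0)^{\nondeg}$ of Proposition \ref{QcompatP1} is built from the compatibility map $\mu_w$ and the canonical tangent vector $\sigma=-\pd_{\hbar^{-1}}$, both of which are compatible with the cofiltration $G$, it descends to equivalences $Q\cP_w^{\nondeg}/G^j\simeq Q\Comp_w^{\nondeg}/G^j$ for every $j$; and since $G\Lag(A,B;0)$ is, by Definition \ref{GPreSpdef}, the Maurer--Cartan space of the abelian DGLA $\tilde{F}^2\cone(\DR(A/R)\to\DR(B/R))\brh$, whose $G$-graded pieces in weights $2\le i<j$ are the cone complexes $\cone(\DR(A/R)\to\DR(B/R))\hbar^i$, the target in the statement is identified with $(Q\Comp_w^{\nondeg}/G^2)\by^h_{(G\Lag/G^2)}(G\Lag/G^j)$. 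So the content is that the square
\[
 \begin{CD}
 Q\Comp_w(A,M;0)^{\nondeg}/G^{j} @>>> Q\Comp_w(A,M;0)^{\nondeg}/G^{2} \\
 @VVV @VVV \\
 G\Lag(A,B;0)/G^{j} @>>> G\Lag(A,B;0)/G^{2}
 \end{CD}
\]
is homotopy cartesian, the vertical maps being the projections of Proposition \ref{QcompatP1}.

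By a d\'evissage along $G$ it is enough to show that each one-step square relating $/G^{j+1}$ to $/G^{j}$ is homotopy cartesian for $j\ge 2$, the case $j=2$ being vacuous, and for this I would compare the homotopy fibres of the two horizontal maps. On the $G\Lag$ side, the fibre of $G\Lag/G^{j+1}\to G\Lag/G^{j}$ over a Lagrangian $(\omega,\lambda)$ is $\mmc\bigl(\gr^{j}_G\tilde{F}^2\cone(\DR(A/R)\to\DR(B/R))\brh\bigr)$, which equals $\mmc(\cone(\DR(A/R)\to\DR(B/R))\hbar^{j})$ since $\tilde{F}^2\DR\brh$ is the whole complex in $\hbar$-degrees $\ge 2$; this is a generalised Eilenberg--MacLane space with homotopy $\H^{1-*}(\cone(\DR(A/R)\to\DR(B/R)))$. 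On the $Q\Comp_w$ side, using that $Q\Comp_w(A,M;0)$ is by Definition \ref{Qcompdef} the homotopy vanishing locus of $(\mu_w-\sigma)$ over $Q\cP_w(A,M;0)$, the corresponding fibre is the total homotopy fibre of the square of one-step $G$-graded fibres attached to $\mu_w-\sigma$ for the spaces $G\Iso(A,B;0)$, $Q\cP_w(A,M;0)$ and $TQ\cP_w(A,M;0)$; here the first fibre is again $\mmc(\cone(\DR(A/R)\to\DR(B/R))\hbar^{j})$, while the other two are controlled by $\gr^{j}_G\tilde{F}^2 Q\widehat{\Pol}_w(A,M;0)$ with differential twisted by $[\Delta,-]$, whose shape is given by Lemma \ref{gradedcalclemma}.

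The decisive input is non-degeneracy of $\Delta$: as in the proof of Proposition \ref{QcompatP1}, $\mu_w(-,\Delta)$ is then a filtered quasi-isomorphism $\cocone(\DR(A/R)\to\DR(B/R))\brh\to T_{\Delta}Q\widehat{\Pol}_w(A,M;0)$, hence an isomorphism on every $\gr_G$. Feeding the defining constraint $\mu_w(\omega,\lambda)=\sigma(\Delta)$ through this identification, and computing the tangent map of $\mu_w$ exactly as in the proof of Proposition \ref{compatcor2} (cf. \cite[Lemma \ref{DQvanish-tangentlemma}]{DQvanish}), the deformation and obstruction groups governing lifts of $\Delta$ cancel those governing lifts of $(\omega,\lambda)$, up to the canonical shift turning $\cocone$ into $\cone$; the total homotopy fibre on the $Q\Comp_w$ side therefore collapses to $\mmc(\cone(\DR(A/R)\to\DR(B/R))\hbar^{j})$, and the comparison map on fibres is an equivalence, completing the induction. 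This is the same computation that appears in \cite{poisson,DQvanish,DQnonneg}, transplanted to the relative setting of a morphism $A\to B$ and a line bundle $M$.

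The step I expect to be the main obstacle is precisely this last fibre comparison. One has to keep the two filtrations $\tilde{F}$ and $G$ cleanly separated while the Hochschild-type differential carries the twist $[\Delta,-]$, and to check that $\mu_w$ induces a quasi-isomorphism on each associated graded $\gr^{j}_G$ rather than merely on the $\hbar$-adic completion, so that the obstruction to extending a quantisation is genuinely a first-order (i.e. $G^2/G^3$) phenomenon and everything above is governed by the formal, abelian de Rham cone. A minor further point requiring care is that the superscript $\nondeg$ cuts out a union of path components stable under all the truncations in play, so that non-degeneracy is inherited level by level and Proposition \ref{QcompatP1} may be applied at each finite stage $/G^j$.
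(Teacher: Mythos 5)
Your overall architecture coincides with the paper's: transfer along Proposition \ref{QcompatP1} to $Q\Comp_w^{\nondeg}/G^j$, induct one $G$-step at a time, identify the one-step fibre of $G\Iso(A,B;0)$ with $\mmc(\cone(\DR(A/R)\to\DR(B/R))\hbar^j)$ (using that the DGLA is abelian and $\tilde{F}^2$ is no condition in $\hbar$-weights $\ge 2$, which also gives the product splitting in the statement), and compare with the one-step fibre on the $Q\Comp_w$ side, which is the homotopy fibre of the linearisation of $\mu_w-\sigma$ on graded pieces. The gap is precisely at the step you flag as "the main obstacle" and then dispose of by asserting that the groups governing lifts of $\Delta$ "cancel" those governing lifts of $(\omega,\lambda)$. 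That fibre is the homotopy fibre of a map out of the \emph{product} of the two deformation spaces into the constraint space $\gr_G^j\tilde{F}^2 T_{(\varpi,\pi)}\widehat{\Pol}$, so "cancellation" between the two factors is not a meaningful operation there; and the input you call decisive --- that $\mu_w(-,\Delta)$ is a filtered quasi-isomorphism --- only identifies the constraint space with de Rham data, which by itself shows the fibre is equivalent to the space of $\Delta$-deformations, not that its projection to the $G\Iso$-fibre is an equivalence. What is actually needed is that the linearisation in the $\Delta$-direction, namely $\nu(\omega,\lambda,\varpi,\pi)+\pd_{\hbar^{-1}}\co F^{2-j}T_{(\varpi,\pi)}\widehat{\Pol}(A,B;0)\hbar^{j-1}\to F^{2-j}T_{(\varpi,\pi)}\widehat{\Pol}(A,B;0)\hbar^{j}$, be a quasi-isomorphism.

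That is the computation the paper imports from \cite[Proposition \ref{DQnonneg-quantprop}]{DQnonneg}: on $\gr_F^p$ the tangent map $\nu(\omega,\lambda,\varpi,\pi)$ is homotopic to multiplication by $p$ (here non-degeneracy enters, exactly as in Proposition \ref{compatcor2} via \cite[Lemma \ref{DQvanish-tangentlemma}]{DQvanish}), while the $\hbar$-weight bookkeeping for $\pd_{\hbar^{-1}}=-\hbar^2\pd_\hbar$ contributes $(1-p-j)\hbar$, so $\gr_F^p(\nu+\pd_{\hbar^{-1}})\simeq (1-j)\hbar$; this is invertible precisely because $j\ge 2$, and completeness of $F$ then gives $N(\omega,\lambda,\varpi,\pi,j)\simeq\cocone(F^{2-j}\DR(A/R)\to F^{2-j}\DR(B/R))\hbar^j$, whence the cartesian one-step squares and the proposition. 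Your write-up never invokes the hypothesis $j\ge 2$, and the same "cancellation" heuristic would apply verbatim at $j=1$, where the conclusion is false: the $G^1\to G^2$ step carries the genuine first-order obstruction (this is why self-duality is introduced in \S\ref{sdsn}, and note that "first order" means $G^1/G^2$, not $G^2/G^3$ as you write). Supplying the eigenvalue computation for $\nu+\pd_{\hbar^{-1}}$ closes the gap and recovers the paper's argument.
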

\begin{proof}
 The proof of \cite[Proposition \ref{DQvanish-quantprop}]{DQvanish} and \cite[Proposition \ref{DQnonneg-quantprop}]{DQnonneg}  generalises to this setting. For $(\omega,\lambda,\varpi,\pi) \in \Comp(A,B;0)$, we may apply \cite[Proposition \ref{poisson-obsDGLA}]{poisson} to give
 a commutative diagram  
\[
\begin{CD}
 (Q\Comp_w(A,M;0)/G^{j+1})_{(\omega,\lambda,\varpi, \pi)} @>>>(G\Iso(A,B;0)/G^{j+1})_{(\omega,\lambda)}\\
@VVV @VVV \\
(Q\Comp_w(A,M;0)/G^j)_{(\omega,\lambda,\varpi,\pi)} @>>> (G\Iso(A,B;0)/G^{j})_{(\omega, \lambda)}\\
@VVV @VVV \\
\mmc(N(\omega,\lambda,\varpi,\pi,j)[1]) @>>>\mmc(\cone(F^{2-j}\DR(A/R)\to F^{2-j}\DR(B/R)) \hbar^{j})
\end{CD}
\]
of  fibre sequences, for a space $N(\omega,\lambda,\varpi,\pi,j)$ defined as follows.

We set  $N(\omega,\lambda,\varpi,\pi,j)$ to be the homotopy limit of the diagram 
\[
 \begin{CD}
 \cocone( F^{2-j}\DR(A/R)\to F^{2-j}\DR(B/R))\hbar^{j} \\ 
@VV{\mu(-,-,\varpi,\pi)}V \\
F^{2-j}T_{(\varpi,\pi)}\widehat{\Pol}(A,B;0)\hbar^{j} \\
@AA{\nu(\omega,\lambda,\varpi, \pi)+ \pd_{\hbar^{-1}}}A \\
(F^{2-j}\widehat{\Pol}(A,B;0)\hbar^{j},\delta_{\varpi,\pi})= F^{2-j}T_{(\varpi,\pi)}\widehat{\Pol}(A,B;0)\hbar^{j-1},
 \end{CD}
\]
where $\mu$ is given by Definition \ref{mudef} (taking $j=1$) 
$\nu(\omega,\lambda,\varpi, \pi)$ is the tangent map of $\mu((\omega,\lambda),(-, -))$ at $(\varpi,\pi)$, given by 
\[
 \mu((\omega,\lambda), (\varpi+\tau \eps, \pi+ \rho \eps)) = \mu((\omega,\lambda), (\varpi, \pi)) + \nu(\omega,\lambda,\varpi, \pi)(\tau, \rho)\eps
\]
 with $\eps^2=0$.

On the associated graded pieces, the map 
$
\gr_F^p\nu(\omega,\lambda,\varpi, \pi)
$
on $\gr_F^p\widehat{\Pol}(A,B;0)\hbar^{j}$ comes from taking cocones of the rows in the  commutative diagram
\[
 \begin{CD}
  \hat{\HHom}_A(\Omega^p_{A},A)^{[-p]}\hbar^{p+j-1} @>>> \hat{\HHom}_B(\CoS^p \cone(\Omega^1_A\ten_AB \to \Omega^1_{B}),B))\hbar^{p+j-1}\\
  @V{\gr_F^p\nu(\omega, \varpi)}VV @VV{\gr_F^p\nu(\lambda, \pi)}V \\
  \hat{\HHom}_A(\Omega^p_{A},A)^{[-p]}\hbar^{p+j} @>>> \hat{\HHom}_B(\CoS^p \cone(\Omega^1_A\ten_AB \to \Omega^1_{B}),B))\hbar^{p+j}.
     \end{CD}
\]

As in the proof of Proposition \ref{compatcor2},  the maps $\gr_F^p\nu(\omega, \varpi)$ and $\gr_F^p\nu(\lambda, \pi)$ are compatibly homotopic to multiplication by $p\hbar$, since we are in the non-degenerate setting.
We thus have
\[
 \gr_F^p(\nu(\omega,\lambda,\varpi, \pi) + \pd_{\hbar^{-1}})\simeq p\hbar -(p+j-1)\hbar= (1-j)\hbar,
\]
and as this is an isomorphism for all $j \ge 2$, 
 the map  $N(\omega, \lambda,\varpi,\pi,j) \to \cocone(F^{2-j}\DR(A/R)\to F^{2-j}\DR(B/R)) \hbar^{j}$ is a quasi-isomorphism,  which inductively gives the required weak equivalences from the fibre sequences above.
\end{proof}

\begin{remarks}\label{quantrmk}
Taking the limit over all $j$, Proposition \ref{quantprop}  gives an equivalence
\begin{align*}
 &Q\cP_w(A,M;0)^{\nondeg}\\
& \simeq (Q\cP_w(A,M;0)^{\nondeg}/G^2)\by \prod_{i \ge 2} \mmc(\cone(\DR(A/R) \to \DR(B/R))\hbar^i);
\end{align*}
in particular, this means that there is a canonical map 
\[
 (Q\cP(A,M;0)^{\nondeg}/G^2) \to Q\cP(A,M;0)^{\nondeg},
\]
dependent on $w\in \Levi_{\GT}$, corresponding to the distinguished point $0$.

Even if $\pi$ is degenerate, a variant of Proposition \ref{quantprop} still holds. Because $\varpi^{\flat} \circ \omega^{\sharp}$ and $\pi^{\flat} \circ \lambda^{\sharp} $ are homotopy idempotent, the map   $\gr_F^p\nu(\omega,\lambda, \varpi, \pi)$ has eigenvalues in the interval $[0,p]$, so we just replace  $(1-j)$ with an operator  having eigenvalues in the interval $[1-p-j, 1-j]$.
 Since this is still a quasi-isomorphism for $j>1$, we have
\begin{align*}
 &Q\Comp_w(A,M;0) \\
&\simeq (Q\Comp_w(A,M;0)/G^2)\by \prod_{i \ge 2} \mmc(\cocone(\DR(A/R)\to \DR(B/R))\hbar^i).
\end{align*}
giving a sufficient first-order criterion  for degenerate quantisations to exist.
\end{remarks}

\section{Global quantisations}\label{stacksn}  

As in \cite[\S \ref{DQvanish-Artinsn}]{DQvanish} and  \cite[\S \ref{DQnonneg-stacksn}]{DQnonneg}, in order to pass from stacky CDGAs to derived Artin stacks, we will exploit a form of \'etale functoriality. We then introduce the notion of self-duality and thus establish the existence of quantisations for derived Lagrangians. 

\subsection{Diagrams of quantised  pairs}\label{Artindiagsn}

\begin{definition} 
Given a small category $I$,  an $I$-diagram $(A,F)$ in almost commutative stacky DGAAs over $R$, and a filtered $A$-bimodule $M$ in $I$-diagrams of chain cochain complexes for which  the left and right $\gr^FA$-module structures on $\gr^FM$ agree, we define the filtered chain cochain complex
\[
 \C\C_{R, BD_1}(A,M)
\]
to be the equaliser of the obvious diagram
\[
\prod_{i\in I} \C\C^{\bt}_{R,BD_1}(A(i),M(i)) \implies \prod_{f\co i \to j \text{ in } I} \C\C^{\bt}_{R,BD_1}(A(i),M(j)),
\]
for the $BD_1$ Hochschild complexes of Definition \ref{HHdefa}.

We then write $\C\C^{\bt}_{R,BD_1}(A):= \C\C^{\bt}_{R,BD_1}(A,A)$,   which  inherits the structure of  a stacky  brace algebra from  each $\C\C^{\bt}_{R,BD_1}(A(i),A(i))$. 
\end{definition}

Note that if $u \co I \to J$ is a morphism of small categories and $A$ is a  $J$-diagram of almost commutative stacky DGAAs over $R$, with $B= A \circ u$, then we have a natural map $ \C\C^{\bt}_R(A) \to \C\C^{\bt}_R(B)$.

In order to ensure that $\C\C^{\bt}_R(A,M)$ has the correct homological properties, we now consider categories  of  the form $[m]= (0 \to 1 \to \ldots \to m)$. Similarly to \cite[Lemma \ref{DQnonneg-calcCClemma}]{DQnonneg}, the construction $\C\C^{\bt}_R(A,M)$ preserves weak equivalences provided we restrict to pairs $(A,M)$ for which  each $A(i)$ is cofibrant as an $R$-module and $M$ is fibrant for the injective model structure (i.e. the maps $M(i) \to M(i+1)$ are all surjective).

As in \cite[\S \ref{DQvanish-Artindiagramsn}]{DQvanish}, we can do much the same for differential operators:
\begin{definition} 
 Given a small category $I$,  an $I$-diagram $B$ of stacky CDGAs over $R$, and   $B$-modules $M,N$ in chain  cochain complexes,  define the 
filtered chain cochain complex $\cDiff_{B/R}(M,N)$
to be the equaliser of the obvious diagram
\[
 \prod_{i \in I} \cDiff_{B(i)/R}(M(i),N(i)) \implies \prod_{f\co i \to j \text{ in } I}   \cDiff_{B(i)/R}(M(i),f_*N(j)),
\]
and write $\cDiff_{B/R}$ for $\cDiff_{B/R}(B,B)$
\end{definition}

If $B$ is an  $[m]$-diagram in  $DG^+dg\CAlg(R)$  which is cofibrant and  fibrant for the injective model structure (i.e. each $B(i)$ is cofibrant in the model structure of Lemma \ref{bicdgamodel} and the maps $B(i) \to B(i+1)$ are surjective), then observe that $\gr^F_k\cDiff_{B/R}$
is a model for the derived $\Hom$-complex $\oR \cHom_B(\CoS^k_B\Omega^k_{B/R},B)$.

The constructions in \S \ref{affinesn} now all carry over verbatim, generalising from morphisms of cofibrant stacky CDGAs to morphisms $A \to B$ of  $[m]$-diagrams of stacky CDGAs which are cofibrant and  fibrant for the injective model structure. In particular, for any such morphism and a strict line bundle $M$ over $B$, we have a DGLA
\[
 Q\widehat{\Pol}(A,M;0)^{[1]}
\]
 of $0$-shifted relative quantised polyvectors  as in Definition \ref{QPoldef}, and a space
\[
 Q\cP(A,M;0)
\]
 of quantisations of the pair $(A,M)$ as in Definition \ref{QPdef}.

In order to identify $Q\cP/G^1$ with  $\cP$, and for  notions such as non-degeneracy to make sense, we have to  assume that for our fibrant cofibrant  $[m]$-diagrams $A,B$ of stacky CDGAs, each $A(j),B(j)$ satisfies Assumption \ref{biCDGAprops}, so there exists $N$ for which the chain complexes $(\Omega^1_{A(j)/R}\ten_{A(j)}A(j)^0)^i $ are acyclic for all $i >N$, and similarly for $B$.

\begin{definition}\label{ICompdef}
Given a morphism $A \to B$ of fibrant cofibrant  $[m]$-diagrams  in stacky CDGAs (for the injective model structure)
define  
\[
 G\Iso(A,B;0):=  G\Iso(A(0),B(0);0)= \Lim_{i\in [m]}  G\Iso(A(i),B(i);0),
\]
for the space $G\Iso$ of generalised isotropic structures of Definition \ref{GPreSpdef}, and define the space $G\Lag(A,B;0)$ of generalised Lagrangians similarly. 

 Given a choice $w \in \Levi_{\GT}(\Q)$ of Levi decomposition for $\GT$, define 
\[
 \mu_w \co G\Iso(A,B;0) \by Q\cP_w(A,M;0) \to TQ\cP_w(A,M;0) 
\]
by setting $\mu_w(\omega,\lambda, \Delta)(i):= \mu_w(\omega(i),\lambda(i), \Delta(i)) \in TQ\cP_w(A(i),B(i);0)$ for $i \in [m]$, and let $ Q\Comp_w(A,M;0)$ be the homotopy vanishing locus of
\[
(\mu_w - \sigma) \co  G\Iso(A,B;0) \by Q\cP_w(A,M;0) \to  TQ\cP_w(A,M;0).
\]
over $Q\cP_w(A,M;0)$.
\end{definition}

As in \cite[\S \ref{poisson-bidescentsn}]{poisson}, if we let $(DG^+dg\CAlg(R)^{[1]})^{\et} \subset DG^+dg\CAlg(R)^{[1]}$ be the wide subcategory of the arrow category with only homotopy formally \'etale morphisms (see Definition \ref{hfetdef}) between arrows, then
for any of the constructions $F$ based on $Q\cP$ (i.e. $Q\cP$, $Q\Comp_w$ and their cotruncations $Q\cP/G^k$, $Q\Comp_w/G^k$), 

\cite[Definition \ref{poisson-inftyFdef}]{poisson} adapts  to give
an $\infty$-functor
\[
 \oR F \co \oL (DG^+dg\CAlg(R)^{[1]})^{\et} \to  \oL s\Set
\]
from the $\infty$-category of stacky CDGAs (simplicially localised at weak equivalences) and homotopy formally \'etale morphisms to the $\infty$-category of simplicial sets.
This construction has the property that 
\[
(\oR F)(\phi \co A\to B) \simeq F(\phi\co A\to B)
\]
for all morphisms $\phi$ of cofibrant stacky CDGAs $A$ over $R$.

Immediate consequences of Propositions \ref{QcompatP1} and \ref{quantprop} are that for any $w \in \Levi_{\GT}(\Q)$, the canonical maps 
 \begin{align*}
& Q\Comp_w(A,M;0)^{\nondeg} \to  Q\cP_w(A,M;0)^{\nondeg}\simeq  Q\cP(A,M;0)^{\nondeg};\\      
& Q\cP_w(A,M;0)^{\nondeg}/G^j\\
 &\to 
(Q\cP_w(A,M;0)^{\nondeg}/G^2)\by \prod_{2 \le i<j } \mmc(\cocone(\DR(A/R) \to \DR(B/R))\hbar^i[1])
\end{align*}
 are weak equivalences of $\infty$-functors on the full subcategory of $(\oL DG^+dg\CAlg(R)^{[1]})^{\et}$ consisting of objects satisfying the conditions of Assumption \ref{biCDGAprops}, for all $j \ge 2$.

\subsection{Descent and line bundles}\label{lbsn}

In order to translate our constructions from stacky CDGAs to derived Artin stacks, we now follow the approach set out in \cite[\S \ref{poisson-bidescentsn}]{poisson}, adapted to include line bundles as in \cite[\S \ref{DQvanish-lbsn}]{DQvanish}.

The denormalisation functor $D \co DG^+dg_+\CAlg(R) \to dg_+\CAlg(R)^{\Delta}$ from stacky CDGAs to cosimplicial CDGAs (cf. \cite[Definition \ref{ddt1-nabla}]{ddt1} allows us to extend simplicial functors $F$ on CDGAs to simplicial functors on stacky CDGAs, given by $B \mapsto \ho \Lim_{i\in \Delta} F(D^iB)$.

\begin{definition}
Given a derived Artin $N$-stack $X$, and $A \in DG^+dg\CAlg(R)$, we say that an element  $f \in \ho \Lim_i X(D^iA)$ is homotopy formally \'etale if the induced morphism 
\[
N_cf_0^*\bL^{X/R} \to \{ \Tot \sigma^{\le q} \oL\Omega^1_{A/R}\ten^{\oL}_AA^0\}_q
\]
 from \cite[\S \ref{poisson-Artintgtsn}]{poisson}  is a pro-quasi-isomorphism. 
\end{definition}
In this situation, it makes sense to think of $A$ as a derived Lie algebroid locally isomorphic to $X$.

This allows us to exploit \'etale functoriality of our constructions on stacky CDGAs, allowing them to descend to derived Artin stacks as follows.
3
\begin{definition}
Given a morphism $X \to Y$ of derived Artin $N$-stacks, we  write
 $(dg_+DG\Aff_{\et}^{[1]}\da X/Y)$ for the arrow $\infty$-category in which objects are pairs $(f,z)$, for  morphisms $f \co \Spec B \to \Spec A$ in the simplicial localisation of  $DG^+dg\CAlg(R)^{\op}$ at levelwise quasi-isomorphisms, and
 homotopy formally \'etale elements $z \in \ho \Lim_i X(D^iB)\by^h_{Y(D^iB)}Y(D^iA)$; morphisms in this  $\infty$-category are given by 
spaces of compatible homotopy formally \'etale maps $A \to A'$, $B \to B'$ .
\end{definition}

We now extend the constructions above to line bundles, via $\bG_m$-equivariance exactly as in  \cite[\S \ref{DQvanish-lbsn}]{DQvanish}. When working with CDGAs with no stacky structure, this can be done just by observing that there is a natural $\bG_m$-action on $Q\cP$ given by conjugation, since the derived stack associated to $B\bG_m$ is just the hypersheafification of  of the nerve of the functor $B \mapsto \bG_m(B_0)$. 

However, for stacky CDGAs, we must replace the group $\bG_m(B_0)$ with the groupoid
\[
  \mathrm{TLB}(B):= [\z^1(\z_0B)/(\z_0B^0)^{\by}]
\]
of trivial line bundles, where $f \in (B^0)^{\by}$ acts on $\z^1B$ by addition of $\pd \log f = f^{-1}\pd f$. Here, an element 
$c \in \z^1(\z_0B)$ corresponds to the strict line bundle $B_c=(B^{\#}, \pd +c)$, with invertible elements $f \in (\z_0B^0)$ giving isomorphisms $f\co B_{c+\pd \log f}\to B_c $. 
The reason this works is that the nerve of $\mathrm{TLB}$ is essentially the smallest functor which hypersheafifies to recover $B \mapsto \ho \Lim_{i\in \Delta} B\bG_m(D^iB)$.

%

For any morphism $A \to B$ of cofibrant stacky CDGAs over $R$, we can then extend $ Q\cP(A,B;0)$   to a simplicial representation of the 
groupoid $\mathrm{TLB}(B)$ above by sending an object 
$c \in \z^1(\z_0B)$ to $Q\cP(A,B_c;0)$, with $(\z_0B^0)^{\by}$ acting via functoriality for strict line bundles. Note that the quotient representation $Q\cP(-,-;0)/G^1= \cP(-,0)$ is trivial;  we also set $G\Iso$ to be a trivial representation $c \mapsto G\Iso(A,B;0)$.

\begin{definition}\label{quotientbyGmdef}
For any of the constructions $F$ of \S \ref{Artindiagsn}, let $\oR (F/^h\bG_m)$ be the $\infty$-functor on $\oL dg\CAlg(R)^{\et}$  given by applying the construction of  \cite[\S \ref{poisson-bidescentsn}]{poisson} to the right-derived functor of the Grothendieck construction
\[
  B \mapsto 
\holim_{\substack{ \lra \\ c \in \mathrm{TLB}(B)}} F(A,B_c),
\]
 then taking  hypersheafification with respect to homotopy formally  \'etale coverings. 
\end{definition}

\begin{definition}\label{QPdefglobal}
 Given a map $f \co X \to Y$ of  strongly quasi-compact derived Artin $N$-stacks over $R$,  a line bundle $\sL$ on $X$ and any of the functors $F$ above, define
$
  F(Y,\sL)
$
 to be the homotopy limit of 
\[
\oR(F/^h\bG_m)(A,B)\by_{\oR (*/^h\bG_m)(B)}^h\{\sL|_{B}\}
\]
 over objects $\Spec B \to \Spec A$ in  the $\infty$-category $(dg_+DG\Aff_{\et}^{[1]}\da X/Y)$.
\end{definition}

\begin{remark}\label{smallerresnrmk}
In many cases, we can take smaller categories than $(dg_+DG\Aff_{\et}^{[1]}\da X/Y)$ on which to calculate the homotopy limit. When the $\bG_m$-action on $F$ is trivial, we can restrict to  compatible hypergroupoid resolutions of $X$ and $Y$ as in \cite[\S \ref{poisson-bidescentsn}]{poisson}, and in general we just need the resolution of $X$ to be compatible with the canonical resolution of $B\bG_m$. When $X$ and $Y$ are derived Deligne--Mumford $N$-stacks, we do not need stacky CDGAs at all, and can just work over $(DG\Aff_{\et}^{[1]}\da X/Y)$. 

When $X$ and $Y$ are $1$-geometric derived Artin stacks, we may just consider the $\infty$-category of commutative diagrams
\[
 \begin{CD}
  U @>f>> X \\
@VVV @VVV \\
V @>g>> Y
 \end{CD}
\]
with $U,V$ derived affines and the maps $f,g$ being smooth; to this we associate the morphism $\Omega^{\bt}_{U/X} \to \Omega^{\bt}_{V/Y}$ of stacky CDGAs as in \S \ref{stackyCDGAsn}, giving an object of $(dg_+DG\Aff_{\et}^{[1]}\da X/Y)$.
\end{remark}

\begin{remark}\label{algdrmk}
 Following Remark \ref{curvedrmk}, we may regard an element of $Q\cP(Y,\sL;0)$  as a sheaf on  $(DG\Aff_{\et}^{[1]}\da X/Y)$ deforming the pair $(\sO_Y,\sL)$, by combining a suitable curved  $A_{\infty}$ deformation $\tilde{\sO}_Y$ of $\sO_Y$ over $R\brh$ with an $f^{-1}\tilde{\sO}_Y$-module $\tilde{\sL}$ deforming $\sL$ over $R\brh$, the deformation being given by  $R$-linear differential operators with restrictions on their orders. 
 
 In fact, there is an $f^{-1}\tilde{\sO}_Y-\sD_X\brh$-bimodule 
 $
\sE_{\hbar}:=(\sL\ten_{\sO_{X}}\sD_{X}\brh, \delta + \Delta_{\tilde{\sL}} \cdot-),
 $
from which we can 
recover $\tilde{\sL}$ as $\sE_{\hbar}\ten_{\sD_{X}\brh}\sO_{X}\brh$.
 In particular, this gives us a functor from right  $\tilde{\sO}_Y$-modules $\sN$ to right $\sD_X\brh$-modules $f^{-1}\sN\ten_{f^{-1}\tilde{\sO}_Y}^{\oL}\sE_{\hbar}$. 
 \end{remark}

\begin{examples}\label{algdex}
Here are some cases where the description simplifies:
\begin{enumerate}
 
\item\label{algdexsmooth}
The simplest case to consider is when $X$ and $Y$ are both smooth (underived) Deligne--Mumford $N$-stacks, so we can work with algebras instead of stacky CDGAs. Then the description of Example \ref{curvedex}.(\ref{curvedexsmooth}) implies that $\tilde{\sO}_Y$ is locally given by an associative deformation of the sheaf $\sO_{Y}$ on the \'etale site of $Y$, but the presence of $2$-automorphisms makes $\tilde{\sO}_Y$ an algebroid deformation, i.e. an  $R\brh$-deformation of $\sO_Y$ regarded as a $2$-sheaf of $R$-linear categories.

Then $\tilde{\sL}$ gives rise to  an $R\brh$-linear functor from the algebroid $f^{-1}\tilde{\sO}_Y$  on the \'etale site of $X$ to the $R\brh$-linear category of  right $\sD_X\brh$-modules on $X$, together with conditions on orders of differential operators which are difficult to characterise directly. However, when $f$ is a closed immersion and  $\tilde{\sO}_Y$ is non-degenerate, Example \ref{curvedex}.(\ref{curvedexsmooth2}) implies that   $\tilde{\sL}$ is just a  $R\brh$-linear functor from the algebroid $f^{-1}\tilde{\sO}_Y$ to the category of complete flat $R\brh$-modules on $X_{\et}$, reducing to the constant functor $f^{-1}\sO_Y \mapsto \sL$ modulo $\hbar$, with no further conditions necessary.

\medskip
\item\label{algdexDM}
Generalising to the case where $X$ and $Y$ are both derived Deligne--Mumford $N$-stacks, the description of Remark \ref{curvedex}.(\ref{curvedexDM}) similarly implies that $\tilde{\sO}_Y$ gives rise to an  associative $R\brh$-deformation $\sA$ of $\sO_{Y}$ as a hypersheaf of $R$-linear dg categories, but this throws away information about almost commutativity, so we cannot recover $\tilde{\sO}_Y$ from the algebroid. There is a similar loss of information associating right $\sD$-modules to   $\tilde{\sL}$. Thus each quantisation gives rise to  (but cannot be recovered from) an  $\infty$-algebroid $\sA$ on $Y$ equipped with  an  $R\brh$-linear $\infty$-functor from $f^{-1}\sA$ to the $R\brh$-linear $\infty$-category of   right $\sD_X\brh$-modules, deforming the constant functor $f^{-1}\sO_Y \mapsto \sL$. 


\end{enumerate}
\end{examples}

Adapting \cite[Definition \ref{DQvanish-nondegstack}]{DQvanish} along the lines of Definition \ref{Qnondegdef} gives:
\begin{definition}\label{nondegstack}
Say that a quantisation $\Delta \in Q\cP(Y,\sL;0)/G^k$  is non-degenerate if the induced maps from cotangent complexes to tangent complexes
\begin{align*}
 \Delta_{2,Y}^{\flat}\co  \bL^{Y/R} \to \oR\hom_{\sO_Y}(\bL^{Y/R}, \sO_{Y/R})\\
\Delta_{2,X}^{\flat}\co  \bL^{X/R} \to \oR\hom_{\sO_{X}}(\bL^{X/Y}, \sO_{X})[1]
\end{align*}
 are quasi-isomorphisms and  and $\bL^{X}, \bL^{Y}$ are perfect.
\end{definition}

Propositions \ref{compatcor2} and  \ref{quantprop} now readily generalise (substituting the relevant results from \cite[\S \ref{poisson-Artinsn}]{poisson} to pass from local to global),   showing that the only obstruction to quantising a non-degenerate co-isotropic structure is first-order:
\begin{proposition}\label{prop3}
For any morphism  $X \to Y$ of derived Artin $N$-stacks, any line bundle $\sL$ on $X$ and any $w \in \Levi_{\GT}(\Q)$, the  canonical maps
\begin{align*} 
  \Comp(Y,X;0)^{\nondeg} &\to  \cP(Y,X;0)^{\nondeg} \\ 
\Comp(Y,X;0)^{\nondeg} &\to \Lag(Y,X;0)\\        
Q\Comp_w(Y,\sL;0)^{\nondeg} &\to  Q\cP(Y,\sL;0)^{\nondeg}
\end{align*}
\begin{align*} 
 Q\Comp_w(Y,\sL;0) &\to (Q\Comp_w(Y,\sL;0)/G^2) \by^h_{(G\Iso(Y,X;0)/G^2)} G\Iso(Y,X;0)\simeq \\
&(Q\Comp_w(Y,\sL;0)/G^2)\by \prod_{i \ge 2} \mmc(\cone(\DR(Y/R)\to \DR(X/R) ) \hbar^i)
\end{align*}
are filtered weak equivalences.  

In particular, $w$ gives rise to a  morphism in the homotopy category of simplicial sets
\[
  Q\cP(Y,\sL;0)^{\nondeg} \to G\Lag(Y,X;0)
\]
from the space of quantised co-isotropic structures to the space of generalised Lagrangians, which induces a weak equivalence
\begin{align*}
 Q\cP(Y,\sL;0)^{\nondeg} &\to  (Q\cP(Y,\sL;0)^{\nondeg}/G^2) \by^h_{ G\Lag(Y,X;0)/G^2}G\Lag(Y,X;0)\simeq\\
&(Q\cP(Y,\sL;0)^{\nondeg}/G^2)\by \prod_{i \ge 2} \mmc(\cone(\DR(Y/R)\to \DR(X/R) ) \hbar^i).
\end{align*}
\end{proposition}

\begin{remark}\label{cfBGKP} 
The results of Proposition \ref{prop3} are compatible with those of \cite[Theorem 1.1.4]{BaranovskyGinzburgKaledinPecharich}, which fixes a sheaf $\tilde{\sO}_{Y}$ of associative algebras  quantising a symplectic structure on a smooth variety $Y$, and describes $\tilde{\sO}_{Y}$-module deformations of line bundles  $\sL$ on smooth closed Lagrangians $X\subset Y$. As in Example \ref{algdex}.(\ref{algdexsmooth}), this groupoid corresponds precisely to our space $ Q\cP(Y,\sL;0)^{\nondeg}\by^h_{Q\cP(Y,0)^{\nondeg}}\{\tilde{\sO}_Y\}$ in this specialised setting, although we consider more general quantisations  $\tilde{\sO}_{Y}$. 

In the generality of Proposition \ref{prop3}, the  first order deformation problem is a question of lifting $Q\cP(Y,\sL;0)^{\nondeg}/G^2\to Q\cP(Y,0)^{\nondeg}/G^2$ 
over a Lagrangian structure $\pi \in \Lag(Y,X;0)^{\nondeg}$,  so DGLA obstruction theory applied to the complexes of quantised polyvectors allows us to  read off the  obstruction space as   $  \H^3(\cocone(F^1T_{\pi}\widehat{\Pol}(Y,X;0) \to F^1T_{\pi}\widehat{\Pol}(Y;0)))$, which is isomorphic via the compatibility map $\mu(-,\pi)$ to $  \H^2F^1\DR(X)$. By Proposition \ref{prop3}, the  higher order  deformation problem is then simply a case of lifting an element $u \in \hbar^2\H^2\DR(Y/R)\brh$ (determined by $\tilde{\sO}_Y$) to $\hbar^2\H^1(\cone(\DR(Y)\to \DR(X))\brh$, giving the higher order obstruction as the image of $u$ in $\hbar^2\H^2\DR(X/R)\brh$.

In their restricted setting, \cite{BaranovskyGinzburgKaledinPecharich} indeed show that the potential first order obstruction to quantising $\sL$ over $\tilde{\sO}_Y$   is given by a class $c_1 (\sL) -\half  c_1 (K_X ) -\At(\tilde{\sO}_Y , X) \in \H^2F^1\DR(X)$, with higher order obstructions  a power series in $\hbar^2\H^2\DR(X)\brh$ depending only on $\tilde{\sO}_Y$. 

When $\sL^{\ten 2}$ has a right $\sD$-module structure, the Chern class $c_1 (\sL) -\half  c_1 (K_X )$ vanishes. Moreover, whenever there is an isomorphism $\tilde{\O}_Y\simeq \tilde{\O}_Y^{\op}$ of quantisations which is semilinear with respect to the transformation $\hbar \mapsto -\hbar$, the calculations of  \cite[Remark 5.3.4]{BaranovskyGinzburgKaledinPecharich} show that $\At(\tilde{\O}_Y,X)=0$. Thus their first order obstruction does indeed vanish in the scenario of 
Theorem \ref{quantpropsd} below, with the higher order obstruction given by Corollary \ref{quantcorsd}. 

\end{remark}

\subsection{Self-duality}\label{sdsn}

In order to eliminate the potential first order obstruction to quantising a generalised Lagrangian in Proposition \ref{prop3}, we now introduce the notion of self-duality, combining the ideas of \cite[\S \ref{DQvanish-sdsn}]{DQvanish} and \cite[\S \ref{DQnonneg-sdsn}]{DQnonneg}.

We  wish to consider line bundles $\sL$ on $X$ equipped with an anti-involutive equivalence $(-)^t \co \sD(\sL) \simeq \sD(\sL)^{\op}$. Such an equivalence is the same as a right $\sD$-module structure on $\sL^{\ten 2}$. Since a dualising line bundle  $K_{X}$ on $X$ naturally has the structure of a right $\sD$-module (see for instance \cite[\S 2.4]{GaitsgoryRozenblyumCrystal} for a proof in the derived setting), we will typically  take  $\sL$ to be a square root of $K_{X}$, when this exists. In this case, the equivalence $\sD(\sL) \simeq \sD(\sL)^{\op}$ comes from the equivalences $ \sL \simeq \sL^{\vee}$ and $\sD(\sE)^{\op} \simeq \sD(\sE^{\vee})$, where  $\sE^{\vee}:= \oR\hom_{\sO_X}(\sE,K_X)$.

\begin{definition}
Given a morphism $\phi \co A\to B$ of cofibrant stacky CDGAs  over $R$ and a strict line bundle $M$ over $B$, equipped with an anti-involution $(-)^t$ of $\cDiff_{B/R}(M)$,  we define an involution $(-)^*$ on  
the DGLA $Q\widehat{\Pol}(A,M;0)[1]$ by  
\[
 \Delta^*(\hbar):= i(\Delta)(-\hbar)^t, 
\]
for the brace algebra anti-involution  
\begin{align*}
 -i &\co (\C\C_{R,BD_1}(A, \cDiff_{B/R}(M))_{[1]} \rtimes \C\C_{R,BD_1}(A))^{\op}\\
&\to \C\C_{R,BD_1}(A, \cDiff_{B/R}(M)^{\op})_{[1]} \rtimes \C\C_{R,BD_1}(A)
\end{align*}
given by applying  Lemma \ref{involutiveHH} to the a.c. brace algebras $ \C\C_{R,BD_1}(A)$ and  $\C\C_{R, BD_1}(\C\C_{R,BD_1}(A, \cDiff_{B/R}(M)))$. 
\end{definition}

Since $(-)^*$ is a quasi-isomorphism of filtered DGLAs, it gives rise to an involutive weak equivalence 
\[
(-)^*\co Q\cP(A,M;0) \to Q\cP(A,M;0)
\]

\begin{definition}\label{selfdualdef}
For a line bundle $\sL$ on $X$ with a right $\sD$-module structure on $\sL^{\ten 2}$, 
we define
the space 
\[
 Q\cP(Y,\sL;0)^{sd}
\]
 of self-dual quantisations to be  the space of  homotopy fixed points of the  $\Z/2$-action  on $Q\cP(Y,\sL;0)$ generated by $(-)^*$.

 Similarly, we write $Q\widehat{\Pol}(A,M;0)^{sd}$ for the fixed points of $ Q\widehat{\Pol}(A,M;0)$ under the corresponding involution, so that 
 \[
 Q\cP(A,M;0)^{sd}\simeq \mmc( Q\widehat{\Pol}(A,M;0)^{sd}[1]).
 \]
 \end{definition}

\begin{lemma}\label{filtsd}
For the filtration $G$ induced on  $\tilde{F}^pQ\widehat{\Pol}(A,M;0)^{sd}$ by the corresponding filtration on $\tilde{F}^p Q\widehat{\Pol}(A,M;0)$, we have
\[
\gr_G^k  \tilde{F}^pQ\widehat{\Pol}(A,M;0)^{sd} \simeq \begin{cases}
                                                         \gr_G^k  \tilde{F}^pQ\widehat{\Pol}(A,M;0) & k \text{ even}\\
0 & k \text{ odd}.
                                                        \end{cases}
\]
\end{lemma}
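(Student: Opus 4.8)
\emph{Proof proposal.} The plan is to identify $Q\widehat{\Pol}(A,M;0)^{sd}$ with the $(+1)$-eigenspace of the involution $(-)^*$, and then to compute the involution that $(-)^*$ induces on each $G$-graded piece by passing further to the associated graded for $\gamma F$. Since $(-)^*$ is built from the brace-algebra involution $i$ of Lemma \ref{involutiveHH}, the contravariant involution $(-)^t$ of $\cDiff_{B/R}(M)$, and the substitution $\hbar\mapsto-\hbar$, it is a strict involution of the filtered DGLA $Q\widehat{\Pol}(A,M;0)[1]$ preserving $\tilde F$ and $G$. Working over $\Q$, it makes no difference whether one takes strict invariants or homotopy fixed points for the resulting $\Z/2$-action: the two agree via the averaging idempotent $\half(1+(-)^*)$. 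Hence $Q\widehat{\Pol}(A,M;0)^{sd}$ is a direct summand, its formation is exact, and it commutes with $\gr_{\tilde F}$, $\gr_G$ and $\gr_{\gamma F}$; in particular $\gr_G^k\tilde F^pQ\widehat{\Pol}(A,M;0)^{sd}$ is the $(+1)$-eigenspace of the involution induced by $(-)^*$ on $\gr_G^k\tilde F^pQ\widehat{\Pol}(A,M;0)$.

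First I would record the weight bookkeeping. By Definition \ref{QPoldef} we have $\tilde F^pQ\widehat{\Pol}(A,M;0)=\prod_{j\ge p}\hat{\Tot}(\gamma F)_j\C\,\hbar^{j-1}$, so in $\gr_G^k\tilde F^pQ\widehat{\Pol}(A,M;0)$ the summand of $\gamma$-weight $j$ lives in $\hbar$-degree $j-1+k$, and (when $A,B$ are cofibrant satisfying Assumption \ref{biCDGAprops}) Lemma \ref{gradedcalclemma} identifies $\gr_G^k\tilde F^pQ\widehat{\Pol}(A,M;0)$ with the cocone of $\prod_{j\ge p}\hat{\HHom}_A(\Omega^{j-k}_{A/R},A)\hbar^{j-1}[k-j]\to\prod_{j\ge p}\hat{\HHom}_B(\oL\CoS^{j-k}_B\bL_{B/A},B)\hbar^{j-1}$. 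On the $j$-th summand the substitution $\hbar\mapsto-\hbar$ contributes the scalar $(-1)^{j-1}$, while the remaining part of $(-)^*$ — the combination of $i$ and $(-)^t$ — acts, on the associated graded for $\gamma F$, by the scalar $(-1)^{j-k-1}$ on both the $\hat{\HHom}_A(\Omega^{j-k}_{A/R},-)$ term and the $\hat{\HHom}_B(\oL\CoS^{j-k}_B\bL_{B/A},-)$ term: for the former this is the last assertion of Lemma \ref{involutiveHH}, and for the latter it is the relative analogue, using that the transpose of an order-$m$ differential operator has principal symbol $(-1)^m$ times the original (cf.\ the sign computations in \cite[\S \ref{DQvanish-sdsn}]{DQvanish} and \cite[\S \ref{DQnonneg-sdsn}]{DQnonneg}). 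Multiplying, $(-)^*$ acts on $\gr^{\gamma F}\gr_G^k\tilde F^pQ\widehat{\Pol}(A,M;0)$ by $(-1)^{j-1}\cdot(-1)^{j-k-1}=(-1)^k$, independently of $j$.

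The conclusion is then immediate. For $k$ even, $(-)^*$ induces the identity on $\gr^{\gamma F}\gr_G^k\tilde F^pQ\widehat{\Pol}(A,M;0)$, so its $(+1)$-eigenspace is all of $\gr^{\gamma F}\gr_G^k\tilde F^pQ\widehat{\Pol}(A,M;0)$; since $\gamma F$ is complete and exhaustive by construction, the inclusion $\gr_G^k\tilde F^pQ\widehat{\Pol}(A,M;0)^{sd}\hookrightarrow\gr_G^k\tilde F^pQ\widehat{\Pol}(A,M;0)$ is therefore a filtered quasi-isomorphism. For $k$ odd, $(-)^*$ induces $-\id$ on $\gr^{\gamma F}$, so the $(+1)$-eigenspace has vanishing $\gamma F$-graded, and completeness of $\gamma F$ forces $\gr_G^k\tilde F^pQ\widehat{\Pol}(A,M;0)^{sd}$ to be acyclic, i.e.\ $\simeq 0$. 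Throughout, the filtration $\tilde F^p$ is respected because $(-)^*$ preserves it.

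The main obstacle is the sign computation of the second paragraph: verifying that, after passing to $\gr^{\gamma F}$ (which turns $\C$ into a Poisson-type complex assembled from the polyvectors of $A$ and the symmetric powers of $\bL_{B/A}$), the composite of $i$ and $(-)^t$ really acts by $(-1)^{j-k-1}$ on the $\gamma$-weight-$j$ part. This requires careful tracking of the shift $[1]$ and the semidirect-product structure in $\C=\C\C_{R,BD_1}(A,\cDiff_{B/R}(M))_{[1]}\rtimes\C\C_{R,BD_1}(A)$, matching the $i$-contribution on the $\C\C_{R,BD_1}(A)$-factor (governed by Lemma \ref{involutiveHH}) with the symbol computation for $(-)^t$ on the differential-operator factor; this is the relative version of the corresponding verifications in \cite{DQvanish} and \cite{DQnonneg}, whose arguments I would transcribe.
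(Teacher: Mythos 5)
Your proposal is correct and follows essentially the same route as the paper: identify the self-dual object with the (rational) fixed points of the involution $(-)^*$, use Lemma \ref{involutiveHH} together with the transpose sign on differential operators (as in the cited lemmas of \cite{DQvanish,DQnonneg}) to see that the sign $(-1)^{j-k-1}$ from $i$ and $(-)^t$ cancels against the sign from $\hbar\mapsto-\hbar$ up to an overall $(-1)^k$ on $\gr_G^k$, and conclude that the even graded pieces survive and the odd ones vanish. The paper phrases the same computation slightly more compactly (trivial action on $\gr_G^0$, hence $(-1)^k$ on $\gr_G^k=\hbar^k\gr_G^0$), but the argument is the same.
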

\begin{proof}
This combines \cite[Lemma \ref{DQvanish-filtsd}]{DQvanish} and \cite[Lemma \ref{DQnonneg-filtsd}]{DQnonneg}. It follows because Lemma \ref{involutiveHH} ensures that the involution  acts trivially on $\gr_G^0Q\widehat{\Pol}(A,M;0)$. It therefore acts as multiplication by $(-1)^k$ on   $ \gr_G^kQ\widehat{\Pol}(A,M;0)= \hbar^k\gr_G^0Q\widehat{\Pol}(A,M;0)$.
\end{proof}

\begin{remark}\label{curvedsdrmk}
Following Remark \ref{algdrmk}, 
 a self-dual quantisation of $(X \xra{\phi}Y,\sL)$ gives rise to a 
curved $A_{\infty}$-deformation $\tilde{\O}_{Y}$ of $\hat{\Tot}\sO_{Y}$ over $R\llbracket \hbar \rrbracket$, equipped with an anti-involution $*$ which is semilinear under the transformation $\hbar \mapsto -\hbar$, together with a curved anti-involutive $A_{\infty}$-morphism $\phi^{-1}\tilde{\O}_{Y} \to \sD_{\sO_{X}/R}(\sL)\llbracket \hbar \rrbracket$.

More is true: by \cite[Proposition \ref{DQnonneg-Perprop}]{DQnonneg}, a quantisation gives a curved $A_{\infty}$ deformation of the  dg category $\per_{\dg}(\sO_{Y})$ of perfect complexes on $Y$, with self-dual quantisations incorporating a semilinear lift of the involution $\oR \hom_{\sO_{Y}}(-, \sO_{Y})$. A self-dual quantisation of the pair $(Y,\sL)$ thus gives a curved semilinearly involutive $A_{\infty}$-deformation of the involutive category  $\per_{\dg}(\sO_{Y})$ fibred over $\per_{\dg}(\sO_{X})$ via the functor
\begin{align*}
 (\per_{\dg}(\sO_{Y}),\oR \hom_{\sO_{Y}}(-, \sO_{Y})) &\to (\per_{\dg}(\sO_{X}),\oR \hom_{\sO_{X}}(-, \sL^{\ten 2}))\\
\sF &\mapsto \phi^*\sF \ten \sL,
\end{align*}
with an additional restriction of the  curvature of the deformation in terms of differential operators.

Adapting \cite[Remark \ref{DQnonneg-sdgerbermk}]{DQnonneg}, we can extend  the input data from the space $\oR\Gamma(X, B\bG_m)$ of line bundles to the space  $\oR\Gamma(Y, B^2\bG_m)\by^h_{\oR\Gamma(X, B^2\bG_m)} \{1\}$ of pairs $(\sG,\sL)$ with $\sG$  a $\bG_m$-gerbe on $Y$, and $\sL$  a trivialisation of $\phi^*\sG$. There is then a notion of self-dual quantisation for pairs $(\sG,\sL)$ with $\sG$ a $\mu_2$-gerbe and $\sL$ a trivialisation of the $\bG_m$-gerbe associated to $\phi^*\sG$, with a right $\sD$-module structure on the line bundle $\sL^{\ten 2}$. In particular, we may consider involutive quantisations of $(\per_{\dg}(\sO_{Y}),\oR \hom_{\sO_{Y}}(-, \sM))$ for any line bundle $\sM$, the criterion for self-duality now being that $\sL^{\ten 2} \ten \phi^*\sM$ be a right $\sD$-module, so that we consider the involution $\oR \hom_{\sO_{X}}(-,  \sL^{\ten 2}\ten \phi^*\sM)$ on $\per_{\dg}(\sO_{X})$. 

The natural example to take for $\sM$ is the dualising line bundle $K_{Y}= \det \bL^{Y}$ when $Y$ is virtually LCI, but when $X$ is Lagrangian, $\phi^*K_{Y}$ will  be trivial, so the resulting quantisations are quite similar. In any case, the $\bG_m$-actions on our filtered DGLAs are all unipotent, so extend to $\bG_m\ten_{\Z}\Q$-actions. Since $\mu_2\ten\Q=0$, this means  there are canonical equivalences between the  spaces of self-dual quantisations for varying $(\sG,\sL)$.
\end{remark}

\begin{definition}
As in \cite[Remark \ref{DQnonneg-oddcoeffsrmk}]{DQnonneg}, write $t \in \GT(\Q)$ for the element which induces the anti-involution of Lemma \ref{involutiveHH}. We then denote by $\Levi_{\GT}^t$ the space of 
Levi decompositions $w$ of $\GT$ with $w(-1)=t$; these  form a torsor for the subgroup $(\GT^1)^t$ of $t$-invariants in the pro-unipotent radical $\GT^1$, and correspond to even Drinfeld associators.
 \end{definition}

\begin{definition}
Define  $G\Lag(Y,X;0)^{sd}$ to be the homotopy fixed points of the involution of $G\Lag(Y,X;0)$ given by $\hbar \mapsto -\hbar$. Explicitly, we set $G\Iso(A,B;0)^{sd}$ to be 
\[
 \mmc( \cone(F^2\DR(A/R)\to F^2\DR(B/R))) \by \prod_{i>0}  \mmc( \cone(\DR(A/R)\to \DR(B/R))\hbar^{2i}),  
\]
with $G\Lag(A,B;0)^{sd}$ the subspace of non-degenerate elements.
\end{definition}

\begin{theorem}\label{quantpropsd}
Take  a morphism $X \to Y$ of strongly quasi-compact Artin $N$-stacks over $R$, and  a line bundle $\sL$ on $X$ with a right $\sD$-module structure on $\sL^{\ten 2}$ (such as when $\sL$ is a square root of $K_{X}$). For any even associator $w \in \Levi_{\GT}^t(\Q)$, the induced map 
\[
  Q\cP(Y,\sL;0)^{\nondeg,sd} \to G\Lag(Y,X;0)^{sd}
\]
(from non-degenerate self-dual quantisations to generalised self-dual Lagrangians)
coming from Proposition \ref{prop3} is a weak equivalence.

In particular, $w$ associates a canonical choice of self-dual quantisation of  $(Y,\sL)$ to   every Lagrangian structure of $X$ over $Y$.
\end{theorem}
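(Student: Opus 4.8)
The plan is to reduce Theorem \ref{quantpropsd} to the non-self-dual comparison of Proposition \ref{prop3} by taking $\Z/2$-homotopy fixed points on both sides, following the pattern of \cite[\S \ref{DQvanish-sdsn}]{DQvanish} and \cite[\S \ref{DQnonneg-sdsn}]{DQnonneg}. First I would observe that the map $Q\cP(Y,\sL;0)^{\nondeg} \to G\Lag(Y,X;0)$ of Proposition \ref{prop3}, being built from $\mu_w$, is $\Z/2$-equivariant for the involution $(-)^*$ on the source and the involution $\hbar \mapsto -\hbar$ on the target, provided $w \in \Levi_{\GT}^t(\Q)$: this is exactly the condition $w(-1) = t$, which ensures that the formality functor $p_w$ intertwines the brace-algebra involution $i$ of Lemma \ref{involutiveHH} with the semilinear structure on $T_\Delta Q\widehat{\Pol}_w$. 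Granting equivariance, passing to homotopy fixed points gives a map $Q\cP(Y,\sL;0)^{\nondeg,sd} \to G\Lag(Y,X;0)^{sd}$, and since homotopy fixed points preserve weak equivalences, it remains only to check that the fixed-point spaces are the ones named in the statement and that the first-order obstruction genuinely disappears.

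The heart of the argument is the obstruction calculation. Using the equivalence of Proposition \ref{prop3},
\[
 Q\cP_w(Y,\sL;0)^{\nondeg} \simeq (Q\cP_w(Y,\sL;0)^{\nondeg}/G^2) \by \prod_{i \ge 2} \mmc(\cone(\DR(Y/R)\to \DR(X/R))\hbar^i),
\]
I would take $\Z/2$-homotopy fixed points of each factor separately. On the infinite product, the involution acts on the $\hbar^i$-factor by $(-1)^i$ up to the equivalence, so homotopy fixed points kill the odd factors and leave $\prod_{i>0} \mmc(\cone(\DR(Y/R)\to\DR(X/R))\hbar^{2i})$; this uses Lemma \ref{filtsd} to match the $G$-graded pieces. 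The remaining factor $Q\cP_w(Y,\sL;0)^{\nondeg}/G^2$ carries an involution whose fixed points, by Lemma \ref{filtsd} again together with obstruction calculus over the filtration $F$ on $\widehat{\Pol}(Y,X;0)$, reduce to $\cP(Y,X;0)^{\nondeg} \simeq \Lag(Y,X;0)$ — i.e. the potential first-order obstruction class (the $\H^1(F^2\DR)$-worth of data separating $Q\cP/G^2$ from $\cP/G^1$, identified in Remark \ref{cfBGKP} with $c_1(\sL) - \half c_1(K_X) - \At(\tilde\O_Y,X)$) lies in an odd-weight summand and is therefore annihilated. This is where the hypothesis that $\sL^{\ten 2}$ is a $\sD$-module enters essentially: it provides the contravariant involution $(-)^t$ on $\cDiff_{B/R}(M)$ needed to define $(-)^*$ in the first place, and Remark \ref{cfBGKP} records that the Atiyah-class obstruction then vanishes. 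Assembling, $Q\cP(Y,\sL;0)^{\nondeg,sd} \simeq \Lag(Y,X;0) \by \prod_{i>0}\mmc(\cone(\DR(Y/R)\to\DR(X/R))\hbar^{2i}) \simeq G\Lag(Y,X;0)^{sd}$.

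For the final sentence I would note that $\Lag(Y,X;0)$, the space of $0$-shifted Lagrangian structures on $X$ over $Y$, sits inside $G\Lag(Y,X;0)^{sd}$ as the $\hbar = 0$ locus (all higher coefficients set to zero), so the chosen $w$ together with the equivalence above sends a Lagrangian structure to a canonical point of $Q\cP(Y,\sL;0)^{\nondeg,sd}$, which by Remark \ref{curvedsdrmk} is a self-dual quantisation; in particular such quantisations always exist.

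The main obstacle I expect is verifying the $\Z/2$-equivariance of the comparison map with honest control of homotopy coherences — that is, showing that the zigzag defining $\mu_w$ (through Lemmas \ref{swisslemma}, \ref{barcobarprop2}, \ref{mulemma2} and the functor $p_w$) can be chosen strictly compatibly with the involutions $i$ and $\hbar \mapsto -\hbar$ when $w(-1) = t$, rather than merely up to homotopy. In \cite{DQvanish, DQnonneg} this is handled by working throughout with the explicit brace-level involution of Lemma \ref{involutiveHH} and checking it is preserved by each construction; the same bookkeeping should go through here because $(-)^t$ on $\cDiff_{B/R}(M)$ is compatible with the $\rtimes$-construction of Definition \ref{semidirectdef} by the displayed involution $-i$ in \S \ref{sdsn}, but it is the step that requires care.
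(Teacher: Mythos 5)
Your proposal is correct and follows essentially the same route as the paper: both rest on the equivariance of $\mu_w$ for $w(-1)=t$ (via \cite[Remark \ref{DQnonneg-oddcoeffsrmk}]{DQnonneg}), on Lemma \ref{filtsd} to show self-duality annihilates the odd $\hbar$-weight pieces --- in particular the first-order obstruction, so $Q\cP^{sd}/G^2 \simeq \cP$ --- and on the decomposition of Proposition \ref{prop3} to identify the remaining even coefficients with $\prod_{i>0}\mmc(\cone(\DR(Y/R)\to\DR(X/R))\hbar^{2i})$. The only (cosmetic) difference is that the paper runs an induction over the quotients $Q\cP^{sd}/G^j$ one step at a time rather than taking homotopy $\Z/2$-fixed points of the full product decomposition at once; also note the relevant filtration for the obstruction calculus is $G$ rather than $F$, a harmless slip in your wording.
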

\begin{proof}
This is much the same as \cite[Proposition \ref{DQvanish-quantpropsd}]{DQvanish}. Lemma \ref{filtsd} implies that $w$ gives rise to weak equivalences
\begin{align*}
 Q\cP(Y,\sL;0)^{sd}/G^{2i} &\to Q\cP(Y,\sL;0)^{sd}/G^{2i-1}\\
Q\cP(Y,\sL;0)^{sd}/G^{2i+1} &\to (Q\cP(Y,\sL;0)^{sd}/G^{2i})\by^h_{(Q\cP(Y,\sL;0)/G^{2i})}(Q\cP(Y,\sL;0)/G^{2i+1}).
\end{align*}

Combined with Proposition  \ref{prop3}, these  give  weak equivalences from $Q\cP(Y,\sL;0)^{\nondeg,sd}/G^{2i+1}$ to
\[
 (Q\cP(Y,\sL;0)^{\nondeg,sd}/G^{2i})\by \mmc(\hbar^{2i} \cone(\DR(Y/R) \to \DR(X/R))
\]
for all $i>0$. Moreover, \cite[Remark \ref{DQnonneg-oddcoeffsrmk}]{DQnonneg} ensures that for our choice of Levi decomposition $w$, the map $\mu_w$ is equivariant under the involutions $*$, so these equivalences are just given by taking homotopy $\Z/2$-invariants.
The result then follows by induction, the base case holding because $*$ acts trivially on  $ Q\cP(Y,\sL;0)/G^1=\cP(Y,X;0) $, so $ Q\cP(Y,\sL;0)^{sd}/G^1\simeq \cP(Y,X;0)$.
\end{proof}

%
\begin{corollary}\label{quantcorsd}
Take  a $0$-shifted Lagrangian morphism $(X,\lambda) \to (Y, \omega)$ of strongly quasi-compact Artin $N$-stacks over $R$, and  a line bundle $\sL$ on $X$ with a right $\sD$-module structure on $\sL^{\ten 2}$ (such as when $\sL$ is a square root of $K_{X}$). Then for any  self-dual $E_1$-quantisation $\tilde{\sO}_Y$ of the symplectic structure, there exist self-dual quantised Lagrangian structures in $Q\cP(Y,\sL;0)^{\nondeg,sd}$ lifting $\tilde{\sO}_Y \in Q\cP(Y,0)^{\nondeg,sd}$ if and only if the class
\[
 [\mu_w(-, \tilde{\sO}_Y)^{-1}\sigma(\tilde{\sO}_Y) -\omega] \in \hbar^2\H^2\DR(Y/R)\brhh
\]
lies in the kernel of
\[
 \hbar^2\H^2\DR(Y/R)\brhh \to \hbar^2\H^2\DR(X/R)\brhh,
\]
where $w \in \Levi_{\GT}^t(\Q)$ is an even associator.
\end{corollary}
\begin{proof}
 Our  space of interest is the homotopy fibre of the canonical map
\[
 Q\cP(Y,\sL;0)^{\nondeg,sd} \to Q\cP(Y,0)^{\nondeg,sd}\by^h_{\Sp(Y,0)}\Lag(Y,X;0)
\]
over $(\tilde{\sO}_Y,\omega,\lambda)$, where $\Sp(Y,0)= \Lag(Y,\emptyset;0)$ is the space of $0$-shifted symplectic structures as in \cite{poisson,DQnonneg}.

Substituting in Theorem \ref{quantpropsd} twice  (with empty Lagrangian for $Q\cP(Y,0)$), this becomes
\[
 \prod_{i>0}  \mmc( \cone(\DR(Y/R)\to \DR(X/R))\hbar^{2i})\by_{ \mmc( \DR(Y/R)^{[1]}\hbar^{2i})}\{\mu_w^{-1}\sigma(\tilde{\sO}_Y) -\omega\},
\]
and exactness of the sequence $ \H^1\cone(\DR(Y/R)\to \DR(X/R))\to \H^2\DR(Y) \to \H^2\DR(X)$ completes the proof.
\end{proof}



\subsection{Quantisations of higher Lagrangians}\label{higherrmk}

Given a Lagrangian $(X, \lambda)$ with respect to an  $n$-shifted symplectic structure $(Y,\omega)$ for $n > 0$, we now discuss how the techniques of this paper should adapt to give a notion of quantised co-isotropic structures and to establish their existence. The broad picture is that a quantisation takes the form of an $E_{n+1}$-algebra deformation of $\sO_{Y}$ acting on an $E_n$-algebra deformation of $\sO_{X}$.

If we exploit  Koszul duality for $P_{n+1}$-algebras, we may replace the filtered Hochschild complexes of \S \ref{centresn} with Poisson coalgebra coderivations on bar complexes to give $P_{n+2}$-algebras of derived multiderivations acting on $P_{n+1}$-algebras (instead of brace algebras acting on associative algebras); the details of this construction are worked out in \cite{melanisafronovI}, via \cite[\S 3.1]{CalaqueWillwacher}.
Proposition \ref{compatcor2} then generalises to give a variant  proof of the equivalence between $n$-shifted Lagrangians and non-degenerate $n$-shifted co-isotropic  structures, announced by Costello and Rozenblyum and proved by Melani and Safronov \cite{melanisafronovII} with an approach explicitly based on part of the argument in this paper; 
\cite{melanisafronovII}  also established quantisations for $n$-shifted  co-isotropic structures for $n>1$ via formality of the $E_{n+1}$ operad. We  now sketch a parametrisation of quantisations for higher Lagrangians, including the case $n=1$ not addressed in \cite{melanisafronovII}. 

\subsubsection{Almost commutative $E_k$-algebras}

We begin with the notion of a $BD_k$-algebra as a higher analogue of an almost commutative algebra. There is a  filtration on  the Lie operad given by arity, inducing a filtration on the free Lie algebra generated by any filtered complex. Taking the universal enveloping $E_k$-algebra of this Lie algebra then gives a filtered $E_k$-algebra, and this construction corresponds to  a filtration on the $E_k$ operad. We can  then define the $BD_k$ operad to be the $E_k$ operad equipped with this completed filtration, for $k \ge 1$. 

Explicitly, $BD_1$ is just the operad defined in \cite[\S 3.5.1]{CPTVV}, whose algebras are almost commutative DGAAs. For $k\ge 2$, the operad $BD_k$ is just given by the re-indexed good truncation filtration $F^p BD_k= \tau_{\ge p(k-1)}E_k$ --- this agrees with \cite[\S 3.5.1]{CPTVV} for $k=2$, but differs by the reindexation for higher $k$. In particular, almost commutative brace algebras are equivalent to $BD_2$-algebras.

Informally, an $n$-shifted quantisation of a morphism $A \to B$ of CDGAs consists of a  $BD_{n+1}$-algebra deformation  $\tilde{A}$ of $A$ acting on a  $BD_n$-algebra deformation  $\tilde{B}$ of $B$ in a sense we will now  make precise, assuming an additivity conjecture for $BD_n$-algebras. An $n$-shifted quantisation of a morphism $A \to B$ of stacky CDGAs will be an $n$-shifted quantisation of  $\hat{\Tot}A \to \hat{\Tot}B$ with curvature and subject to additional boundedness constraints.

\subsubsection{Centres}

From now on, we refer to $BD_k$-algebras in complete filtered cochain chain complexes (simplicially localised at levelwise filtered quasi-isomorphisms) as stacky $BD_k$-algebras.  
Adapting \cite[Theorem  5.3.1.14]{lurieHigherAlgebra} from $\infty$-operads to the operads $BD_k$  in filtered chain complexes   will  
give a stacky $BD_k$-algebra 
\[
 \oR\C\C_{BD_k,R}(A,D)
\]
associated to any morphism $A \to D$ of stacky $BD_k$-algebras over $R$, universal  with the property that there is a $BD_k$-algebra morphism $\oR\C\C_{BD_k,R}(A,D)\ten_R^{\oL} A \to D$ in the associated $\infty$-category. Explicitly, these centres will be given by the higher order  Hochschild complexes of  \cite{ginotHigherOrderHH}  equipped with a PBW filtration. 
The associated graded $\gr \oR\C\C_{BD_k,R}(A,D)$ is necessarily the centre of the morphism $\gr A \to \gr B$ of graded $P_k$-algebras, so is given by derived $P_k$ multiderivations from $\gr A$ to $\gr B$.

The universal property implies that $\oR\C\C_{BD_k,R}(A):=\oR\C\C_{BD_k,R}(A,A) $ is naturally an $E_1$-algebra in stacky $BD_k$-algebras, i.e. a stacky $E_1 \ten^{\oL}_{BV} BD_k$-algebra for the Boardman--Vogt tensor product $\ten_{BV}$. Moreover, for any morphism $A \to D$,  the centre $\oR\C\C_{BD_k,R}(A,D) $  then becomes an $\oR\C\C_{BD_k}(A)$-module in stacky $BD_k$-algebras.

For any morphism 
$A_1 \by A_2 \to D$, the idempotents in the domain  give a decomposition $D= D_1 \by D_2$, and by universality for each morphism $A \to D$ we thus have 
\[
 \oR\C\C_{BD_k,R}(R\by A, R \by D) \simeq \oR\C\C_{BD_k,R}(R,R) \by \oR\C\C_{BD_k,R}(A,D) = R \by \oR\C\C_{BD_k,R}(A,D).
\]
The centre of $R \by A \to R \by D$ in the category of  augmented stacky  $BD_k$-algebras over $R$ is just
\[
 \oR\C\C_{BD_k,R}(R\by A, R \by D)\by_{  (R \by D)}^h R,
\]
so the reasoning above shows that
\[
 \oR\C\C_{BD_k,R,+}(A,D):= \oR\C\C_{BD_k,R}(A,D)\by^h_D 0
\]
is naturally a non-unital stacky  $BD_k$-algebra, with $\oR\C\C_{BD_k,R,+}(D)$ a non-unital stacky $E_1 \ten^{\oL}_{BV} BD_k$-algebra.

Adapting Lemma \ref{semidirectdef}, we then have:
\begin{definition}
Given a stacky $E_1 \ten^{\oL}_{BV} BD_k$-algebra $C$ over $R$ and a $C$-module  $E$ in stacky $BD_k$-algebras over $R$,
 we define $ E_{[1]} \rtimes C $ to be the non-unital stacky  $E_1 \ten^{\oL}_{BV}BD_k$-algebra
\[
 C\by^h_{ \oR\C\C_{BD_k,R}(E) }\oR\C\C_{BD_k,R,+}(E),
\]
the morphism $C \to \oR\C\C_{BD_k,R}(E)$ existing by universality.
\end{definition}

%

\subsubsection{Quantised $n$-shifted relative polyvectors for $n>0$}

Given a morphism $\phi \co A\to B$ of  stacky CDGAs  over $R$, now consider the non-unital $E_1 \ten^{\oL}_{BV} BD_{n+1}$-algebra 
\[
 (\C,F):= \oR\C\C_{BD_{n+1},R}(A, \oR\C\C_{ BD_n,R}(B) )_{[1]} \rtimes \oR\C\C_{BD_{n+1},R}(A) 
\]
in complete filtered cochain chain complexes. Definition \ref{QPoldef} then adapts verbatim to give a complex $Q\widehat{\Pol}(A,B;n) $ equipped with filtrations $\tilde{F}$ and $G$. 

Since we wish $Q\widehat{\Pol}(A,B;n)[n+1]$ to have the structure of a DGLA with $ [\tilde{F}^i,\tilde{F}^j] \subset \tilde{F}^{i+j-1}$ and $[G^i, G^j]\subset G^{i+j}$, and acting as derivations on the bifiltered  $E_1 \ten^{\oL}_{BV} BD_{n+1}$-algebra $\hbar Q\widehat{\Pol}(A,B;n)$, we need to know that $\oR\C\C_{BD_k}(A)$ has the structure of a $BD_{k+1}$-algebra. The analogous statement for $k=1$  is the content of Lemma \ref{HHaclemma2}. In general, the property would follow from the following conjecture:

\begin{conjecture}\label{additivityconj}
 For $k \ge 1$, the additivity isomorphism  $E_{k+1} \simeq E_1 \ten^{\oL}_{BV}E_k$ of \cite[Theorem  5.1.2.2]{lurieHigherAlgebra} induces a map $BD_{k+1} \simeq E_1 \ten^{\oL}_{BV} BD_k$ of operads in complete filtered chain complexes.
\end{conjecture}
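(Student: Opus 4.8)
The plan is to upgrade Lurie's additivity equivalence $E_{k+1}\simeq E_1\ten^{\oL}_{BV}E_k$ \cite[Theorem 5.1.2.2]{lurieHigherAlgebra} to an equivalence of operads in complete filtered chain complexes, carrying the two $BD$-filtrations into one another. Since both $BD_{k+1}$ and $E_1\ten^{\oL}_{BV}BD_k$ are complete, it suffices to produce a map of filtered operads inducing an equivalence on associated gradeds; the latter statement is \emph{Poisson additivity}, namely that the graded $(k{+}2)$-Poisson operad (with the Lie bracket placed in weight $-1$, as in the proof of Lemma \ref{barcobarprop1}) is the Boardman--Vogt product of the associative operad with the correspondingly weighted graded $(k{+}1)$-Poisson operad. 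For $k\ge 1$ this is established in \cite{melanisafronovII} (compare also \cite{safronovPoissonRednCoisotropic}), and for $k=1$ it is exactly the Koszul self-duality up to the PBW shift recorded in the proof of Lemma \ref{barcobarprop1}, where the graded Poisson operad is written $\Com\circ\hbar^{-1}\Lie$. The reindexing $F^pBD_k=\tau_{\ge p(k-1)}E_k$ for $k\ge2$ is precisely what makes these weight normalisations line up across the equivalence, so no extra shift is needed.

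First I would set up the formalism: work throughout in the symmetric monoidal model category $dg\Mod_{\bG_m}(\Q[\dbar])$ of \S\ref{filtrnsn}, whose weak equivalences are the filtered quasi-isomorphisms, and re-run Lurie's proof of additivity --- Dunn's theorem on the little cubes operads --- internally to it. The geometric input (spaces of rectilinear embeddings of cubes) is insensitive to the $\dbar$-enrichment, so the content is to check that the bar/cobar and centraliser constructions appearing in the proof preserve the completed PBW filtrations. For this one uses the same bookkeeping as in Definitions \ref{betadef}--\ref{betastardef} and Lemmas \ref{betanice}, \ref{betastarnice}: a brace/bar description of the $E_k$-centre shows that higher operations of homological degree $r$ acquire weight $r$ under the arity filtration on the ambient free Lie algebra, which is exactly the normalisation built into $BD_k$. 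Equivalently, one can argue by obstruction theory over the tower $\{F/F^r\}$: the equivalence is known on $\gr_F$ by Poisson additivity, and the obstructions to lifting it compatibly up the tower live in cohomology of operadic derivation complexes that vanish after the PBW shift by the Koszul self-duality of the Poisson operad (the same vanishing that drives Lemma \ref{barcobarprop1}).

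The main obstacle is making the filtered Dunn theorem precise: Lurie's argument is phrased for $\infty$-operads in spaces and spectra, and transporting it to $\bG_m$-equivariant filtered complexes requires either a model-categorical incarnation compatible with the $\bG_m$-action and with completion, or a direct comparison showing that the completed PBW filtration is carried to the arity filtration on free Lie algebras under \emph{iterated} enveloping-algebra functors. The genuine subtlety --- and presumably the reason this is stated only as a conjecture --- is that for $k\ge1$ the Poisson bracket on $\gr BD_{k+1}$ is a secondary (homotopy-level) operation rather than a point-set one, so its interaction with the PBW filtration is not visible from the chain-level formulas alone and must be controlled through the additivity equivalence itself. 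A minor additional point is that $BD_{k+1}$ is defined using the \emph{completed} filtration, so one must check that completion commutes with $E_1\ten^{\oL}_{BV}(-)$ here; this holds because the filtration is exhaustive with weight pieces bounded below, so completions may be computed weight by weight as in \S\ref{filtrnsn}.
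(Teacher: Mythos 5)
The paper does not prove this statement at all: it is deliberately left as Conjecture \ref{additivityconj}, accompanied only by the remarks that on associated gradeds it would recover the Poisson additivity equivalence $P_{k+1}\simeq E_1\ten^{\oL}_{BV}P_k$ of Rozenblyum (unpublished) and Safronov \cite{safronovBraces} (so the conjectured map is automatically an equivalence if it exists), that for $k\ge 2$ it would follow from compatibility of additivity with the Grothendieck--Teichm\"uller action, and that a proof has been announced by Rozenblyum. So there is no proof in the paper to compare against, and what you have written is a programme rather than a proof; you acknowledge as much in your final paragraph.

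The concrete gap is exactly the one you flag: the entire content of the conjecture is the \emph{existence} of the filtered map refining Lurie's additivity equivalence, and neither of your routes produces it. ``Re-running Dunn's theorem internally to $dg\Mod_{\bG_m}(\Q[\dbar])$'' does not reduce to checking that bar/cobar and centraliser constructions preserve a PBW filtration, because the $BD$-filtrations are not induced by any structure on the geometric (rectilinear-embedding) input: for the $E_1$ factor the PBW filtration is genuinely extra, non-homotopy-invariant data, while on $E_{k+1}$ the filtration is only identified with a re-indexed good truncation after passing to chains, so compatibility of the additivity equivalence with these filtrations is precisely the statement to be proved (this is where GT-equivariance enters, as the paper notes for $k\ge2$). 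Your fallback obstruction-theoretic argument asserts that the obstructions to lifting the associated-graded equivalence up the tower $\{F/F^r\}$ vanish ``by the Koszul self-duality of the Poisson operad'', but no such vanishing is established: Lemma \ref{barcobarprop1} concerns bar--cobar resolutions of almost commutative algebras, not the operadic deformation complex controlling filtered lifts of $E_1\ten^{\oL}_{BV}BD_k$, and if these obstruction groups vanished so easily the statement would not have been left as a conjecture. Two smaller inaccuracies: Poisson additivity is due to Rozenblyum and to Safronov \cite{safronovBraces} rather than \cite{melanisafronovII}, and its $k=1$ case is a theorem about brace algebras, not ``exactly'' the PBW-shifted Koszul self-duality invoked in Lemma \ref{barcobarprop1}.
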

Here, $\ten^{\oL}_{BV}$ denotes the derived Boardman--Vogt tensor product, so the conjecture amounts to saying that an $A_{\infty}$-algebra in $BD_k$-algebras is naturally a $BD_{k+1}$-algebra. On passing to associated graded complexes, the equivalence would give $P_{k+1} \to E_1 \ten^{\oL}_{BV} P_k$, which has been proved to be an equivalence by  Rozenblyum (unpublished, cf. \cite[\S 3.4]{CPTVV}) and independently by Safronov \cite{safronovBraces}; thus the map in the conjecture is necessarily an equivalence if it exists. A proof of Conjecture \ref{additivityconj} has also been announced by Rozenblyum (see \cite[comment after Conjecture 3.5.7]{CPTVV}).
For $k \ge 2$, the conjecture  would follow if  additivity is compatible with the action of the Grothendieck--Teichm\"uller group.

The conjecture would also ensure that the centres $\oR\C\C_{BD_k,R}(A,D)$ above all exist by appealing directly to \cite[Theorem  5.3.1.14]{lurieHigherAlgebra} for $k \ge 1$, regarding $BD_k$-algebras as $E_{k-1}$-algebras in $BD_1$-algebras.

The definitions of \S\S \ref{affinesn}, \ref{compatsn} would all then adapt, replacing $Q\widehat{\Pol}(A,M;0)$ with $Q\widehat{\Pol}(A,B;n)$ and taking appropriate shifts. The space $Q\cP(A,B;n)$ of $n$-shifted quantisations of the pair $(A,B)$ could then  just be defined as
\[
\mmc(\tilde{F}^2Q\widehat{\Pol}(A,B;n)[n+1]),
\]
elements of which give rise to curved $E_{n+1}$-algebra deformations of $\hat{\Tot}A$ acting on curved $E_n$-algebra deformations of $B$.

We may define the  space $G\Iso(A,B;n)$ of $n$-shifted generalised isotropic structures to  be
\[
\mmc( \tilde{F}^2\cone(\DR(A/R)\llbracket\hbar\rrbracket  \to \DR(B/R)\llbracket\hbar\rrbracket)[n]),
\]
and for each $w \in \Levi_{\GT}(\Q)$, Definition \ref{mudef}  adapts to give a compatibility map
\[
 \mu_w(-,\Delta) \co \cocone(\DR(A/R) \to \DR(B/R)\llbracket\hbar\rrbracket/\hbar^j \to T_{\Delta}Q\widehat{\Pol}_w(A;B,0)/G^j
\]
for each quantisation $\Delta$, with  Definition \ref{Qcompdef}  then adapting to give a space $Q\Comp_w(A,B;n)$ of compatible quantised pairs. 

Propositions \ref{QcompatP1}, \ref{compatcor2} and \ref{quantprop} would all carry over directly, in particular giving a map from $ Q\cP(A,B;n)^{\nondeg}$ to  $G\Lag(A,B;n)$, the non-degenerate locus in $G\Iso(A,B;n) $. The techniques of \S \ref{stacksn} would then extend these to global constructions for Artin $N$-stacks.

\subsubsection{Self-duality}

The functor $D \mapsto D^{\op}$ sending an almost commutative algebra to its opposite gives an involutive endofunctor of the category of $BD_1$-algebras, and hence of the categories of $E_1 \ten^{\oL}_{BV} BD_k$-algebras. The universal property of centres then gives an anti-involution
\[
 -i \co \oR\C\C_{BD_k,R}(A,D)^{\op} \to \oR\C\C_{BD_k,R}(A^{\op},D^{\op}),
\]
which in the $k=1$ case is the anti-involution $-i$ of Lemma \ref{involutiveHH}. Defining an anti-involutive  $E_1 \ten^{\oL}_{BV} BD_k$-algebra to be a homotopy fixed point of the involutive endofunctor $(-)^{\op}$, the anti-involution above makes $  \oR\C\C_{BD_k,R}(A,D)$ a stacky anti-involutive $BD_k$-algebra whenever $A$ and $D$ are stacky anti-involutive $BD_k$-algebras. In fact, this is necessarily the centre of $A \to D$ in the category of stacky anti-involutive $BD_k$-algebras --- the operad governing anti-involutive $BD_k$-algebras is $BD_k \circ  \Q.(\Z/2)$, regarding the algebra $\Q .(\Z/2)$ as an operad of arity $1$;  the distributivity transformation is given by  anti-involution.

As in \S \ref{sdsn}, we would then have  an involution $(-)^*$ on  
the (conjectural) DGLA $Q\widehat{\Pol}(A,D;0)[n+1]$ given by  
$
 \Delta^*(\hbar):= i(\Delta)(-\hbar)^t, 
$
and we could define $ Q\cP(A,B;n)^{sd}$ to be the fixed points of the resulting $\Z/2$-action, so its points give rise to involutive quantisations.

The proof of Theorem \ref{quantpropsd} will then adapt to give:
\begin{theorem}\label{higherquantpropsd}
Take  a morphism $X \to Y$ of strongly quasi-compact Artin $N$-stacks over $R$. If Conjecture \ref{additivityconj} holds, then for any even associator $w \in \Levi_{\GT}^t(\Q)$, the induced map 
\[
  Q\cP(Y,X;n)^{\nondeg,sd} \to G\Lag(Y,X;n)^{sd}
\]
(from non-degenerate self-dual quantisations to generalised self-dual Lagrangians)
 is a weak equivalence for all $n>0$.

In particular, $w$ associates a canonical choice of self-dual quantisation of  $(Y,X)$ to   every $n$-shifted Lagrangian structure of $X$ over $Y$.
\end{theorem}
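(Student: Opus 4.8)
The plan is to follow the proof of Theorem \ref{quantpropsd} essentially line by line, substituting $Q\widehat{\Pol}(A,B;n)$ for $Q\widehat{\Pol}(A,M;0)$, the shift $[n+1]$ for $[1]$, and $\cone(\DR(A/R)\to\DR(B/R))[n]$ for the unshifted de Rham cone, once the $n$-shifted apparatus sketched in \S\ref{higherrmk} is in place. First I would record that, granting Conjecture \ref{additivityconj}, the centres $\oR\C\C_{BD_{n+1},R}(A)$ and $\oR\C\C_{BD_n,R}(B)$ exist with their $E_1\ten^{\oL}_{BV}BD_{n+1}$- and $BD_n$-algebra structures, so that the DGLA $Q\widehat{\Pol}(A,B;n)[n+1]$ with its filtrations $\tilde F$ and $G$, the spaces $Q\cP(Y,X;n)$, $G\Iso(Y,X;n)$, $G\Lag(Y,X;n)$, the compatibility map $\mu_w$, and the space $Q\Comp_w(Y,X;n)$ are all defined, the notion of non-degeneracy makes sense, and the higher versions of Propositions \ref{QcompatP1}, \ref{compatcor2}, \ref{quantprop} and \ref{prop3} hold, the proofs being formally the same. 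In particular Proposition \ref{prop3} then supplies, for every $w$ and every $i>0$, a weak equivalence $Q\cP(Y,X;n)^{\nondeg}/G^{2i+1}\xra{\sim}(Q\cP(Y,X;n)^{\nondeg}/G^{2i})\by\mmc(\hbar^{2i}\cone(\DR(Y/R)\to\DR(X/R))[n])$ together with $Q\Comp_w(Y,X;n)^{\nondeg}\xra{\sim} Q\cP(Y,X;n)^{\nondeg}$.

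Next I would set up self-duality exactly as in \S\ref{sdsn}: the involutive endofunctor $D\mapsto D^{\op}$ on $E_1\ten^{\oL}_{BV}BD_k$-algebras, combined with the universal property of centres, gives an involution $-i$ on $\oR\C\C_{BD_k,R}(A,D)$, hence an involution $(-)^*$ with $\Delta^*(\hbar):=i(\Delta)(-\hbar)^t$ on $Q\widehat{\Pol}(Y,X;n)[n+1]$, and thus a $\Z/2$-action on $Q\cP(Y,X;n)$ whose fixed points are $Q\cP(Y,X;n)^{sd}$. The crucial observation, as in Lemma \ref{involutiveHH}, is that $-i$ acts trivially on $\gr_G^0$, so it acts as $(-1)^k$ on $\gr_G^k=\hbar^k\gr_G^0$, giving the $n$-shifted form of Lemma \ref{filtsd}, namely $\gr_G^k\tilde F^pQ\widehat{\Pol}(Y,X;n)^{sd}\simeq\gr_G^k\tilde F^pQ\widehat{\Pol}(Y,X;n)$ for $k$ even and $0$ for $k$ odd. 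From this I obtain the comparison maps $Q\cP(Y,X;n)^{sd}/G^{2i}\to Q\cP(Y,X;n)^{sd}/G^{2i-1}$ and $Q\cP(Y,X;n)^{sd}/G^{2i+1}\to(Q\cP(Y,X;n)^{sd}/G^{2i})\by^h_{(Q\cP(Y,X;n)/G^{2i})}(Q\cP(Y,X;n)/G^{2i+1})$, and, combined with the $G$-graded description above, weak equivalences from $Q\cP(Y,X;n)^{\nondeg,sd}/G^{2i+1}$ to $(Q\cP(Y,X;n)^{\nondeg,sd}/G^{2i})\by\mmc(\hbar^{2i}\cone(\DR(Y/R)\to\DR(X/R))[n])$ for every $i>0$.

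I would then conclude by induction on $j$. For $w\in\Levi_{\GT}^t(\Q)$, the higher analogue of \cite[Remark \ref{DQnonneg-oddcoeffsrmk}]{DQnonneg} (using $w(-1)=t$) shows that $\mu_w$ is equivariant for the involutions $*$, so the equivalences just produced are obtained by passing to homotopy $\Z/2$-fixed points in the equivalences of Proposition \ref{prop3}. The base case is immediate, since $*$ acts trivially on $Q\cP(Y,X;n)/G^1=\cP(Y,X;n)$, so that $Q\cP(Y,X;n)^{sd}/G^1\simeq\cP(Y,X;n)$, which the higher version of Proposition \ref{compatcor2} identifies with $G\Lag(Y,X;n)^{sd}/G^1$; the "in particular" clause follows by taking the canonical point in the resulting product description, as in Remark \ref{quantrmk}.

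The hard part is the $*$-equivariance of $\mu_w$. In the $0$-shifted case this is verified by explicit manipulation of Hochschild cochains and of the $(-1)$-Drinfel'd associator $t$; here those manipulations must be recast using only the universal property of the $BD_k$-centres, checking that a $\Z/2$-equivariant formality datum for $E_{n+1}$ (equivalently, the choice $w\in\Levi_{\GT}^t(\Q)$) intertwines the opposite-algebra involutions on both the $E_{n+1}$- and the $E_n$-sides of the compatibility diagram of Definition \ref{mudef}. This is also exactly where Conjecture \ref{additivityconj} is indispensable: without it the $BD_{n+1}$-structure on $\oR\C\C_{BD_{n+1},R}(A)$ — needed even to state that $Q\widehat{\Pol}(A,B;n)[n+1]$ is a DGLA acting by derivations, and to construct the centres via \cite[Theorem 5.3.1.14]{lurieHigherAlgebra} — is unavailable. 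The remaining steps, verifying $E_k$-analogues of the semidirect-product and bar-cobar constructions of \S\ref{centresn}, are routine given Koszul duality for $P_{k+1}$-algebras but account for most of the bookkeeping.
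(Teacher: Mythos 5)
Your proposal is correct and follows essentially the same route as the paper: granting Conjecture \ref{additivityconj}, one transports the $n$-shifted analogues of Propositions \ref{QcompatP1}--\ref{quantprop} and \ref{prop3}, the involution $(-)^*$ from the universal property of centres, and the higher form of Lemma \ref{filtsd}, then inducts on the filtration $G$ using $*$-equivariance of $\mu_w$ for $w \in \Levi_{\GT}^t(\Q)$ (the analogue of the cited remark on odd coefficients), with base case $Q\cP(Y,X;n)^{sd}/G^1 \simeq \cP(Y,X;n)$. Your identification of exactly where the conjecture is indispensable matches the paper's discussion in \S\ref{higherrmk}, so no gap remains beyond what the paper itself defers to the conjectural $BD$-additivity.
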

For $n>1$, this has been proved without the self-duality conditions by \cite{melanisafronovII} after this paper was first written, by using a direct formality argument.  Their argument also implies these self-dual and curved statements. For $n=1$, existence of deformation quantisations for all co-isotropic structures when $Y$ is locally of finite presentation follows from the  $P_2$-algebra equivalence between  $p_w\C\C_{R,BD_1}(\sO_Y)$ and polyvectors  established more recently  in \cite{DQpoisson}.

\begin{remark}[Twisted quantisations]
 
One significant difference between Theorems \ref{quantpropsd} and \ref{higherquantpropsd} is that the former incorporates the data of a line bundle. Similar input data are not essential for positively shifted quantisations because a  commutative algebra is canonically isomorphic to its opposite $E_1$-algebra, whereas  $\sO_{X}$ is not in general a right $\sD$-module.

However, by generalising Remark \ref{curvedsdrmk} we still expect a sensible notion of twisted quantisations for $n$-shifted Lagrangians, fibred over the space $\oR\Gamma(Y,B^{n+2}\bG_m)\by^h_{\oR\Gamma(X,B^{n+2}\bG_m) }\{1\}$ of  pairs $(\sG,\sL)$ with $\sG$ a $B^{n+1}\bG_m$-torsor on $Y$, and $\sL$ a trivialisation of $\phi^*\sG$ on $X$. Self-dual (i.e. involutive) quantisations would then be parametrised by $\oR\Gamma(Y,B^{n+2}\mu_2)\by^h_{\oR\Gamma(X,B^{n+2}\mu_2) }\{1\}$. Adapting \cite[Theorem 5.3.2.5]{lurieHigherAlgebra} from filtered $E_{n+2}$-algebras to $BD_{n+2}$-algebras would establish the required actions of $(n+2)$-groupoids $\ho\Lim_{i \in \Delta} B^{n+2}D^i(A)^{\by}$  generalising $\mathrm{TLB}$ from \S \ref{lbsn}.

However, since these spaces will come from unipotent group actions on quantised polyvectors, the actions of the torsion groups $B^{n+1}\mu_2(A),  B^{n+1}\mu_2(B)$ must be trivial, so  the spaces of twisted self-dual quantisations will be canonically equivalent as $(\sG,\sL)$ varies.
\end{remark}

\section{A ``Fukaya category'' for algebraic Lagrangians}\label{fukayasn}

In \cite[\S 5.3]{BehrendFantechiIntersections}, Behrend and Fantechi discussed the construction of a dg category whose objects are local systems on Lagrangian submanifolds of a complex symplectic variety. An extensive survey of related results is given in  \cite[Remark 6.15]{BBDJS}, where Joyce et al. discuss possible approaches to constructing such a ``Fukaya category'' with complexes of vanishing cycles as morphisms. 

One related construction not mentioned there is the $2$-category $\ddot{L}(T^{\vee}U)$ of \cite[\S 4]{KapustinRozansky3DTFT2}, which depends only on the holomorphic symplectic structure of the cotangent bundle $T^{\vee}U$. The $1$-categories of morphisms are given by matrix factorisation categories, so applying periodic cyclic homology as in \cite[Theorem 1.3]{efimovHCMF} would turn the $2$-category into a $\Z/2$-graded dg category over $\Cx((\hbar))$ with complexes of vanishing cycles as morphisms. It also follows from \cite[Theorem 1.4]{efimovHCMF} that applying negative cyclic homology would give a $\Cx\brh$ form of this category resembling a $\Z/2$-graded version of a part of our dg category $\cF$ in Definition \ref{fukayadef} below.

As described in \cite{joyceFukaya}, upgrading complexes of vanishing cycles to form a dg category presents a serious challenge,  
but on a complex symplectic manifold, \cite[Remark 6.15]{BBDJS} explains that   a likely candidate for the  subcategory of smooth Lagrangians is given  by the derived category of simple holonomic DQ modules for a DQ algebroid quantisation of the sheaf of analytic functions, by combining the results of  Kashiwara and Schapira \cite{kashiwaraschapira} (cf. \cite[\S 3.3]{schapiraMicrolocalSurvey}) with \cite{DAgnoloSchapira}. It is this approach which generalises naturally in our setting.

\subsection{Quantised intersections and internal $\Hom$s}\label{QIntHom}

Given a $BD_1$-algebra $\tilde{A}$ with right and left actions on $BD_0$-algebras $\tilde{B}$ and $\tilde{C}$, respectively, \cite[Proposition 5.8 and Theorem 5.10]{safronovPoissonRednCoisotropic} give a natural $BD_0$-algebra structure on $\tilde{B}\ten^{\oL}_{\tilde{A}}\tilde{C}$. Since $BD_1$-algebras acting on $BD_0$-algebras are a special case of our definition of quantised co-isotropic structures in Definition \ref{QPdef}, this can be interpreted for these cases as saying that the intersection of quantised $0$-shifted co-isotropic structures is a $(-1)$-shifted quantisation of the   intersection.

The purpose of this section is first to generalise this (Proposition \ref{tenQprop})  by working with both stacky CDGAs and non-trivial line bundles. We will then give an analogous result  (Proposition \ref{HomQprop}) for $\Hom$s instead of tensors.

\begin{lemma}\label{coderBVlemma}
Given a stacky CDGA $A$, regarded as an almost commutative DGAA with trivial filtration, any element $\phi \in \gamma_r\C\C_R(A,A)$ is a differential operator of order $\le r$ with respect to the shuffle multiplication of Definition \ref{bardef}, when  regarded as a coderivation on the bar construction $\b A$.  
\end{lemma}
\begin{proof}

To say that $\phi$ is has order $\le r$ is equivalent to vanishing of the  map $[\phi]_r \co (\b A)^{\ten r+2} \to \b A$ given by
 \begin{align*}
 a_0\ten \ldots \ten a_r\ten b \mapsto &[\ldots[[\phi,a_0],a_1]\ldots ,a_r](b),
                                &= \sum_{I \subset \{a_0, \ldots,a_r\}} (-1)^{|I|} (\prod_{j \notin I}a_j) \phi((\prod_{i \in I} a_i)b)
 \end{align*}
 where $a_i \in \b A$ is regarded as an element of  $\cHom_R(\b A,\b A)$ via the shuffle multiplication,  $[-,-]$ denotes the commutator in $\End_R(\b A)$, and $\prod$ is defined using the shuffle product with appropriate Koszul signs.

By construction, $[\phi]_r(a_0\ten \ldots \ten a_r\ten b )=0$ whenever any $a_i\in R$. Since $\b A = R \oplus \beta^1\b A$, it follows that  $(\b A)^{\ten (r+2)}$ is the sum of  $\beta^{r+1}((\b A)^{\ten (r+1)})\ten \b A$ and a subspace on which $[\phi]_r$ automatically vanishes. 

By definition of the filtration $\gamma_r$ in Definition \ref{HHdef0}, the coderivation $\phi$ sends  $\beta^j \b A$ to $\beta^{j+1-r}\b A$, for the filtration $\beta$ of Definition \ref{betadef}. Since shuffle multiplication preserves the filtration, it follows that $[\phi]_r$ sends $\beta^j( (\b A)^{\ten r+2})$ to $ \beta^{j+1-r}\b A$, and in particular the composite of $[\phi]_r \co \beta^{r+1} (\b A)^{\ten r+2}) \to \b A $ with the cogenerator map $\b A \to A_{[-1]}$ vanishes.

Combining the last two paragraphs, it follows that $[\phi]_r$ vanishes on cogenerators. If we denote the iterated shuffle multiplication by $  \nabla_{i} \co \b A^{\ten i} \to \b A$, then the maps $ \nabla_{i+1} \circ (\id^{\ten i} \ten \phi \circ \nabla_{j})$ are all $\nabla_{i+j}$-coderivations. In particular, this implies that $[\phi]_r$ is a $\nabla_{r+2}$-coderivation, being an alternating sum of such. Since  it vanishes on cogenerators, it must therefore be zero.
\end{proof}

\begin{lemma}\label{coconnBVlemma}
 Take  a stacky CDGA $A$, an $A$-module $M$, a coderivation $\phi \in \gamma_r\C\C_R(A,A)$ as in Lemma \ref{coderBVlemma}, and an element $\theta \in \gamma_r\C\C_R(A,\End_R(M))$ with respect to the trivial filtration on $M$. 
 Regarding $\theta$ as a map $(\b A)\ten_R M \to M$, the associated $\phi$-coconnection $\theta_{\phi} \co (\b A)\ten_R M \to (\b A)\ten_R M$, given by
 \[
  (\phi \ten \id_M) +  (\id_{\b A}\ten \theta) \circ (\mu_{\b A} \ten \id_M)
 \]
for the comultiplication $\mu_{\b A}\co \b A \to (\b A) \ten (\b A)$,
  has order $\le r$ with respect to the shuffle multiplication by elements of $\b A$. 
\end{lemma}
\begin{proof}
 This proceeds in exactly the same way as the proof of Lemma \ref{coderBVlemma}. It suffices to establish vanishing of the map
 $[\theta_{\phi}]_r \co (\b A)^{\ten r+2}\ten M \to (\b A)\ten M$ given by
 \begin{align*}
 a_0\ten \ldots \ten a_r\ten m \mapsto &[\ldots[[\theta_{\phi},a_0],a_1]\ldots ,a_r](m).
 \end{align*}
 This automatically vanishes whenever any $a_i \in R$, so it suffices to show that it vanishes on
 $\beta^{r+1}((\b A)^{\ten (r+1)})\ten (\b A)\ten M$.
 
 The conditions that $\phi,\theta \in \gamma_r$ then imply that 
 the composite of $[\theta_{\phi}]_r \co \beta^{r+1} (\b A)^{\ten r+2})\ten M \to (\b A) \ten M$ with the cogenerator map $(\b A)\ten M \to M$ vanishes, so $[\theta_{\phi}]_r$ vanishes on cogenerators. Since $[\theta_{\phi}]_r$ is a $(\nabla_{r+2},[\phi]_r)$-coconnection, it thus vanishes everywhere.
 \end{proof}

\begin{lemma}\label{coconnBVlemmaHom}
 Under the conditions of Lemma \ref{coconnBVlemma}, the $\phi^*$-connection $\theta_{\phi^*} \co \cHom_R(\b A, M) \to \cHom_R(\b A, M)$, given by
 \[
 f \mapsto  f \circ \phi + \theta \circ (\id_{\b A} \ten f)\circ \mu_{\b A},
 \]
  has order $\le r$ with respect to the shuffle multiplication by elements of $\b A$.
\end{lemma}
\begin{proof}
 This works in exactly the same way as Lemma \ref{coconnBVlemma}. The question reduces to showing that the similarly defined commutator map $[\theta_{\phi^*}]_r \co \cHom_R((\b A)^{\ten r+2}, M) \to \cHom_R(\b A, M)$ vanishes, but this follows from the vanishing of $[\theta_{\phi}]_r$ in the proof of Lemma \ref{coconnBVlemma}.
 
\end{proof}

\begin{definition}\label{QPpairdef}
 Given  morphisms $C \la A\to B$ of stacky CDGAs  and strict line bundles $M$ and $N$ over $B$ and $C$ respectively, define the spaces $Q\cP(A,M,N;0)$ and $Q\cP(A,M^{\op},N;0)$ to be the homotopy fibre products
 \begin{align*}
  Q\cP(A,M,N;0)&:=Q\cP(A,M;0)\by^h_{Q\cP(A,0)}Q\cP(A,N;0)\\
  Q\cP(A,M^{\op},N;0)&:=Q\cP(A,M;0)\by^h_{i,Q\cP(A,0)}Q\cP(A,N;0),
 \end{align*}
where $i$ is the involution of Lemma \ref{involutiveHH}, which sends a quantisation $\tilde{A}$ of $A$ to the opposite $BD_1$-algebra $\tilde{A}^{\op}$.
 \end{definition}
In other words, elements of $Q\cP(A,M,N;0)$ consist of quantised co-isotropic structures on the pairs $(A,M)$ and $(A,N)$, with the same underlying    $0$-shifted quantisation   $\tilde{A}$ of $A$. On the other hand, elements of $Q\cP(A,M^{\op},N;0)$ consist of quantised co-isotropic structures on the pairs $(A,M)$ and $(A,N)$, but with opposite underlying    $0$-shifted quantisations  $\tilde{A}^{\op}$ and $\tilde{A}$ of $A$. Thus in $Q\cP(A,M,N;0)$, both $M$ and $N$ are being deformed as certain left $\tilde{A}$-modules, while in $Q\cP(A,M^{\op},N;0)$ we are deforming $M$ as a right  $\tilde{A}$-module and $N$ as a left  $\tilde{A}$-module.

%

\begin{proposition}\label{tenQprop}
 Given  morphisms $C \la A\to B$ of stacky CDGAs  and strict line bundles $M$ and $N$ over $B$ and $C$ respectively, there is a natural derived tensor product construction
 \[
  Q\cP(A,M^{\op},N;0)\to Q\cP(M\ten_A^{\oL}N,-1)
 \]
to the space $Q\cP(M\ten_A^{\oL}N,-1)= Q\cP(R, M\ten_A^{\oL}N;0)$ of $(-1)$-shifted quantised Poisson structures on the line bundle $M\ten_A^{\oL}N$ over $B\ten_A^{\oL}C$, as in \cite[Definition \ref{DQvanish-Qpoissdef}]{DQvanish}.
\end{proposition}
\begin{proof}
 We adapt the approach of \cite[Proposition 5.8 and Theorem 5.10]{safronovPoissonRednCoisotropic}. We will deform the Hochschild homology complex 
 \[
  \C\C^R(A,N\ten_RM)_{\#}:=\bigoplus_n (M\ten_R A^{\ten_R n} \ten_RN)_{[-n]}, 
 \]
which has chain differential $\delta \pm b$, for the 
Hochschild differential
\begin{align*}
b(m,a_1, \ldots , a_r,n) = &(ma_1,a_2, \ldots, a_r,n)\\
 &+ \sum_{i=1}^{r-1}(-1)^i (m,a_1, \ldots, a_{i-1}, a_ia_{i+1}, a_{i+2}, \ldots, a_r,n)\\
&+ (-1)^r (m,a_1, \ldots, a_{r-1},a_rn).
\end{align*} 
Note that we may write $\C\C^R(A,N\ten_RM)_{\#}= (M\ten_R\b A \ten_RN)_{\#}$ for the bar construction $\b$ of Definition \ref{bardef}, and regard this as the cofree left $\b A$-comodule cogenerated by $N\ten_RM$. Then  $b$ is the  $\b A$-coderivation given on cogenerators by the difference of the multiplication maps $M\ten_R A \ten_RN \to M\ten_RN$.  

There is a graded-commutative multiplication on $\C\C^R(A,C\ten_RB)$ given by combining those on $B$ and $C$ with the shuffle multiplication $\nabla$ on $\b A$ from Definition \ref{betadef}. This makes $\C\C^R(A,C\ten_RB)$ a model for the stacky CDGA $B\ten_A^{\oL}C$, and similarly the $\C\C^R(A,C\ten_RB)$-module
  $\C\C^R(A,N\ten_RM)$ is a strict line bundle, and a model for the   $M\ten_A^{\oL}N$. 

Since the map $Q\widehat{\Pol}_R(A,N;0) \to Q\widehat{\Pol}_R(A,0;0)$ of filtered DGLAs is surjective, the map $Q\cP(A,N;0) \to Q\cP(A,0)$ is a fibration, so a model for   $Q\cP(A,M^{\op},N;0)$ is given by the fibre product $Q\cP(A,M;0)\by_{i,Q\cP(A,0)}Q\cP(A,N;0)$.
Given an element $(\Delta_A,\Delta_M,\Delta_N) \in Q\cP(A,M^{\op},N;0) $, we construct an operator  $ (\Delta_{M})_{\Delta_{A}}\mp(\Delta_{N})_{\Delta_{A}}$ on $\C\C^R(A,N\ten_RM)\brh $
 by first extending $\Delta_A$ to a coderivation on $\b A$ as in Lemma \ref{coderBVlemma}, then constructing  coconnections associated to $ \Delta_{M}, \Delta_{N}$ as in Lemma \ref{coconnBVlemma}.

 It suffices to show that $ (\Delta_{M})_{\Delta_{A}}\mp(\Delta_{N})_{\Delta_{A}}$ is a differential operator in $\prod_{i \ge 1} \hbar^iF_{i+1}\sD_{\C\C^R(A,C\ten_RB)}(\C\C^R(A,N\ten_RM))$, since it then defines an element of  $Q\cP(M\ten_A^{\oL}N,-1)$. Equivalently, for arbitrary elements $x_i \in \C\C^R(A,C\ten_RB)$, this says that we want the commutator $[\ldots[[(\Delta_{M})_{\Delta_{A}}\mp(\Delta_{N})_{\Delta_{A}} ,x_1],x_2]\ldots ,x_r]$ to be  divisible by $\hbar^{r-1}$.
 
 Since $\Delta_A \in \prod_{i\ge 1} \hbar^i\gamma_{i+1}$ 
 and $\Delta_M, \Delta_N \in \prod_{i\ge 1} \hbar^i(\gamma F)_{i+1} \subset \prod_{i\ge 1} \hbar^i\gamma_{i+1}$,  working modulo $\hbar^{r-1}$ these all lie in $\gamma_{r-1}$,  so
 Lemma \ref{coconnBVlemma}  implies that     the operator $(\Delta_{M})_{\Delta_{A}}\mp(\Delta_{N})_{\Delta_{A}} $ has order $\le r-1$ with respect to the shuffle multiplication by elements of $\b A$, giving  the  commutator above  the required property whenever the elements $x_i$ all lie in $\b A$.
 
 Finally, for $y \in B\ten C$ we have $[\Delta_A,y]=0$, so  the commutator $[(\Delta_{M})_{\Delta_{A}}\mp(\Delta_{N})_{\Delta_{A}} ,y]$ is $\b A$-linear. 
 Moreover, since the filtration $F$ is almost commutative,  for $y_i \in B\ten C$ we have $[\ldots[[(\Delta_{M})_{\Delta_{A}}\mp(\Delta_{N})_{\Delta_{A}} ,y_1],y_2]\ldots ,y_r] \in \prod_{i\ge 1} \hbar^i(\gamma F)_{i+1-r} \subset \prod_{i\ge r-1} \hbar^i\gamma_{i+1-r}$, 
 so applying Lemma \ref{coconnBVlemma} modulo $\hbar^{r+s-1}$ with the trivial coderivation shows that for $x_j \in \b A$, the commutator 
 \[
  [\ldots[[[\ldots[[(\Delta_{M})_{\Delta_{A}}\mp(\Delta_{N})_{\Delta_{A}} ,y_1],y_2]\ldots ,y_r] ,x_1],x_2]\ldots ,x_s]
 \]
 is divisible by $\hbar^{r+s-1}$, as required.
 \end{proof}
 
\begin{remarks}\label{compattenQrmk}
It is natural to ask how this intersection construction for quantisations relates to the natural constructions of generalised symplectic structures on Lagrangian intersections. Even for unquantised shifted Poisson intersections in \cite[\S\S 3 and 4]{melanisafronovII} this is not spelt out, but we expect that it should be possible to formulate compatibility using a map with a target DGAA related to   the brace tensor product $\CCC(A, \sD_B)\ten^{\oL}_{\CCC(A)}\CCC(A,\sD_C) $. 
 
However, the relation of intersections  with self-duality will be much more subtle, because the notion of self-duality depends on choices of line bundles with  right $\sD$-module structures. In virtually LCI settings, where the dualising complex is a line bundle, we expect the intersection of self-dual quantised $0$-shifted co-isotropic structures will give a self-dual $E_0$-quantisation via Proposition \ref{tenQprop}.
\end{remarks}

Proposition \ref{tenQprop} has the following generalisation, with much the same proof, just with an additional $A_{\infty}$-action by $(A_2\brh, \Delta_{A_2})$ to incorporate:
 \begin{proposition}\label{tenQcorrprop}
 Given  morphisms $A_1\to B$ and $A_1\ten A_2 \to C$ of stacky CDGAs  and strict line bundles $M$ and $N$ over $B$ and $C$ respectively, there is a natural derived tensor product construction
 \[
  (Q\cP(A_1,M;0) \by Q\cP(A_2,0) )\by_{(i\ten \id),Q\cP(A_1\ten A_2,0)} Q\cP(A_1\ten A_2,N;0)\to  Q\cP(A_2,M\ten_{A_1}^{\oL}N;-1)
 \]
to the space of $0$-shifted quantised co-isotropic structures on the line bundle $M\ten_{A_1}^{\oL}N$ on $B\ten_{A_1}^{\oL}C$ over $A_2$.
\end{proposition}

\begin{proposition}\label{HomQprop}
 Given  morphisms $C \la A\to B$ of stacky CDGAs  and strict line bundles $M$ and $N$ over $B$ and $C$ respectively, there is a natural derived $\Hom$ construction 
 from
$  
Q\cP(A,M,N;0) 
 $
to the space of $R$-linear deformations of $\oR\cHom_A(M,N)$ given by differential operators
\[
 \Delta \in \prod_{i \ge 1} \hbar^iF_{i+1} \hat{\Tot}\sD_{B\ten^{\oL}_AC}( \oR\cHom_A(M,N)).
\]

In particular, if $\oR\cHom_A(M,N)$ is an invertible $B\ten^{\oL}_AC$-module, this gives  a map 
\[
  Q\cP(A,M,N;0)\to Q\cP(\oR\cHom_A(M,N),-1)
 \]
to the space of $(-1)$-shifted quantised Poisson structures on the line bundle $\oR\cHom_A(M,N)$ over $B\ten_A^{\oL}C$.
\end{proposition}
\begin{proof}
The construction arises by sending $(\Delta_A,\Delta_M,\Delta_N) \in Q\cP(A,M,N;0) $, to the differential operator  $ ((\Delta_{M})_{\Delta_{A}})^*\mp(\Delta_{N})_{\Delta_{A}^*}$ on $\cHom_R(M\ten \b A,N)\brh$ defined analogously to  Proposition \ref{tenQprop}. Explicitly, $ (\Delta_{M})_{\Delta_{A}}$ on $M \ten \b A\brh$ is  given by Lemma \ref{coconnBVlemma}, and then pre-composition yields an operator $((\Delta_{M})_{\Delta_{A}})^*$ on $\cHom_{R\brh}(M\ten \b A\brh,N\brh) \cong\cHom_R(M\ten \b A,N) $. The operator $(\Delta_{N})_{\Delta_{A}^*}$ is given by Lemma \ref{coconnBVlemmaHom}.
\end{proof}

\subsection{DQ modules associated to quantised Lagrangians}

Since we are working algebraically rather than analytically, our analogue of a DQ module is 
simply an $\hbar$-adically complete module over a fixed quantisation $\tilde{\sO}_{Y}$  of ${\sO}_{Y}$. When $Y$ is a derived DM  stack, we can interpret $\tilde{\sO}_{Y}$ as an $A_{\infty}$-algebroid deformation of $\sO_Y$ on the \'etale site of $Y$ as in Example \ref{algdex}.(\ref{algdexDM}), and DQ modules are then objects of its derived dg category $\oR\Lim_i\sD_{dg}(\tilde{\sO}_{Y}/\hbar^i)$. 
When $Y$ is a derived Artin stack, the deformation $\tilde{\sO}_{Y}$ is defined on a site of stacky CDGAs and may incorporate curvature, so we have to be a little more careful. Essentially, we take a DQ-module to be a module for the curved $A_{\infty}$-algebra   $\tilde{\sO}_{Y}$,  but there are boundedness conditions coming from $\hat{\Tot}$ as in Remark \ref{curvedrmk}. 

For simplicity, we now just describe the full dg subcategory of DQ modules coming from quantised Lagrangians. The idea is that for a line bundle $\sL$ on a 
derived Lagrangian $\phi\co X \to Y$,  each deformation quantisation $(\tilde{\sO}_{Y}, \Delta_{\sL})$ of $(\sO_{Y}, \sL)$ gives rise to such a DQ 
 module as $\oR \phi_*\tilde{\sL}$, where as in Remark \ref{algdrmk},
\[
 \tilde{\sL} := (\sL\brh, \delta + \Delta_{\sL} \cdot-) 
\]
is the left $\phi^{-1}\tilde{\sO}_{Y}$-module associated to the quantisation $\Delta_{\sL}$. 

    \begin{definition}\label{QCIdgdefglobal}
Given an $E_1$-quantisation $\tilde{\sO}_Y \in Q\cP(Y,0)$ of a derived Artin $n$-stack $Y$ over $R$, we define the $R\brh$-linear dg category $Q\C\cI_{\dg}(\tilde{\sO}_Y)$  of quantised co-isotropic structures over $\tilde{\sO}_Y$ as follows. 

There is an object for each quantised co-isotropic structure $(\tilde{\sO}_Y, \tilde{\sL}) \in Q\cP(Y,\sL;0)\by^h_{Q\cP(Y,0)}\{\tilde{\sO}_Y\}$  over $\tilde{\sO}_Y$, for each  line bundle $\sL$ on each derived Artin stack $\phi \co X \to Y$ representable over $Y$ by quasi-compact quasi-separated derived algebraic spaces. 

  Given objects $\tilde{\sL}_1, \tilde{\sL}_2$, we set the  associated $\Hom$-complex  
$
 \oR\hat{\HHom}_{\tilde{\sO}_Y}(\oR \phi_{1*}\tilde{\sL}_1,\oR \phi_{2*}\tilde{\sL}_2)
 $
 to be the homotopy limit, taken over all 
 homotopy formally \'etale morphisms $f \co \Spec DA \to Y$ (i.e. $f \in \ho \Lim_i X(D^iA)$) from  stacky CDGAs $A$,
  of the complexes
 \[
\hat{\Tot}(\cHom_R( f^*(\oR\phi_{1*}\sL_1) \ten \b A,f^*(\oR\phi_{2*}\sL_2))\brh, \delta + ((\Delta_{\sL_1})_{\Delta_{A}})^*\mp(\Delta_{\sL_2})_{\Delta_{A}^*})
\]
with notation for differentials as in the proofs of Propositions \ref{tenQprop}, and \ref{HomQprop}.
 
The associative composition law
 \begin{align*}
 &\oR\hat{\HHom}_{\tilde{\sO}_Y}(\oR \phi_{1*}\tilde{\sL}_1,\oR \phi_{2*}\tilde{\sL}_2)\ten_R \oR\hat{\HHom}_{\tilde{\sO}_Y}(\oR \phi_{2*}\tilde{\sL}_2,\oR \phi_{3*}\tilde{\sL}_3)\\
 &\to  \oR\hat{\HHom}_{\tilde{\sO}_Y}(\oR \phi_{1*}\tilde{\sL}_1,\oR \phi_{3*}\tilde{\sL}_3)
 \end{align*}
 arises naturally on 
rewriting each $\cHom_R(M\ten \b A,N)\brh$ symmetrically as the double complex $\cHom_{\b A}(M\ten \b A,N\ten \b A)\brh$ of $\b A$-colinear maps.
\end{definition}
  
\begin{remarks}\label{QCIredrmk}
 Similar reasoning to \cite[Proposition \ref{DQnonneg-Perprop}]{DQnonneg} shows that if we reduce modulo $\hbar$, the resulting $R$-linear dg category $Q\C\cI_{\dg}(\tilde{\sO}_Y)/\hbar$ is quasi-equivalent to a full dg subcategory of the derived category $\sD_{dg}(\sO_Y)$ of quasi-coherent complexes on $Y$. Its  objects are complexes of the form $\oR \phi_*\sL$, for line bundles $\sL$ on derived stacks over $Y$ for which the data $(\phi \co X \to Y, \sL)$ admits a quantised co-isotropic structure lifting $\tilde{\sO}_Y$.   

The hypothesis in Definition \ref{QCIdgdefglobal} that $\phi$ be representable is stronger than strictly necessary. All we really need is that $\oR \phi_*$ preserves quasi-coherence and commutes with derived base change.  With that modification, the representability hypothesis on the correspondence in  Proposition \ref{Lagcorrprop} below can be relaxed accordingly.
\end{remarks}

\begin{definition}\label{fukayadef}
Fix a  non-degenerate involutive quantisation $\tilde{\O}_{Y} \in Q\cP(Y,0)^{\nondeg,sd}$ quantising a symplectic structure $\omega \in \H^2F^2\DR(Y/R)$, and assume that $\tilde{\O}_{Y}$ is $w$-compatible with $\omega \cdot a$ for some $w \in  \Levi_{\GT}^t(\Q)$ and $a \in \H^0\DR(Y/R)\brhh$.

Now define the dg category $\cF(\tilde{\O}_{Y})$ to be the full subcategory of  $Q\C\cI_{\dg}(\tilde{\sO}_Y)$ consisting of  self-dual 
quantised Lagrangian structures $\tilde{\sL}$ on line bundles  $\sL$  with a given right $\sD$-module structure on $\sL^{\ten 2}$. 
\end{definition}

\begin{remarks}
The self-duality hypotheses and  the condition that $\tilde{\O}_{Y}$ is $w$-compatible with $\omega \cdot a$ for some $w \in  \Levi_{\GT}^t(\Q)$ and $a \in \H^0\DR(Y/R)\brhh$ ensure via Corollary \ref{quantcorsd} that $\cF(\tilde{\O}_{Y})$ has objects for  every self-dual line bundle $\sL$ on a Lagrangian $(X,\lambda)$ over $(Y, \omega)$. One such quantisation will correspond to the generalised Lagrangian $(\omega\cdot a, \lambda\cdot a)$.

Thus Corollary \ref{quantcorsd} plays an analogous role in our setting to that played classically by \cite{DAgnoloSchapira}, which shows that there exists a simple holonomic DQ-module supported on any smooth closed complex Lagrangian equipped with a square root of the dualising bundle. As explained in \cite[Remark 6.15]{BBDJS}, the  DQ-modules of  \cite{DAgnoloSchapira} are expected to provide the objects of the complex Fukaya category in the smooth underived setting.
\end{remarks}

Since we are permitting all derived Lagrangians to give rise to elements of $\cF(\tilde{\O}_{Y})$, we cannot expect all morphisms in our  dg category $\cF(\tilde{\O}_{Y})$ to be related to vanishing cycles as in the dg category conjectured in \cite{BBDJS,joyceFukaya}. However, when Grothendieck--Verdier duality applies (such as for finite virtually LCI morphisms $X_i \to Y$),   we will now relate the $\Hom$-complexes to $(-1)$-shifted quantisations, which in turn relate to vanishing cycles as in \cite{DQvanish}.

\begin{proposition}\label{cfvanish1}
In the setting of Definition \ref{QCIdgdefglobal},  if  $\phi_1$ is proper and virtually LCI of relative dimension $d$, 
then the complex 
$\oR\hat{\HHom}_{\tilde{\sO}_Y}(\oR \phi_{1*}\tilde{\sL}_1,\oR \phi_{2*}\tilde{\sL}_2)[d]$
 is given by derived global sections of an $E_0$-deformation quantisation (see \cite{DQvanish}) of the 
 line bundle
\[
 \oL\phi_2^*(\sL_1^{\phantom{1}-1}\ten^{\oL} \det \oL\Omega^1_{X_1/Y})\ten^{\oL} \oL\phi_1^*\sL_2
\]
 on the derived intersection $X_1\by^h_{Y}X_2$. 
 \end{proposition}
 \begin{proof}
By definition, $\oR\hat{\HHom}_{\tilde{\sO}_Y}(\oR \phi_{1*}\tilde{\sL}_1,\oR \phi_{2*}\tilde{\sL}_2)[d]$ is given by taking the homotopy limit over all 
homotopy formally \'etale stacky CDGAs $A$  over $Y$ (i.e. $f \co \Spec DA \to Y$) of  $\hat{\Tot}$-complexes of the double complexes
\[
C_A:= (\C\C_R(A, \cHom_R(f^*(\oR\phi_{1*}\sL_1),f^*(\oR\phi_{2*}\sL_2)))\brh,  \delta +((\Delta_{\sL_1})_{\Delta_{A}})^*\mp(\Delta_{\sL_2})_{\Delta_{A}^*}  ).
\]
We may rewrite 
\[
\C\C_R(A, \cHom_R(f^*(\oR\phi_{1*}\sL_1),f^*(\oR\phi_{2*}\sL_2)))\simeq \oR\phi_{2*}\C\C_R(\phi_2^{-1}A, \cHom_R(\phi_2^{-1}f^*\oR\phi_{1*}\sL_1,f^*\sL_2)),
 \]
 where we  use the same notation for a map  and its pullbacks. When $\phi_1$ is affine (or equivalently finite), the result now follows as a direct consequence of Proposition \ref{HomQprop} and Verdier duality, but we must work much harder in general.

If we consider the natural morphisms  
\begin{align*}
\oR\cHom_{\phi_2^{-1}A}(\phi_2^{-1}f^*\oR\phi_{1*}\sL_1,f^*\sL_2) &\to \C\C_R(\phi_2^{-1}A, \oR\cDiff_{\phi_2^{-1}A}(\phi_2^{-1}f^*\oR\phi_{1*}\sL_1,f^*\sL_2))\\ 
&\to  \C\C_R(\phi_2^{-1}A, \cHom_R(\phi_2^{-1}f^*\oR\phi_{1*}\sL_1,f^*\sL_2)),
\end{align*}
then the first is a levelwise quasi-isomorphism as in Remark \ref{extremecasesrmk} (replacing both $A$ and $B$ with $\phi_2^{-1}A$), and the composite is a levelwise quasi-isomorphism essentially by definition of the Hochschild complex. Thus the second map, coming from the inclusion of differential operators in $R$-linear maps, is also a levelwise quasi-isomorphism.
Since the operators $\Delta_{\sL_i}$ are differential operators, their action restricts to the second double complex, giving us a levelwise quasi-isomorphism
\[
 (\oR\phi_{2*}\C\C_R(\phi_2^{-1}A, \oR\cDiff_{\phi_2^{-1}A}(\phi_2^{-1}f^*\oR\phi_{1*}\sL_1,f^*\sL_2))\brh,  \delta +((\Delta_{\sL_1})_{\Delta_{A}})^*\mp(\Delta_{\sL_2})_{\Delta_{A}^*}  ) \to C_A.
\]

Now, 
\[
 \oR\cDiff_{\phi_2^{-1}A}(\phi_2^{-1}f^*\oR\phi_{1*}\sL_1,f^*\sL_2)\simeq \oR\cHom_{\phi_2^{-1}A}(\phi_2^{-1}f^*\oR\phi_{1*}\sL_1,\cDiff_{\phi_2^{-1}A}(\phi_2^{-1}A,f^*\sL_2)),
\]
 so
\begin{align*}
 &\oR\phi_{2*}\C\C_R(\phi_2^{-1}A, \oR\cDiff_{\phi_2^{-1}A}(\phi_2^{-1}f^*\oR\phi_{1*}\sL_1,f^*\sL_2))\simeq \\
 &\C\C_R(A, \oR\cHom_A(f^*\oR\phi_{1*}\sL_1,  \oR\phi_{2*}\cDiff_{\phi_2^{-1}A}(\phi_2^{-1}A,f^*\sL_2))).
\end{align*}
We can now use Grothendieck--Verdier duality to rewrite this as 
\begin{align*}
 &\C\C_R(A, \oR \phi_{1*}\oR\cHom_{f^*\sO_{X_1}}(f^*\sL_1,  \phi_1^!\oR\phi_{2*}\cDiff_{\phi_2^{-1}A}(\phi_2^{-1}A,f^*\sL_2))) \simeq\\ &\C\C_R(A, \oR \phi_{1*}(\phi_1^!\oR\phi_{2*}\cDiff_{\phi_2^{-1}A}(\phi_2^{-1}A,f^*\sL_2)\ten_{\sO_{X_1}}^{\oL} \sL_1^{-1}));
\end{align*}
importantly for us, $\phi^!$ preserves right $\sD$-module structures via the constructions of \cite{GaitsgoryRozenblyumCrystal} and \cite[Example \ref{DQvanish-stratDAex}]{DQvanish}, so our coefficients of $\C\C_R(A,-)$ above are a right $\sD_{X_1}(\sL_1)$-module, as well as inheriting a left $\sD_{X_2}(\sL_2)$-module structure from $ \cDiff_{\phi_2^{-1}\sO_Y}(\phi_2^{-1}\sO_Y,\sL_2)$. Thus the operator $\Delta_{\sL_1}$ acts on this on the right, while $\Delta_{\sL_2}$ acts on the left; this step is our reason for having to introduce $\cDiff$ in the proof. 

Writing $\psi \co X_1\by^h_YX_2 \to Y$ for the canonical map, we may rearrange this double complex to write $C_A$ as  
\[
(\oR\psi_*\C\C_R(\psi^{-1}A, (\phi_1^!\cDiff_{\phi_2^{-1}A}(\phi_2^{-1}A,f^*\sL_2)\ten_{\phi_2^{-1}\sO_{X_1}} \phi_2^{-1}\sL_1^{-1}))\brh,  \delta +((\Delta_{\sL_1})_{\Delta_{A}})^*\mp(\Delta_{\sL_2})_{\Delta_{A}^*}  ).
\]

The argument of Propositions \ref{tenQprop} and \ref{HomQprop} adapts to show that this deformation is given by differential operators of the correct orders, and it only remains to show that the complex
\[
 \CCC_R(\psi^{-1}\sO_Y, (\phi_1^!\cDiff_{\phi_2^{-1}\sO_Y}(\phi_2^{-1}\sO_Y,\sL_2)\ten_{\phi_2^{-1}\sO_{X_1}}^{\oL} \phi_2^{-1}\sL_1^{-1}))
\]
it deforms is quasi-isomorphic to a shift of a  line bundle on $X_1\by^h_YX_2 \to Y$. 
In order to do this, we reverse some of the equivalences above, using Remark \ref{extremecasesrmk} to replace differential operators with $R$-linear maps, giving quasi-isomorphisms
\begin{align*}
& \CCC_R(\psi^{-1}\sO_Y, (\phi_1^!\cDiff_{\phi_2^{-1}\sO_Y}(\phi_2^{-1}\sO_Y,\sL_2)\ten_{\phi_2^{-1}\sO_{X_1}} \phi_2^{-1}\sL_1^{-1}))\\
&\simeq 
\oR\hom_{\sO_{X_1\by^h_YX_2}}(\oL\phi_2^*\sL_1, \phi_1^!\sL_2)\\
&\simeq ( \oL\phi_2^*\sL_1^{-1} \ten^{\oL} \oL\phi_2^*\det \oL\Omega^1_{X_1/Y}\ten^{\oL}  \oL\phi_1^*\sL_2)[-d],
\end{align*}
so shifting by $d$ indeed gives us a deformation quantisation in
\[
 Q\cP(\oL\phi_2^*(\sL_1^{-1} \ten^{\oL} \det \oL\Omega^1_{X_1/Y})\ten^{\oL}  \oL\phi_1^*\sL_2,-1)
\]
with the desired global sections.
\end{proof}


We also have functoriality of the dg categories $Q\C\cI_{\dg}(\tilde{\sO}_Y)$  of quantised co-isotropic structures with respect to co-isotropic correspondences:

\begin{proposition}\label{Lagcorrprop}
 Assume we are given  quantisations $\tilde{\O}_{Y} \in Q\cP(Y,0)$ and  $\tilde{\O}_{Z}\in Q\cP(Y,0) $ of derived Artin $N$-stacks $Y,Z$, a  morphism $\psi \co T \to Y \by Z$ for  which $\psi_2 \co T \to Z$ is representable by quasi-compact quasi-separated derived algebraic spaces,  and a line bundle $\sM$ on $T$ with a quantised co-isotropic structure
$( \tilde{\O}_{Y}^{\op}\ten\tilde{\O}_{Z},\tilde{\sM}) \in  Q\cP(Y\by Z,\sM;0)$ lifting the quantisation $\tilde{\O}_{Y}^{\op}\ten\tilde{\O}_{Z}$ of $Y \by Z$.

Then there is a natural dg functor $Q\C\cI_{\dg}(\tilde{\sO}_Y) \to Q\C\cI_{\dg}(\tilde{\sO}_Z)$ between  the respective dg categories of quantised co-isotropic structures, which modulo $\hbar$ is quasi-equivalent  to the  dg  functor 
\[
 (X \xra{\phi} Y, \sL) \mapsto (X\by^h_YT, \oL\pr_2^*\sM\ten^{\oL} \oL\pr_1^*\sL ).  
\]
\end{proposition}
\begin{proof}
 On objects, the functor is given by applying the derived tensor product construction of Proposition \ref{tenQcorrprop}. In the Deligne--Mumford setting, that means we send a quantised co-isotropic structure $\tilde{\sL}$ on $X$ to the quantised line bundle $  (\pr_2^{-1}\tilde{\sM})\ten_{\chi^{-1}\tilde{\sO}_Y}^{\oL}(\pr_1^{-1}\tilde{\sL})$ on $X\by^h_YT$, for the natural map $\chi \co X\by^h_YT \to Y$. 
 
In order to consider morphisms,  observe that we can rewrite $\oR \chi_*( \oL\pr_2^*\sM\ten^{\oL} \oL\pr_1^*\sL)$ as $\oR \psi_{1*}(\sM \ten^{\oL} \oR \pr_{2*} \pr_1^*\sL) \simeq \oR \psi_{1*}(\sM \ten^{\oL}  \oL \psi_1^*\oR \phi_{*}\sL)$, 
where $\psi_1 \co T \to Y$ composes $\psi$ with the projection $Y \by Z \to Y$. That description allows us to substitute  into Definition \ref{QCIdgdefglobal} to pass from morphisms in  $Q\C\cI_{\dg}(\tilde{\sO}_Y)$ (defined in terms of $\oR \phi_*\sL$)  to morphisms in  $Q\C\cI_{\dg}(\tilde{\sO}_Z)$ (defined in terms of $\oR \chi_*( \oL\pr_2^*\sM\ten^{\oL} \oL\pr_1^*\sL)$).
\end{proof}

\begin{remark}
It is natural to ask whether there are conditions under which Proposition \ref{Lagcorrprop} restricts to give a dg functor $\cF(\tilde{\O}_{Y}) \to \cF(\tilde{\O}_{Z})$. A necessary condition is that the co-isotropic structure on $\psi \co T \to Y \by Z$ must be Lagrangian, since the correspondence must send quantised Lagrangians to quantised Lagrangians. Additional conditions will be required to ensure that the correspondence preserves self-duality. It seems plausible that self-duality of the given quantisation of $T$ suffices, but it is not clear that  self-duality interacts well with additivity statements such as Propositions \ref{tenQprop} and \ref{tenQcorrprop} (and indeed Proposition \ref{HomQprop}), although it seems likely. 
\end{remark}

\subsection{Uniqueness of quantisations for Lagrangians} 

The Fukaya category envisaged in \cite[\S 5.3]{BehrendFantechiIntersections} had an object for each  local system on a Lagrangian submanifold $L$. By contrast, the dg category conjectured in \cite[Remark 6.15]{BBDJS} only had one object for each square root of $K_L$. Our approach in Definition \ref{fukayadef} has an object for each self-dual quantisation of a square root of the dualising complex, making it  closest in flavour to the dg category of simple DQ modules supported on smooth Lagrangians constructed using \cite{kashiwaraschapira, DAgnoloSchapira,kashiwaraschapiraConstructibility}   and also described explicitly in \cite[Remark 6.15]{BBDJS}. 

While Corollary \ref{quantcorsd} ensured that  self-dual quantisations of square roots of the dualising complex always exist,  we now investigate how unique they are.

 Once we have fixed our quantisation $\tilde{\O}_{Y}$ in $Q\cP(Y,0)^{\nondeg,sd}$ and a compatible Lagrangian $(\omega,\lambda) \in \Lag(Y,X;0)$, the  homotopy fibre of 
\[
Q\cP(Y,\sL;0)^{ \nondeg,sd}\to \Lag(Y,X;0)\by^h_{\Sp(Y,0)}Q\cP(Y,0)^{\nondeg,sd}
\]
over $(\tilde{\O}_{Y}, \lambda)$ parametrises self-dual $\tilde{\O}_{Y}$-module quantisations of the line bundle $\sL$ on the Lagrangian $(X,\lambda)$, where $\Sp(Y,0)=\Lag(Y,\emptyset;0)$, the space of $0$-shifted symplectic structures on $Y$. We now explain how this homotopy fibre can be regarded as a torsor for the group of  self-dual rank $1$ local systems, so comes close to the intention of \cite{BehrendFantechiIntersections}.

By Theorem \ref{quantpropsd}, components  of the homotopy fibre are a torsor for the even de Rham power series
$\hbar^2\H^1\DR(X/R)\brhh$, 
although the parametrisation depends on an even associator $w \in \Levi_{\GT}^t$.

As in \cite[Remark \ref{DQvanish-rightDmodrmk}]{DQvanish}, quantisations $(\sL\brh, \delta + \Delta)$   of $\sL$ give rise to deformations  $\sE_{\hbar}:=(\sL\ten_{\sO_{X}}^{\oL}\sD_{X}\llbracket \hbar \rrbracket, \delta +\Delta\cdot\{-\})$ of $\sL\ten_{\sO_{X}}^{\oL}\sD_{X}$ as a right $\sD_{X}$-module. Other deformations of this form can be obtained by tensoring with deformations $\O'_{\hbar}$ of $\sO_{X}\brh $ as a left $\sD_{X}$-module, and indeed $\mmc(\hbar F^1\DR(X)\brh)$ is the space of such deformations. When $\sL^{\ten 2} = K_{X}$, the self-duality condition for $\sE_{\hbar}$ is
\[
 \sE_{-\hbar} \simeq \oR\hom_{\sD_{X}^{\op}\llbracket\hbar\rrbracket}(\sE_{\hbar},\sD_{X}\llbracket\hbar\rrbracket)\ten_{\sO_{X}} K_{X}
\]
as right $\sD_{X}\brh$-modules. The condition for $\O'_{\hbar}\ten\sE_{-\hbar}$ to also be self-dual is then 
\[
 \O'_{-\hbar} \simeq \oR\hom_{\sO_{X}\brh}(\O'_{\hbar}, \sO_{X}\brh)
\]
as left $\sD_{X}$-modules.

Over $\Cx$, the associated analytic $\sD_{X}^{\an}\brh$-modules  on the analytic space $X(\Cx)$  correspond to rank $1$ $\Cx\brh$-linear local systems $\vv$ equipped with an  inner product $ \vv\by \vv \to \Cx\brh$ which is sesquilinear in the sense that 
\[
\<a(\hbar)u,b(\hbar)v\> =a(\hbar)b(-\hbar)\<u,v\>, 
\]
for $a(\hbar), b(\hbar) \in \Cx\brh$;  equivalently, these correspond  to  $\exp(\hbar \Cx\brhh)$-torsors. 

In general, we have a similar statement in terms of classical algebraic $\sD$-modules on any formally smooth formal thickening $W$ of the underived truncation $\pi^0X \subset X$. Specifically, we may restrict attention to $\sO_W\brh$-modules $\sO'_{W,\hbar}$ with flat connection $\nabla$   deforming $\sO_W$, equipped with an $\sO_W\brh$-sesquilinear  inner product $\sO'_{W,\hbar} \by \sO'_{W,\hbar} \to \sO_W\brh $ satisfying $d\<u,v\>=\<\nabla u,v\>+\<u,\nabla v\>$.  Explicitly, such $\sD_W\brh$-modules take the form $(\sO_W\brh, d+ \omega)$, for $\omega \in \hbar\z^1\DR(W)\brhh$, so correspond to $\exp(\hbar \DR(\sO_W)\brhh)$-torsors. 

 We now show that the parametrisation in terms of de Rham cohomology corresponds to  the homotopy fibre above  being a torsor for this group of $\sD\brh$-modules. 
   
 In the following proposition, we denote by $\hat{B}_0$ and $\widehat{\DR(B_0)}$ the completions of $B_0$ and $\DR(B_0)$ over $\H_0B$.
 
\begin{proposition}\label{quanttorsorprop}
 Assume that the base ring $R$ is discrete (i.e. $R \cong \H_0R$) and Noetherian, with $A$ and $B$  cofibrant stacky $R$-CDGAs such that $B$ is of finite type. Then over points at which  the obstruction of Corollary \ref{quantcorsd} vanishes,
 the homotopy fibres of
\[
Q\cP(A,M;0)^{ \nondeg,sd}\to \Lag(A,B;0)\by^h_{\Sp(A,0)}Q\cP(A,0)^{\nondeg,sd}
\]
are torsors for the  $2$-group consisting of those  semilinearly self-dual strict line bundles $E$ on $\hat{B}_0\brh$ with flat connection  which are trivial modulo $\hbar$, i.e.  $E\ten_{\hat{B}_0\brh}\hat{B}_0 \cong \hat{B}_0$. 
The action on a quantisation $(\Delta_A, \Delta_M)$ is given  by tensoring the corresponding $(A,\Delta_A)- \sD_B$-bimodule $(M\ten _B\sD_B, \delta + \Delta_M\cdot -)$ on the left with the left $\sD$-module $E$. 
\end{proposition}
\begin{proof}
The tensor product of a left $\sD$-module and a right $\sD$-module is again a right $\sD$-module. The action is easiest to describe when the self-dual left $\sD_B$-module takes form $(B\brh, d+ \omega)$ for genuinely closed $1$-forms $\omega \in \hbar\ker(d \co \Omega^1_{B_0} \to \Omega^2_{B_0})\brhh$, in which case the right $\sD_B$-action is transformed as follows. First, form the derivation $\omega \lrcorner$ of $\sD_B$ determined by the property that it is given on tangent vectors by contraction --- the condition $d\omega=0$ ensures that this is well defined. For example, if $\omega =db$ we then have $db\lrcorner (\theta) =[b,\theta]$. The right action of a tangent vector $v$ on the tensor product is then given by $v - \omega \lrcorner v$, from which we conclude that an arbitrary element $\theta \in \sD$ acts as $\exp(-\omega \lrcorner)(\theta)$ on the tensor product, for the ring automorphism $\exp(-\omega \lrcorner)$ of $\sD$ given by exponentiating the   derivation $-\omega \lrcorner$, which is locally nilpotent because it reduces the order. 

Thus the $(A,\Delta_A)- \sD_B$-bimodule $ (B\brh, d+ \omega)\ten_B(M\ten _B\sD_B,  \delta +\Delta_M\cdot -) $ is just $ (M\ten _B\sD_B,  \delta + \Delta_M\cdot -)$ again, but with right $\sD$-module structure twisted by the automorphism $\exp(-\omega \lrcorner) $. By applying the inverse of that automorphism to elements of $\sD$, we see that this is isomorphic to $ (M\ten _B\sD_B, \delta+ \exp(\omega \lrcorner)(\Delta_M)\cdot -)$ with its natural right $\sD$-module structure, for $\exp(\omega \lrcorner) $ the automorphism of $\sD_B(M)$ defined by the same procedure as on $\sD_B$.

Now observe that as in \cite[Proposition \ref{stacks2-fthm}]{stacks2}, completion along $B_0\to \H_0B$ gives quasi-isomorphisms on everything in sight, and in particular the map $\sD(M) \to \cDiff_B(M,M\ten_{B_0}\hat{B}_0)$ of DGAAs is a filtered  quasi-isomorphism, so we can use the latter to define quantisations.

On our stacky CDGA $B$, semilinearly self-dual strict line bundles $E$ as above take the form $\hat{B}_0\brh_c$ as in Definition \ref{bistrictlb}, for $c \in \hbar \ker(\pd \co \hat{B}^1_0 \to \hat{B}^2_0)\brhh$, with connections of the  form $d+ \omega$ for $ \omega \in \hbar\ker(d \co \Omega^1_{\hat{B}_0^0} \to \Omega^2_{\hat{B}_0^0})\brhh$ and $\pd \omega =dc \in (\Omega^1_{\hat{B}_0})^1$. On tensoring, the effect of this on a quantisation $\Delta $ is thus to send $\Delta$ to $c+  \exp(\omega \lrcorner)(\Delta)$. 
Also note that replacing $(\omega,c)$ with $(\omega +db, \pd c)$ for $b\in\hbar \hat{B}^0_0\brhh$ has the same effect on this quantisation as conjugation by $\exp(b)\in \hat{B}^0_0\brh$.

For such line bundles, we can thus characterise this tensor action  as the inverse limit of a system of morphisms
\[
 \mmc(\hat{\Tot} \hbar \widehat{\DR(B_0)}[\hbar^2]/\hbar^{2i} )
 \curvearrowright
 \mmc(\tilde{F}^2Q\widehat{\Pol}(A,M;0)^{sd}/G^{2i+1})
\]
of Maurer--Cartan spaces. 
  At the $i$th level, we can consider the action of terms of the form $\hbar^{2i-1}(\omega,c)$, and we see that this sends a quantisation $\Delta$ of $\pi$ to $\Delta + \hbar^{2i-1}(c+ \omega \lrcorner \pi)$ modulo $G^{2i+1}$. In other words, the fibres of $\mmc(\hat{\Tot} \hbar \widehat{\DR(B_0)}[\hbar^2]/\hbar^{2i} )\to \mmc(\hat{\Tot} \hbar \widehat{\DR(B_0)}[\hbar^2]/\hbar^{2i-2} ) $ act on the fibres of $\mmc(\tilde{F}^2Q\widehat{\Pol}(A,M;0)^{sd}/G^{2i+1}) \to \mmc(\tilde{F}^2Q\widehat{\Pol}(A,M;0)^{sd}/G^{2i-1}) $ by addition via the unquantised compatibility map
\[
 \hbar^{-1}\mu(-,\pi)\co  \hbar^{2i-1}\widehat{\DR(B_0)}\to \hbar^{2i-1}T_{(\varpi,\pi)} \widehat{\Pol}(A,M;0).
\]

Since de Rham cohomology only detects reduced structure, we know that $\widehat{\DR(B_0)}$ is quasi-isomorphic to $\DR(B)$ (cf. \cite{FeiginTsygan}), and non-degeneracy of $\pi$ thus implies that 
the compatibility map
\[
 \mu(-,\pi)\co \widehat{\DR(B_0)} \to  \ker(T_{(\varpi,\pi)} \widehat{\Pol}(A,M;0) \to T_{\varpi} \widehat{\Pol}(A,0))
\]
is a quasi-isomorphism, as in the proof of Proposition \ref{QcompatP1}. It thus follows inductively that taking the action on  any chosen point of $Q\cP(A,M;0)^{ \nondeg,sd}$   gives a weak equivalence  from $\mmc(\hat{\Tot} \hbar \widehat{\DR(B_0)}\brhh )$ to the whole  homotopy fibre over $ \Lag(A,B;0)\by^h_{\Sp(A,0)}Q\cP(A,0)^{\nondeg,sd}$.
\end{proof}

\begin{remarks}\label{torsorsdrmks}
By applying descent arguments as in \S \ref{lbsn}, Proposition \ref{quanttorsorprop} gives a characterisation of
the homotopy fibres of
\[
Q\cP(Y,\sL;0)^{ \nondeg,sd}\to \Lag(Y,X;0)\by^h_{\Sp(X,0)}Q\cP(X,0)^{\nondeg,sd}
\]
in terms of line bundles with connection on any formally smooth formal thickening of the underived truncation $\pi^0X$ of the derived Artin stack $X$.

Over $\Cx$, Proposition \ref{quanttorsorprop} amounts to saying that the non-empty homotopy fibres are  torsors for the $2$-group of $\exp(\hbar \Cx\brhh)$-torsors on the  analytic site of $X(\Cx)$. This follows from the correspondences between torsors and local systems, and between local systems and  $\sD$-modules, with direct correspondence between algebraic and analytic $\sD$-modules because the torsors are unipotent. The action of $\exp(\hbar \Cx\brhh)$-torsors on quantisations admits a  much simpler explicit  description than the action of $\sD$-modules, with  $\hbar \Cx\brhh$ simply acting on $\tilde{F}^2Q\widehat{\Pol}(\sL^{\an},-1)$ by addition.
\end{remarks}

We now look at the analogue of Proposition \ref{quanttorsorprop} in the case where the quantisations are not self-dual. In order to obtain a similar statement, we need to allow the underlying line bundle to vary slightly, so the result is phrased in terms of the 
total space $\oR (Q\cP(X,Y;0)^{ \nondeg}/^h\bG_m)$ of quantised Lagrangians from Definition \ref{quotientbyGmdef}, whose homotopy fibres over line  bundles $\sL \in \map(X, B\bG_m)$ are the spaces $Q\cP(Y,\sL;0)^{ \nondeg}$ of quantisations of line bundles, as in Definition \ref{QPdefglobal}. Observe that since co-isotropic structures are independent of the choice of line bundle, the $\bG_m$-action on the  first cotruncation is trivial, so 
\[
 \oR((Q\cP(X,Y;0)/G^1)^{ \nondeg}/^h\bG_m) \simeq \cP(X,Y;0) \by \map(X, B\bG_m).
\]

For economy of notation, we write $D_*F(A):= \ho \Lim_{i\in \Delta} F(D^iA)$ for stacky CDGAs $A$.

\begin{proposition}\label{quanttorsorpropnonsd}
 Assume that the base ring $R$ is discrete (i.e. $R \simeq \H_0R$) and Noetherian, with $A$ and $B$ are cofibrant stacky $R$-CDGAs such that $B$ is of finite type. Then the non-empty 
  homotopy fibres of the map 
\begin{align*}
&D_*(\oR Q\cP(A,B;0)^{ \nondeg}/^h\bG_m)\\
&\to \map(\Spec D\H_0B, B\bG_m)\by \Lag(A,B;0)\by^h_{\Sp(A,0)}Q\cP(A,0)^{\nondeg}
\end{align*}
are torsors for the  $2$-group consisting of those   strict line bundles $E$ on $\hat{B}_0\brh$ with flat connection  which are trivial over $\H_0B$ i.e.  $E\ten_{\hat{B}_0\brh}\H_0B \cong \H_0B$.
The action on a quantisation $(\Delta_A, \Delta_M)$ is given  by tensoring the corresponding $(A,\Delta_A)- \sD_B$-bimodule $(M\ten _B\sD_B, \delta + \Delta_M\cdot -)$ on the left with the left $\sD$-module $E$. 
\end{proposition}
\begin{proof}
 This proceeds in a similar fashion to Proposition \ref{quanttorsorprop}, as follows. The key observation to make is that for a strict line bundle $M$ on $B$, the homotopy fibre of
 \[
  D_*(\oR Q\cP(A,B;0)/^h\bG_m) \to \map(\Spec D\H_0B, B\bG_m)
 \]
over $\{M\ten_B\H_0B\}$ is given by applying the Maurer--Cartan functor $\mmc$ to the pro-nilpotent DGLA
\[
 \tilde{F}^2 Q\widehat{\Pol}(A,M;0) \oplus \hat{\Tot}\ker(B\to \H_0B) \subset \tilde{F}^1Q\widehat{\Pol}(A,M;0).
\]
This follows because, via completion and exponentiation, $\mmc(\hat{\Tot}\ker(B\to \H_0B) )$ is canonically equivalent to the homotopy kernel of $ \map(\Spec DB, B\bG_m)\to  \map(\Spec D\H_0B, B\bG_m) $, and the tangent of the conjugation action of $\bG_m(B)$ on $\sD_B$  corresponds to taking the Lie bracket as differential operators. For a closely related construction, see the twisted quantisations of \cite[Definition \ref{DQvanish-Qpoissdef}]{DQvanish}.

The question thus reduces to understanding the morphism
\[
 \tilde{F}^2 Q\widehat{\Pol}(A,M;0) \oplus \hat{\Tot}\ker(B\to \H_0B) \to F^2\widehat{\Pol}(A,B;0)\by_{F^2\widehat{\Pol}(A,0)}\tilde{F}^2Q\widehat{\Pol}(A,0)
\]
of DGLAs, or rather of the associated Maurer--Cartan spaces.
 
 On our stacky CDGA $B$, since $\hat{B}_0 \to \H_0B$ is a pro-nilpotent extension,  strict line bundles $E$  as above take the form $\hat{B}_0\brh_c$, with $c \in \hat{B}^1_0\brh$ in the kernel of $\hat{B}^1_0\brh \to \H_0B$, such that $\pd c =0\in \hat{B}^2_0\brh$. Flat connections for such bundles  then take the form $d+ \omega$ for $ \omega \in \ker(d 
 \co \Omega^1_{\hat{B}_0^0} \to \Omega^2_{\hat{B}_0^0})\brhh$ with  $\pd \omega =dc\in (\Omega^1_{\hat{B}_0})^1$. 

 On tensoring, the effect of this on a twisted quantisation $\Delta $ is to send $\Delta$ to $c+  \exp(\omega \lrcorner)(\Delta)$, defined as in Proposition \ref{quanttorsorprop}. This action by the space of line bundles is thus giving us a system of group actions
 \[
 \mmc(\hat{\Tot} \ker(\widehat{\DR(B_0)}[\hbar]/\hbar^{i} \to \H_0B) )
 \curvearrowright
 \mmc(  (\hat{\Tot}\ker(B\to \H_0B)\oplus\tilde{F}^2Q\widehat{\Pol}(A,M;0))/G^{i}).
\]
On the fibres at the $i$th level, this map is just truncation of 
the unquantised compatibility map
\[
 \hbar^{-1}\mu(-,\pi)\co  \hbar^{i-1}\widehat{\DR(B_0)}\to \hbar^{i-1}T_{(\varpi,\pi)} \widehat{\Pol}(A,M;0)
\]
for $i>1$ and
\[
 \hbar^{-1}\mu(-,\pi)\co  \ker(\widehat{\DR(B_0)}\to \H_0B) \to \ker(T_{(\varpi,\pi)} \widehat{\Pol}(A,M;0)\to \H_0B)
\]
for $i=1$. The conclusion now follows exactly as for Proposition \ref{quanttorsorprop}.
\end{proof}

\begin{remark}\label{torsornonsdrmks}
By applying descent arguments as in \S \ref{lbsn}, Proposition \ref{quanttorsorpropnonsd} gives a characterisation of
the homotopy fibres of
\[
\oR(Q\cP(Y,\sL;0)^{ \nondeg}/^h\bG_m)\to \map(\pi^0X,B\bG_m) \by \Lag(Y,X;0)\by^h_{\Sp(X,0)}Q\cP(X,0)^{\nondeg}
\]
in terms of line bundles with connection on any formally smooth formal thickening of the underived truncation $\pi^0X$ of the derived Artin stack $X$.

\end{remark}

\subsection{Morphisms in terms of vanishing cycles} 
The complex Fukaya category envisaged in \cite{joyceFukaya} had complexes of morphisms coming from shifts of the perverse sheaf of vanishing cycles, but the required composition law was purely conjectural. Our construction in Definition \ref{fukayadef} is manifestly a dg category, and we will now show that on inverting $\hbar$, its resulting $\Hom$-complexes indeed come from sheaves of vanishing cycles, so the $R((\hbar))$-linear dg category $\cF(\tilde{\O}_{Y})[\hbar^{-1}]$ has all the expected properties.  

\begin{corollary}\label{cfvanish2}
If  $\phi_1$ is proper and virtually LCI of relative dimension $d$, 
 then the complex  $\oR\hat{\HHom}_{\tilde{\sO}_Y}(\oR \phi_{1*}\tilde{\sL}_1,\oR \phi_{2*}\tilde{\sL}_2)$ from Definition \ref{QCIdgdefglobal}
 is given by derived global sections of a complex $\bH$ of  sheaves on the homotopy fibre product $X_1\by_{Y}^hX_2$. 
 
If $R$ is discrete and Noetherian, then on any \'etale neighbourhood $U$  of $X_1\by^h_{Y}X_2$ which is equivalent as a $(-1)$-shifted  symplectic stack to the derived critical locus of a function $f \co Z \to \bA^1$ on a smooth DM $\infty$-stack $Z$ over $R$, we have
 \[
\oR \Gamma(U, \bH) \simeq \oR\Gamma(Z, (\Omega^*_{\hat{Z}}\ten_{\sO_{\hat{Z}}}\sM, \hbar \nabla + df \wedge -))[\dim Z -d] 
 \]
for $\hat{Z}:=\hat{Z}_{\pi^0U}$ the formal completion of $Z$ along the critical locus $\pi^0U$ (the underived truncation of $U$) and $\sM$ some    rank $1$ $\sO_{\hat{Z}}\brh$-module whose restriction to $\pi^0U$ is trivial, equipped with a flat connection $\nabla$.

If $R=\Cx$, then for the  rank $1$ local system $\vv$ of $\Cx\brh$-modules on the analytic site of $U(\Cx)$  given by  horizontal sections of $\nabla$ in $\sM^{\an}$, we have
\[
 \oR \Gamma(U,\bH[\hbar^{-1}]) \simeq \oR\Gamma(U(\Cx)^{\an}, \bigoplus_{c \in \Cx} \phi_{f-c} [\dim Z -d-1]\ten_{\Cx} \vv[\hbar^{-1}]),
\]
for 
 $\phi$ the vanishing cycles  
complex. 
  \end{corollary}
\begin{proof}
By Proposition \ref{cfvanish1}, the complex $\bH[d]$ is given by  quantisation of a line bundle on the $(-1)$-shifted symplectic derived stack  $X_1\by_{Y}^hX_2$. 

On the neighbourhood $U$, we know from \cite[Lemma \ref{DQvanish-PVlemma}]{DQvanish} that the  twisted de Rham complex 
\[
(\Omega^*_{Z} , \hbar d + df \wedge -)[\dim Z]
\]
 is an element of $Q\cP(\Omega^d_Z\ten_{\sO_Z}\sO_U ,-1)^{sd}_{\lambda_f}$ for the canonical $(-1)$-shifted symplectic structure $\lambda_f$.

Proposition \ref{quanttorsorpropnonsd} applied to the $0$-shifted Lagrangian $ U \to \Spec R$   thus implies that on $U$, our quantisation is given by the tensor product of the twisted de Rham complex and a  rank $1$ $\sO_{\hat{Z}}\brh$-module $\sM$ with flat connection $\nabla$ as above, giving rise to the first desired expression by applying  the formulae in the proof of Proposition \ref{quanttorsorprop} to the second order differential operator $\hbar d$. 

On inverting $\hbar$, the twisted de Rham complex becomes a vanishing cycles complex, as in \cite[Theorem 1.1]{SabbahTwistedII} (see also \cite[Proposition \ref{DQvanish-PVprop}]{DQvanish}), yielding the second expression.
 \end{proof}

\begin{remarks}
When  $X_1\by^h_{Y}X_2$ is a derived DM $\infty$-stack, note that since it is $(-1)$-shifted symplectic, the 
results of \cite{BBBJdarboux,BouazizGrojnowski} imply that it is covered by \'etale neighbourhoods of the form $U$ in Corollary \ref{cfvanish2}. 

As in Remark \ref{compattenQrmk}, it seems reasonable to expect that the quantisations of Proposition \ref{cfvanish1} are self-dual, in which case a stronger statement than Corollary \ref{cfvanish2} would hold, using Proposition \ref{quanttorsorprop} in place of Proposition \ref{quanttorsorpropnonsd} to conclude that  the $\sD\brh$-module $(\sM,\nabla)$ must be semi-linearly self-dual with respect to the involution $\hbar \mapsto -\hbar$. 

Over $\Cx$, one way to interpret this is that on inverting $\hbar$, non-degenerate self-dual $(-1)$-shifted quantisations give  a form of perverse sheaf over the $*$-algebra (i.e. ring with involution) $\Cx((\hbar))$, whereas \cite{BBDJS} constructed perverse sheaves of vanishing cycles over rings such as $\R$ or $\Cx$. The choice of orientation in \cite{BBDJS} could be regarded as a torsor for the group $\{\pm 1\} = \{a \in \Cx^{\by}~:~ a=a^{-1}\}$, while the self-dual quantisations of Proposition \ref{quanttorsorprop} depend on a choice of torsor for the group $\pm \exp(\hbar\Cx\brh)= \{a \in \Cx((\hbar))^{\by}~:~ a(-\hbar)=a(\hbar)^{-1}\}$.

The vanishing cycles sheaf  from \cite{BBDJS} was constructed by discarding much of the derived structure, while our constructions here and in \cite{DQvanish} depend on the full derived structure. We expect that the $R\brh$-linear dg category $\cF(\tilde{\O}_{Y})$ depends on the derived structure in an essential way, and that the same is probably true of any variant such as that envisaged in \cite{joyceFukaya}. 
\end{remarks}

\bibliographystyle{alphanum}
\bibliography{references.bib}
\end{document}